\newcommand{\qtq}[1]{\quad\text{#1}\quad}
\newcommand\R{\mathbb{R}}
\newcommand\C{\mathbb{C}}
\newcommand\Z{\mathbb{Z}}
\newcommand{\pa}{\partial}
\newcommand\LL{\mathcal{L}}
\newcommand\A{\bf A}
\numberwithin{equation}{section}
\newtheorem{proposition}{Proposition}[section]
\newtheorem{definition}{Definition}[section]
\newtheorem{lemma}{Lemma}[section]
\newtheorem{theorem}{Theorem}[section]
\newtheorem{remark}{Remark}[section]
\begin{document}
\title[Dispersive estimates for critical waves estimates]{Dispersive estimates for 2D-wave equations with critical potentials}

\author{Luca Fanelli}
\address{Ikerbasque and Universidad del Pa\'is Vasco, Departamento de Matem\'aticas, Bilbao, Spain}
\email{fanelli@mat.uniroma1.it}

\author{Junyong Zhang}
\address{Department of Mathematics, Beijing Institute of Technology, Beijing 100081; Department of Mathematics, Cardiff University, UK}
\email{zhang\_junyong@bit.edu.cn; ZhangJ107@cardiff.ac.uk}

\author{Jiqiang Zheng}
\address{Institute of Applied Physics and Computational Mathematics, Beijing 100088}
\email{zhengjiqiang@gmail.com; zheng\_jiqiang@iapcm.ac.cn}

\maketitle


\begin{abstract}
We study the 2D-wave equation with a scaling-critical electromagnetic potential. This problem is doubly critical, because of the scaling invariance of the model and the singularities of the potentials, which are not locally integrable. In particular, the diamagnetic phenomenon allows to consider negative electric potential which can be singular in the same fashion as the inverse-square potential. We prove sharp time-decay estimates in the purely magnetic case, and Strichartz estimates for the complete model, involving a critical electromagnetic field.
\end{abstract}

\begin{center}
 \begin{minipage}{120mm}
   { \small {\bf Key Words:  Decay estimates, Strichartz estimates, Aharonov-Bohm magnetic field, scaling-critical electromagnetic potential, wave equation}
      {}
   }\\
    { \small {\bf AMS Classification:}
      { 42B37, 35Q40.}
      }
 \end{minipage}
 \end{center}

\maketitle 

\section{Introduction}\label{sec:intro}

Let us consider the following initial-value problem for the wave equation
 \begin{align} \label{wave}
\begin{cases}    \partial_{tt}u+\LL_{{\A},a} u= 0,\quad
(t,x)\in\mathbb{R}\times\mathbb{R}^2,
\\
u(0,x)=f(x),\quad \partial_t u(0,x)=g(x).
\end{cases}
\end{align}
Here, the operator $\LL_{{\A},a}$ is defined by
\begin{equation}\label{LAa}
\mathcal{L}_{{\A},a}=\Big(i\nabla+\frac{{\A}(\hat{x})}{|x|}\Big)^2+\frac{a(\hat{x})}{|x|^2},
\end{equation}
where $\hat{x}=\tfrac{x}{|x|}\in\mathbb{S}^1$, $a\in L^{\infty}(\mathbb{S}^{1},\mathbb{R})$ and ${\A}\in W^{1,\infty}(\mathbb{S}^1;\R^2)$  satisfies the transversality condition
\begin{equation}\label{eq:transversal}
{\A}(\hat{x})\cdot\hat{x}=0,
\qquad
\text{for all }x\in\R^2.
\end{equation}
Our two main examples are the following:
\begin{itemize}
\item
the {\it Aharonov-Bohm} potential
\begin{equation}\label{ab-potential}
a\equiv0,
\qquad
{\A}(\hat{x})=\alpha\Big(-\frac{x_2}{|x|},\frac{x_1}{|x|}\Big),\quad \alpha\in\R,
\end{equation}
introduced in \cite{AB}, in the context of Schr\"odinger dynamics, to show that scattering effects can even occur in regions in which the electromagnetic field is absent (see also \cite{PT89});
\item
the {\it inverse-square} potential
\begin{equation}\label{eq:inversesquare}
{\A}\equiv0,
\qquad
a(\hat{x})\equiv a>0.
\end{equation}
\end{itemize}
Throughout this paper, we will always assume that
\begin{equation}\label{equ:condassa}
  \|a_-\|_{L^\infty(\mathbb{S}^1)}<\min_{k\in\Z}\{|k-\Phi_{\A}|\}^2,
  \qquad
  \Phi_{\A}\notin\Z,
\end{equation}
where $a_-:=\max\{0,-a\}$ is the negative part of $a$, and $\Phi_{\A}$ is the total flux along the sphere
\begin{equation}\label{equ:defphia1}
  \Phi_{\A}=\frac{1}{2\pi}\int_0^{2\pi}A(\theta)\;d\theta,
\end{equation}
where $A(\theta)$ is defined by \eqref{equ:Athetadef} below. Indeed, thanks to the Hardy inequality
\begin{equation}\label{equ:ghardyLW}
  \min_{k\in\Z}\{|k-\Phi_{\A}|\}^2\int_{\R^2}\frac{|f|^2}{|x|^2}\;dx\leq \int_{\R^2}|\nabla_{\A}f|^2\;dx,
  \qquad
  \nabla_{{\A}}:=i\nabla+{\A},
\end{equation}
(see \cite{LW}, and \cite[cf. (27)]{FFT}), thanks to assumption \eqref{equ:condassa} the Hamiltonian $\mathcal{L}_{{\A},a}$ can be defined as a self-adjoint operator on $L^2$, via Friedrichs' Extension Theorem (see e.g. \cite[Thm. VI.2.1]{K} and \cite[X.3]{RS}), on the natural form domain, which in 2D turns out to be equivalent to
$$
\mathcal D(\mathcal L_{\A,a})\simeq H^1_{{\A},a}:=\left\{f\in L^2(\R^2;\C):\int_{\R^2}\left(\left|\nabla_{{\A}} f\right|^2+\frac{|f(x)|^2}{|x|^2}|a(\hat x)|\right)\,dx<+\infty\right\}.
$$
We stress that in 2D, we have $H_{{\A},a}^1(\R^2)\subset H^1(\R^2)$, and the inclusion is strict, because of the non integrable singularities of the potentials (see \cite[cf. Lemma 23 - (ii)]{FFT} for details).
Therefore, the Spectral Theorem allows us to consider the dispersive propagators $e^{it\sqrt{\mathcal L_{{\A},a}}}, \cos(t\sqrt{\mathcal L_{{\A},a}}), \frac{\sin(t\sqrt{\mathcal L_{{\A},a}})}{\sqrt{\mathcal L_{{\A},a}}}$, as one-parameter groups of operators on $L^2$. In particular, the unique solution to \eqref{wave} can be represented by
\begin{equation}\label{eq:solution}
u(t,\cdot)=\cos(t\sqrt{\mathcal L_{{\A},a}})f(\cdot)+\frac{\sin(t\sqrt{\mathcal L_{{\A},a}})}{(\sqrt{\mathcal L_{{\A},a}})}g(\cdot).
\end{equation}
One of the main mathematical features of the wave equation \eqref{wave} is the scaling invariance, namely
$$
u_\lambda(t,x):=\lambda^{2}u\big(\tfrac t\lambda,\tfrac x\lambda\big)
\qquad
\Rightarrow
\qquad
(\partial_{tt}+\mathcal L_{{\A,}a})u_\lambda(t,x)= (\partial_{tt}u+\mathcal L_{{\A,}a}u)\big(\tfrac t\lambda,\tfrac x\lambda\big).
$$
This makes the dispersive evolution in \eqref{wave} critical with respect to a large class of dispersive estimates, as e.g. time-decay, Strichartz and local smoothing. The validity of such properties has been object of deep investigation in the last decades, due to their relevance in the description of linear and nonlinear dynamics. We now briefly sketch the state of the art about these problems.\vspace{0.2cm}

\noindent
{\bf Purely electric case} ${\A}\equiv0$. The first available results are due to Burq, Planchon, Stalker, and Tahvildar-Zadeh in \cite{BPSS, BPST}, in which they proved the validity of Strichartz estimates for the Schr\"odinger and wave equations, in space dimension $n\geq2$.
Assumption \eqref{equ:condassa} is replaced by the natural one which involves the usual Hardy inequality. For the inverse-square potential $a(\hat{x})\equiv a\in\R$ it reads, in dimension $n\geq3$, as $a>-(n-2)^2/4$, while $a\geq0$ is needed in dimension $n=2$, due to the lack of Hardy inequality. More recently, Mizutani treated in \cite{M} the analog problem for Schr\"odinger for the critical inverse-square $a=-(n-2)^2/4$, in dimension $n\geq3$. Later, Fanelli, Felli, Fontelos, and Primo proved in \cite{FFFP, FFFP1} investigated the validity of the time-decay estimate for the Schr\"odinger evolution, and proved that it holds, in some specific cases, including the inverse square potential. A quite interesting remark in \cite{FFFP, FFFP1} is that the usual time-decay for the Schr\"odinger equation does not hold in the range $-\tfrac{(n-2)^2}4<a<0$, while Strichartz estimates are true.

\noindent
{\bf Electromagnetic case.} If a magnetic field is present, the picture is more unclear. After a sequel of papers (see \cite{CS,DF, DFVV, EGS1, EGS2, S} and the references therein) in which time-decay or Strichartz estimates are studied, with subcritical magnetic potentials, in \cite{FFFP1}, the author noticed that the space dimension $n=2$ is very peculiar for this kind of problems. Indeed, from one side the critical potential ${\A}/|x|$ is not in $L^2_{loc}$, which means that the domain $H^1_{{\A},0}$ is strictly contained in $H^1$; from the other side, since the associated spherical problem is 1-dimensional, several explicit expansions are available, leading to quite complete results. For examples,  the time-decay estimate
\begin{equation}\label{eq:decayshro}
\|e^{it\LL_{{\A},a}}\|_{L^1(\R^2)\to L^\infty(\R^2)}\lesssim |t|^{-1}
\end{equation}
for the Schr\"odinger equation is proved in \cite{FFFP}, provided \eqref{eq:transversal} holds, and $a(\hat{x})>0$. This implies Strichartz estimates for $e^{it\LL_{{\A},a}}$, by the usual Keel-Tao argument \cite{KT}. We also mention that the behavior ${\A}\sim \/|x|$ is known to be critical for the validity of Strichartz estimates, as proved e.g. in \cite{FG} in the case of the Schr\"odinger equation.
\vspace{0.2cm}

\noindent
In view of the above comments, the aim of this paper is to prove time decay and Strichartz estimates for equation \eqref{wave}, in the same fashion as in \cite{FFFP} for the Schr\"odinger equation. In order to do this, let us introduce some preliminary notations. In the following, the Sobolev spaces will be denoted by
\begin{align}\label{def:sobolev}
& \dot H^{s}_{{\A},a}(\R^2):=\mathcal L_{{\A},a}^{-\frac s2}L^2(\R^2),
\qquad
\dot H^s(\R^2):=\dot H^s_{0,0}(\R^2),
\\
&
H^s_{{\A},a}(\R^2) :=L^2(\R^2)\cap\dot H^{s}_{{\A},a}(\R^2),
\qquad
H^s(\R^2):= H^s_{0,0}(\R^2).
\nonumber
\end{align}
Analogously, we define the distorted Besov spaces as follows.  Let $\varphi\in C_c^\infty(\mathbb{R}\setminus\{0\})$, with $0\leq\varphi\leq 1$, $\text{supp}\,\varphi\subset[1/2,1]$, and
\begin{equation}\label{dp}
\sum_{j\in\Z}\varphi(2^{-j}\lambda)=1.
\end{equation}

\begin{definition}[Magnetic Besov spaces associated with $\mathcal{L}_{{\A},0}$] For $s\in\R$ and $1\leq p,r\leq\infty$, the norm of $\|\cdot\|_{\dot{\mathcal{B}}^s_{p,r,\A}(\R^2)}$ is given by
\begin{equation}\label{Besov}
\|f\|_{\dot{\mathcal{B}}^s_{p,r,\A}(\R^2)}=\Big(\sum_{j\in\Z}2^{jsr}\|\varphi_j(\sqrt{\LL_{{\A},0}})f\|_{L^p(\R^2)}^r\Big)^{1/r}.
\end{equation}
In particular, for $p=r=2$, we have
\begin{equation}\label{Sobolev}
\|f\|_{\dot{H}^s_{{\A},0}(\R^2)}:=\big\|\mathcal L_{{\A},0}^{\frac s2}f\big\|_{L^2(\R^2)}=\Big\|\Big(\sum_{j\in\Z}2^{2js}|\varphi_j(\sqrt{\mathcal{L}_{{\A},0}})f|^2\Big)^{1/2}
\Big\|_{L^2(\R^2)}=\|f\|_{\dot{\mathcal{B}}^s_{2,2,\A}(\R^2)}.
\end{equation}
\end{definition}
\noindent
Our first result is concerned with the time decay property for the propagator $\frac{\sin(t\sqrt{\mathcal L_{{\A},0}})}{\sqrt{\mathcal L_{{\A},0}}}$.
\begin{theorem}\label{thm:dispersive} Let ${\A}\in W^{1,\infty}(\mathbb{S}^{1},\mathbb{R}^2)$, and assume \eqref{eq:transversal}.
There exists a constant $C>0$ such that, for any
\begin{equation}\label{dis-w}
\Big\|\frac{\sin(t\sqrt{\mathcal L_{{\A},0}})}{\sqrt{\mathcal L_{{\A},0}}}f\Big\|_{L^\infty(\R^2)}\leq C |t|^{-1/2}\|f\|_{\dot{\mathcal{B}}^{1/2}_{1,1,{\A}}(\R^2)}.
\end{equation}
\end{theorem}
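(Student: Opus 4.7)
The strategy is to reduce the full estimate to a single-frequency dispersive estimate via a Littlewood--Paley decomposition adapted to $\mathcal L_{{\A},0}$, together with the scaling invariance of the operator; prove the frequency-localized dispersive estimate by bounding the integral kernel of the frequency-cut wave propagator pointwise by $|t|^{-1/2}$; and finally sum the dyadic pieces using the definition of the Besov norm in \eqref{Besov}.

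\textbf{Reduction to $j=0$.} Write $f=\sum_{j\in\Z}\varphi_j(\sqrt{\mathcal L_{{\A},0}})f$. By the triangle inequality in $L^\infty$, it suffices to establish the single-scale bound
\[
\Big\|\frac{\sin(t\sqrt{\mathcal L_{{\A},0}})}{\sqrt{\mathcal L_{{\A},0}}}\,\varphi_j(\sqrt{\mathcal L_{{\A},0}})g\Big\|_{L^\infty(\R^2)}\lesssim |t|^{-1/2}\,2^{j/2}\,\|g\|_{L^1(\R^2)},
\]
because summing this estimate over $j$ reproduces exactly the $\dot{\mathcal B}^{1/2}_{1,1,{\A}}$ norm of $f$. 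Since $\A(\hat x)/|x|$ and $a(\hat x)/|x|^2$ are homogeneous of degrees $-1$ and $-2$ respectively, $\mathcal L_{{\A},0}$ is homogeneous of degree $-2$; by the spectral theorem, dilating $g$ by $2^{-j}$ (and $t$ by $2^{-j}$) maps $\varphi_j(\sqrt{\mathcal L_{{\A},0}})$ to $\varphi_0(\sqrt{\mathcal L_{{\A},0}})$ and tracks the correct powers of $2^j$ in the $L^1\to L^\infty$ norm. One is thereby reduced to the normalized estimate
\[
\Big\|\frac{\sin(t\sqrt{\mathcal L_{{\A},0}})}{\sqrt{\mathcal L_{{\A},0}}}\,\varphi_0(\sqrt{\mathcal L_{{\A},0}})\Big\|_{L^1\to L^\infty}\lesssim |t|^{-1/2}.
\]

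\textbf{Spectral kernel and pointwise bound.} The operator $\mathcal L_{{\A},0}$ separates in polar coordinates: expanding in a Fourier basis $\{e_k\}_{k\in\Z}$ on $\mathbb{S}^1$ that diagonalizes the associated angular operator, the radial part in each angular mode is a Bessel-type operator of order $\nu_k:=|k-\Phi_{\A}|$ (see \eqref{equ:condassa}--\eqref{equ:defphia1}), and its spectral resolution is given by the Hankel transform of order $\nu_k$. Consequently the Schwartz kernel of the frequency-localized wave propagator can be represented as
\[
K_t(x,y)=\sum_{k\in\Z}\overline{e_k(\hat x)}\,e_k(\hat y)\int_0^\infty \varphi_0(\lambda)\,\sin(t\lambda)\,J_{\nu_k}(\lambda|x|)\,J_{\nu_k}(\lambda|y|)\,d\lambda.
\]
The task reduces to showing $\sup_{x,y\in\R^2}|K_t(x,y)|\lesssim |t|^{-1/2}$ uniformly in $x,y$. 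For $|t|\lesssim 1$ the bound follows from the compact spectral support and $L^2$-boundedness; for $|t|\gg 1$, one splits each integral according to whether $\lambda|x|,\lambda|y|$ are small or large compared with $\nu_k$. In the large-argument regime one inserts the standard asymptotic $J_{\nu}(r)=\sqrt{2/(\pi r)}\cos(r-\nu\pi/2-\pi/4)+O(r^{-3/2})$ and applies stationary phase in $\lambda$ with phase $t\lambda\pm\lambda|x|\pm\lambda|y|$, producing the $|t|^{-1/2}$ gain; the prefactor $(\lambda|x|)^{-1/2}(\lambda|y|)^{-1/2}$ combines with the oscillatory integration to give a bound independent of $x,y$. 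In the small-argument regime the bound $|J_{\nu}(r)|\lesssim (r/2)^\nu/\Gamma(\nu+1)$ shows that modes with $\nu_k$ much larger than the support size contribute negligibly.

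\textbf{Main obstacle.} The delicate point is the uniform control of the sum over $k\in\Z$: the Bessel indices $\nu_k=|k-\Phi_{\A}|$ grow linearly with $|k|$, so for $\lambda\sim 1$ the transition regime $\lambda\max(|x|,|y|)\approx \nu_k$ (where neither the large-argument asymptotic nor the small-argument power series directly applies) has to be treated using uniform asymptotics of Bessel functions of Olver/Langer type, expressed via Airy functions. One must verify that the stationary-phase gain $|t|^{-1/2}$ holds with constants uniform in $k$, and that the sum $\sum_k$ can then be absorbed, taking advantage of the oscillation in $k$ coming from the phase $\nu_k\pi/2$ and of the effective finiteness of the range in which Bessel functions are of unit size. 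This is the analytic heart of the argument and is where the 2D, purely magnetic structure (Aharonov--Bohm type shift by $\Phi_{\A}\notin\Z$) plays the crucial role.
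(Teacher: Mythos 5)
Your reduction to the frequency-localized estimate (scaling down to $j=0$ and summing dyadic pieces) is the same first step the paper takes in Proposition \ref{prop:mdispersive}. However, your argument for the localized $L^1\to L^\infty$ bound is not complete, and the gap sits exactly at the point you yourself flag as ``the analytic heart.'' Writing the kernel as a sum over angular modes $k$ of Hankel integrals and then trying to run stationary phase mode by mode, in absolute value, cannot work directly: for fixed $\lambda\sim 1$ and fixed $|x|,|y|$, the transition regime $\lambda\max(|x|,|y|)\approx\nu_k$ and the small-argument regime involve all $k$, and after you take absolute values of the $k$-th term you have thrown away the cancellation in $k$ that you correctly identify as essential. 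You note that this cancellation ``must be exploited'' but you do not say how, and that is the whole problem. In effect, you would need a uniform stationary-phase/Airy analysis \emph{together with} an Abel-summation or Poisson-summation device in $k$, and carrying that out amounts to resumming the $k$-series into a closed form. The proposal does not do this, so as written it is a plan rather than a proof.

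For comparison, the paper avoids this altogether by first resumming the mode expansion exactly. Using the explicit eigenbasis \eqref{eig-f}, the Schl\"afli/Legendre representation \eqref{equ:kernelk} of the mode kernels, and the method of images on $\mathbb R/2\pi\Z$, Proposition \ref{prop:kernel} produces a \emph{closed-form} kernel $K(t,x,y)=G_w+D_w$: a geometric piece bounded by the free-wave kernel $(t^2-|x-y|^2)^{-1/2}$ and a diffractive piece bounded by $(t^2-(r_1+r_2)^2)^{-1/2}$ (Proposition \ref{prop:estimadterm}). The sum over $k$ is handled once and for all through Poisson summation, with the Aharonov--Bohm phase $\alpha=\Phi_{\A}$ entering as explicit multiplicative factors and the geometric-series identity $\sum_{k\ge1}e^{ikz}=e^{iz}/(1-e^{iz})$. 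Then the $|t|^{-1/2}$ decay is not obtained by stationary phase at all: the paper integrates $|f|$ against the kernel, splits off the thin shell near the light cone, and integrates by parts in the radial variable (the Shatah--Struwe trick), using the Bernstein bound \eqref{bern2} for $\nabla_{\A}\varphi_0(\sqrt{\LL_{{\A},0}})$ and the diamagnetic inequality to control $\|\nabla|f|\|_{L^1}$. This is where the frequency localization is actually used. If you want to pursue your route, you would essentially need to reproduce the resummation in Propositions \ref{prop:kernel}--\ref{prop:estimadterm}; the stationary-phase gain alone does not yield a bound uniform in the (unbounded) range of contributing $k$.

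One minor point: your treatment of $|t|\lesssim 1$ via ``compact spectral support and $L^2$-boundedness'' should instead invoke the Bernstein inequality \eqref{bern1} (an $L^1\to L^\infty$ bound) together with $|\sin(t\lambda)/\lambda|\lesssim\min(|t|,1)$ on the support of $\varphi_0$; $L^2$ boundedness does not give an $L^1\to L^\infty$ estimate.
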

\begin{remark}
Theorem \ref{thm:dispersive} is analog to the main result in \cite{FFFP} for the Schr\"odinger equation. In that case, a crucial role is played by the pseudoconformal invariance of the Schr\"odinger equation, which together with a suitable transformation permits to pass to a Hamiltonian with an explicit, purely discrete spectrum, obtaining a nice representation formula for the solution in the physical space. A similar argument is not available for the wave equation, for which the natural conformal invariance leads to a much more complicate picture.
For this reason, we use a different strategy to prove Theorem \ref{thm:dispersive}, which is based on a representation of the fundamental solution, inspired to \cite{BFM, CT1}. The difficult question about the validity of \eqref{dis-w} for $e^{it\sqrt{\mathcal L_{{\A},a}}}$, with $a\neq0$ remains open, although we conjecture the answer is positive, provided  $a\geq0$.
\end{remark}

\noindent
As a consequence of the previous result, we can now study the validity of Strichartz estimates for the electromagnetic propagator $e^{it\sqrt{\mathcal L_{{\A},a}}}$, treating it as a perturbation of the purely magnetic operator $e^{it\sqrt{\mathcal L_{{\A},0}}}$.
We say $(q,r)$ is a (2D)-\emph{wave-admissible pair}, if
\begin{equation}\label{adm-p-w}
(q,r)\in\Lambda_s^W:=\big\{(q,r)\in [2,\infty]\times[2,\infty), \; \tfrac2q+\tfrac1r\leq \tfrac12, \;s=2\big(\tfrac12-\tfrac1r\big)-\tfrac1q\big\},\;s\in\R.
\end{equation}
We remark that $0\leq s<1$, otherwise the set $\Lambda_s^W$ is empty (see Figure \ref{figure1}). \vspace{0.2cm}

It is well known by \cite{KT} that there exists a constant $C>0$ such that the solution to the free wave equation $
u(t,\cdot):=\cos(t\sqrt{-\Delta})f(\cdot)+\frac{\sin(t\sqrt{-\Delta})}{\sqrt{-\Delta}}g(\cdot)$ satisfies the Strichartz estimate
$$
\|u\|_{L^q_t(\R;L^r(\R^2))}\leq C \big(\|f\|_{\dot{H}^s(\R^2)}+\|g\|_{\dot{H}^{s-1}(\R^2)}\big),
$$
for any wave-admissible pair $(q,r)$.

\begin{center}
 \begin{tikzpicture}[scale=1]\label{figure1} 

\draw[->] (-0.2,0) -- (3.5,0) node[anchor=north] {$\frac{1}{q}$};
\draw[->] (0,-0.2) -- (0,3.5)  node[anchor=east] {$\frac{1}{r}$};

\draw (-0.1,0) node[anchor=north] {O}
       (2.5,0) node[anchor=north] {$\frac12$}
       (1.25,0) node[anchor=north] {$\frac14$}
       (2,2.5)  node[anchor=north] {$\frac2q+\frac1r=\frac12$}
       (2,1.6)  node[anchor=north] {$\frac1q+\frac2r=\frac12$};
\draw  (0, 2.5) node[anchor=east] {$\frac12$}
       (0,1.25) node[anchor=east] {$\frac14$};

\draw (0,2.5) -- (1.25,0)
      (0,1.25)-- (0.833,0.833);

\draw[dashed,thick] (0.833,0.833) -- (2.5,0);

\draw[<-] (0.5,1.6) -- (1,2) node[anchor=north]{$~$};
\draw[<-]  (0.5,1) -- (1,1.3) node[anchor=north]{$~$};
\path (2,-1) node(caption){Figure 1: wave admissible pair $\Lambda_s^W$};

\end{tikzpicture}
\end{center}
\noindent
We have the following result.

\begin{theorem}\label{thm:Stri}
Let $a\in L^\infty(\mathbb{S}^{1},\mathbb{R}), {\A}\in
W^{1,\infty}(\mathbb{S}^{1},\mathbb{R}^2)$, assume \eqref{eq:transversal} and \eqref{equ:condassa},
and let $u$ be as in \eqref{eq:solution}.
Then there exists a constant $C$ such that
\begin{equation}\label{Str-w}
\|u\|_{L^q_t(\R;L^r(\R^2))}\leq C \big(\|f\|_{\dot{H}_{{\A},0}^s(\R^2)}+\|g\|_{\dot{H}_{{\A},0}^{s-1}(\R^2)}\big),
\end{equation}
for any $s\in\R$ and $(q,r)\in\Lambda_s^W$ given by \eqref{adm-p-w}.

\end{theorem}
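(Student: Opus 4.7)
The strategy is to first establish \eqref{Str-w} in the purely magnetic case $a\equiv0$ via Theorem \ref{thm:dispersive} and the Keel--Tao machinery, and then to transfer the estimate to the full electromagnetic propagator through a Duhamel perturbation combined with Kato-type local smoothing for $\mathcal{L}_{\A,0}$.

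For the first step, I would dyadically decompose with the spectral projectors $\varphi_j(\sqrt{\mathcal{L}_{\A,0}})$. Restricting $f$ to the band $\lambda\sim 2^j$ reduces the Besov norm in Theorem \ref{thm:dispersive} to $2^{j/2}\|\varphi_j(\sqrt{\mathcal{L}_{\A,0}})f\|_{L^1}$, and an analogous $|t|^{-1/2}$ dispersive bound for the half-wave group $e^{\pm it\sqrt{\mathcal{L}_{\A,0}}}$ at frequency $2^j$ follows with an extra Bernstein factor. Inserting this together with $L^2$-conservation into the abstract Keel--Tao theorem (decay exponent $\sigma=1/2$) produces the frequency-localized Strichartz bound
$$\big\|\varphi_j(\sqrt{\mathcal{L}_{\A,0}})\,e^{\pm it\sqrt{\mathcal{L}_{\A,0}}} f\big\|_{L^q_tL^r_x}\lesssim 2^{js}\big\|\varphi_j(\sqrt{\mathcal{L}_{\A,0}}) f\big\|_{L^2(\R^2)}$$
for every $(q,r)\in\Lambda_s^W$ with $r<\infty$. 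Since $q,r\geq 2$, Minkowski's inequality combined with the square-function characterization \eqref{Sobolev} of $\dot H^s_{\A,0}$ promotes this dyadic bound to the global estimate \eqref{Str-w} with $\mathcal{L}_{\A,0}$ in place of $\mathcal{L}_{\A,a}$.

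Under \eqref{equ:condassa}, the Hardy inequality \eqref{equ:ghardyLW} ensures both that $\mathcal{L}_{\A,a}$ is positive self-adjoint and that the quadratic forms of $\mathcal{L}_{\A,a}$ and $\mathcal{L}_{\A,0}$ are comparable, so the Sobolev norms satisfy $\dot H^s_{\A,a}\simeq \dot H^s_{\A,0}$ on the relevant range $s\in[0,1)$, and it suffices to bound $\|u\|_{L^q_tL^r_x}$ in terms of $\|f\|_{\dot H^s_{\A,0}}+\|g\|_{\dot H^{s-1}_{\A,0}}$. Viewing \eqref{wave} as the inhomogeneous purely magnetic equation $\partial_{tt} u+\mathcal{L}_{\A,0} u=-\tfrac{a(\hat x)}{|x|^2} u$, Duhamel's principle yields
$$u(t)=\cos(t\sqrt{\mathcal{L}_{\A,0}}) f+\frac{\sin(t\sqrt{\mathcal{L}_{\A,0}})}{\sqrt{\mathcal{L}_{\A,0}}} g-\int_0^t\frac{\sin((t-s)\sqrt{\mathcal{L}_{\A,0}})}{\sqrt{\mathcal{L}_{\A,0}}}\,\frac{a(\hat x)}{|x|^2}\,u(s)\,ds.$$
The homogeneous terms are controlled by the first step. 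For the Duhamel integral I would invoke the weighted $L^2$ local smoothing estimate
$$\Big\||x|^{-1}\,\frac{\sin(t\sqrt{\mathcal{L}_{\A,0}})}{\sqrt{\mathcal{L}_{\A,0}}}\,h\Big\|_{L^2_{t,x}}+\big\||x|^{-1}\cos(t\sqrt{\mathcal{L}_{\A,0}}) h\big\|_{L^2_{t,x}}\lesssim \|h\|_{L^2(\R^2)},$$
which follows from the limiting absorption principle for $\mathcal{L}_{\A,0}$. Its dual inhomogeneous version, combined with the Christ--Kiselev lemma and the Strichartz bound from the first step, controls the Duhamel integral in $L^q_tL^r_x$ by $\|a\|_{L^\infty}\,\||x|^{-1} u\|_{L^2_{t,x}}$; applying the homogeneous smoothing estimate to $u$ itself then closes the inequality.

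The main obstacle is the Kato smoothing step: because $a(\hat x)/|x|^2$ is scale-critical and, as noted in the remark after Theorem \ref{thm:dispersive}, no dispersive estimate is currently available for $\mathcal{L}_{\A,a}$ itself, the only small parameter is the spectral gap forced by \eqref{equ:condassa}. Proving the weighted smoothing bound with sharp constants requires a careful spectral analysis of $\mathcal{L}_{\A,0}$ via separation of variables into the angular problem (whose eigenvalues encode the Aharonov--Bohm phase $\Phi_{\A}$) and the resulting Bessel-type expansion in the radial variable, after which the quantitative form of \eqref{equ:condassa} is leveraged to close the perturbative Duhamel loop.
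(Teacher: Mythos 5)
Your high-level architecture — frequency-localized dispersive bound $\Rightarrow$ Keel--Tao $\Rightarrow$ square-function summation for the purely magnetic case, then Duhamel plus local smoothing to absorb the electric perturbation — is exactly the paper's strategy. However, there are two genuine problems in the perturbative step.

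First, the weighted $L^2$ smoothing estimates you write down are inconsistent with scaling, hence false. If $h_\lambda(x)=h(x/\lambda)$, then $\big\||x|^{-1}\tfrac{\sin(t\sqrt{\LL_{\A,0}})}{\sqrt{\LL_{\A,0}}}h_\lambda\big\|_{L^2_{t,x}}$ scales like $\lambda^{3/2}$ while $\|h_\lambda\|_{L^2}$ scales like $\lambda$, so there is no constant making your inequality hold; the same mismatch occurs for the cosine version. The correct scale-invariant statement is the one actually proved in Proposition~\ref{prop:loc}: $\|r^{-\beta}e^{it\sqrt{\LL_{\A,a}}}f\|_{L^2_{t,x}}\lesssim\|f\|_{\dot H^{\beta-1/2}_{\A,a}}$ for $\beta\in(\tfrac12,1+\nu_0)$, so the $L^2$ norm on the right should be $\dot H^{\mp 1/2}_{\A,0}$ for the sine/cosine versions with weight $|x|^{-1}$. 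This also means the estimate is \emph{not} a bare consequence of the limiting absorption principle: the endpoint $\beta=\tfrac12$ with $L^2$ data fails even for the free wave, and the paper works strictly inside the interval $(\tfrac12,1+\nu_0)$, leveraging the positivity of $\nu_0$ from \eqref{equ:condassa}.

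Second, and more fundamentally, a single choice of weight $|x|^{-1}$ (i.e.\ $\beta=1$) only produces Strichartz at the single regularity $s=\tfrac12$. To cover all $s$ you must let $\beta$ range, and the constraints from applying Proposition~\ref{prop:loc} on both sides of the $TT^*$-loop (to $B$ with smoothness $\tfrac32-\beta$, and to $r^{\beta-2}u$, which needs $\tfrac12<2-\beta<1+\nu_0$) pin you to $\max\{\tfrac12,1-\nu_0\}<\beta<\tfrac32$, hence only $0<s<\min\{1,\tfrac12+\nu_0\}$. In the Aharonov--Bohm case one has $\nu_0=\min_{k\in\Z}|k-\Phi_\A|\le\tfrac12$, with equality only when $\Phi_\A$ is a half-integer, so the range $\tfrac12+\nu_0\le s<1$ is generically nonempty and is simply not reachable by the perturbative Duhamel argument. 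The paper closes this gap (its Case~2) by splitting the data into low and high angular modes $f=f_l+f_h$: the high modes have a larger effective $\nu$, so the smoothing interval widens and the same argument applies, while for the finitely many low modes one uses the Planchon--Stalker--Tahvildar-Zadeh result on $L^p_{r\,dr}$-boundedness of the Hankel composition $\mathcal H_\mu\mathcal H_\nu$ to reduce to the classical radial half-wave Strichartz estimate. This case distinction is a necessary ingredient that your proposal omits.
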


\begin{remark}\label{rem:1}
Theorem \ref{thm:Stri} is a completely new result for the wave equation with a critical magnetic field. We stress that a time-decay estimate as \eqref{eq:decayshro} is not available, in this setting. We find particularly interesting the inequality \eqref{Str-w}, in the case of a negative inverse-square electric potential $a(\hat{x})\equiv a$, with
\begin{equation}\label{eq:question}
-\min_{k\in\Z}\{|k-\Phi_{\A}|\}^2<a<0,
\end{equation}
for which the role played by the magnetic potential is crucial. In analogy with the case of the Schr\"odinger equation, time-decay should not hold in the range \eqref{eq:question}, but this is an open question.
\end{remark}

\begin{remark}\label{rem:2}
Related to the above remark, another interesting open question is concerned with the critical inverse square potential
$$
a(\hat{x})\equiv-\min_{k\in\Z}\{|k-\Phi_{\A}|\}^2.
$$
In this case, in analogy with \cite{M}, one may ask about the validity of \eqref{Str-w}, and in particular of the endpoint estimate.
\end{remark}

\begin{remark}\label{rem:3}
Notice that the magnetic Sobolev norms $\dot H^s_{{\A},0}$ at the right-hand side of \eqref{Str-w} cannot be replaced by the usual Sobolev norms $\dot H^s$, since $\dot H^s(\R^2)\setminus\dot H^s_{{\A},0}(\R^2)\neq\emptyset$, for critical magnetic potentials, hence the evolution cannot be well defined on $\dot H^s$. On the other hand, in dimension $n\geq3$, the spaces $\dot H^s_{{\A},0}$ and $\dot H^s$ are equivalent (see \cite[cf. Lemma 2.3 - (i)]{FFT} for details), therefore one can wonder whether Strichartz estimates like
\begin{equation}\label{Str-w2}
\|u\|_{L^q_t(\R;L^r(\R^n))}\leq C \big(\|f\|_{\dot{H}^s(\R^n)}+\|g\|_{\dot{H}^{s-1}(\R^n)}\big)
\end{equation}
hold, in dimension $n\geq3$, for $n$-wave admissible pairs  
\begin{equation}\label{eq:dwaveda}
(q,r)\in\Lambda_s^W:=\big\{2\leq q,r\leq\infty, r\neq\infty, \tfrac2q+\tfrac {n-1}r\leq \tfrac {n-1}2, \;s=n\big(\tfrac12-\tfrac1r\big)-\tfrac1q\big\},\;s\in\R.
\end{equation}
The validity of \eqref{eq:question} for ${\A}\neq0$ is a completely open problem, at our knowledge.
\end{remark}
\noindent
The proof of Theorem \ref{thm:Stri} is inspired to the perturbation arguments in \cite{BPST, BPSS}. Nevertheless, to treat $\mathcal L_{{\A},a}$ as a perturbation of $-\Delta$, as in that case, involving local smoothing estimates, would require an estimate like
$$
\big\| |x|^{-\frac12}e^{it\sqrt{\LL_{{\A},a}}}f\big\|_{L_{t,x}^2(\R\times\R^2)}\leq \|f\|_{L^2(\R^2)},
$$
to handle the first-order term coming from the magnetic potential. Unfortunately, this estimate is known to be false, even in the free case, by the standard Agmon-H\"ormander Theory. To overcome this difficulty, we treat $\mathcal L_{{\A},a}$ as an electric perturbation of the purely magnetic operator $\mathcal L_{{\A},0}$. \vspace{0.2cm}

{\bf Acknowledgments:}\quad  The authors would like to thank Federico Cacciafesta for his
helpful discussions.
J. Zhang was supported by National Natural
Science Foundation of China (11771041,11831004,11671033) and H2020-MSCA-IF-2017(790623). J. Zheng was partially supported by the NSFC
under grants  11831004 and 11901041.
\vspace{0.2cm}

\section{Analysis of the operator $\LL_{{\A},a}$}
In this section, we study the harmonic analytical features of the operator $\LL_{{\A},a}$, relying on  the spectral properties proved in \cite{FFFP}.\vspace{0.2cm}

First of all, by using polar coordinates, thanks to condition \eqref{eq:transversal} we can write
\begin{equation}\label{LAa-r}
\begin{split}
\mathcal{L}_{{\A},a}=-\partial_r^2-\frac{1}r\partial_r+\frac{L_{{\A},a}}{r^2},
\end{split}
\end{equation}
where
\begin{equation}\label{L-angle}
\begin{split}
L_{{\A},a}&=(i\nabla_{\mathbb{S}^{1}}+{\A}(\hat{x}))^2+a(\hat{x})
\\&=-\Delta_{\mathbb{S}^{1}}+\big(|{\A}(\hat{x})|^2+a(\hat{x})+i\,\mathrm{div}_{\mathbb{S}^{1}}{\A}(\hat{x})\big)+2i {\A}(\hat{x})\cdot\nabla_{\mathbb{S}^{1}}.
\end{split}
\end{equation}
Let $\hat{x}=(\cos\theta,\sin\theta)$: then
\begin{equation*}
\partial_\theta=-\hat{x}_2\partial_{\hat{x}_1}+\hat{x}_1\partial_{\hat{x}_2},\quad \partial_\theta^2=\Delta_{\mathbb{S}^{1}}.
\end{equation*}
We define $A(\theta):[0,2\pi]\to \R$ as follows
\begin{equation}\label{equ:Athetadef}
  A(\theta)={\bf A}(\cos\theta,\sin\theta)\cdot (-\sin\theta,\cos\theta).
\end{equation}
Hence by \eqref{eq:transversal} we can write
\begin{equation}
{\bf A}(\cos\theta,\sin\theta)=A(\theta)(-\sin\theta,\cos\theta),\quad \theta\in[0,2\pi],
\end{equation}
therefore we obtain
\begin{equation}\label{LAa-s}
\begin{split}
L_{{\A},a}&=-\Delta_{\mathbb{S}^{1}}+\big(|{\A}(\hat{x})|^2+a(\hat{x})+i\,\mathrm{div}_{\mathbb{S}^{1}}{\A}(\hat{x})\big)+2i {\A}(\hat{x})\cdot\nabla_{\mathbb{S}^{1}}\\
&=-\partial_\theta^2+\big(|{A}(\theta)|^2+a(\theta)+i\,{A'}(\theta)\big)+2i {A}(\theta)\partial_\theta\\
&=: L_{{A},a}.
\end{split}
\end{equation}

\subsection{Spectral properties of  $\LL_{{\A},a}$} In this subsection, we consider the perturbation from the magnetic potential $\A$ and electrical potential $a$.\vspace{0.2cm}

First of all, recall \eqref{LAa-s}. The operator $L_{{\A},a}$ on $L^2(\mathbb{S}^1)$ has a compact inverse, hence by  classical Spectral Theory, its spectrum is purely discrete, and it is made by a countable family of real eigenvalues with finite multiplicity. We denote them with $\{\mu_k({\A},a)\}_{k=1}^\infty$, enumerated  such that
\begin{equation}\label{eig-Aa}
\mu_1({\A},a)\leq \mu_2({\A},a)\leq \cdots
\end{equation}
and we repeat each eigenvalue as many times as its multiplicity, and $\lim\limits_{k\to\infty}\mu_k({\A},a)=+\infty$ (see \cite[Lemma A.5]{FFT} for further details).
\begin{remark}\label{rem:new}
We remark that
\begin{equation}\label{equ:mu1}
  \mu_1(A,0)=\min_{k\in\Z}\{|k-\Phi_{\A}|^2\}
\end{equation}
(see \cite[(28)]{FFT}).
Notice that assumption \eqref{equ:condassa} implies that $\mu_1({\A},a)>0$.
\end{remark}
For each $k\in\mathbb{N}, k\geq1$, let $\psi_k(\hat{x})\in L^2(\mathbb{S}^{1})$ be the normalized eigenfunction of the operator $L_{{\A},a}$ corresponding to the $k$-th eigenvalue $\mu_k({\A},a)$, i.e. satisfying that
\begin{equation}\label{equ:eig-Aa}
\begin{cases}
L_{{\A},a}\psi_k(\hat{x})=\mu_k({\A},a)\psi_k(\hat{x})\quad \text{on}\,\quad  \mathbb{S}^{1};\\
\int_{\mathbb{S}^{1}}|\psi_k(\hat{x})|^2 d\hat{x}=1.
\end{cases}\end{equation}

For $f\in L^2(\R^2)$, based on the $\{\psi_k(\theta)\}_{k=0}^\infty$ where $\psi_0(\theta)=1/\sqrt{2\pi}$ and $\psi_k(\theta)$ given in \eqref{equ:eig-Aa}, we write $f$ into the form of separating variables
\begin{equation}\label{sep.v}
f(x)
=\sum_{k=0}^\infty c_{k}(r)\psi_{k}(\theta)
\end{equation}
where
\begin{equation*}
 c_{k}(r)=\int_{0}^{2\pi}f(r,\theta)
\psi_{k}(\theta) d\theta,
\end{equation*}
then
\begin{equation}\label{norm1}
\|f(r,\theta)\|^2_{L^2_{\theta}([0,2\pi])}=\sum_{k=0}^\infty|c_{k}(r)|^2.
\end{equation}
From the fact that
\begin{equation}\label{operator-t}
\mathcal{L}_{{\A},a}=-\partial^2_r-\frac{1}r\partial_r+\frac{L_{{\A},a}}{r^2},
\end{equation}
then, on each space $\mathcal{H}^{k}=\text{span}\{\psi_k\}$, the action of the operator is given by
\begin{equation*}
\begin{split}
\LL_{{\A},a}=-\partial_r^2-\frac{1}r\partial_r+\frac{\mu_k}{r^2}.
\end{split}
\end{equation*}
Let $\nu=\nu_k=\sqrt{\mu_k}$,  for $f\in L^2(\R^2)$, we define the Hankel transform of order $\nu$
\begin{equation}\label{hankel}
(\mathcal{H}_{\nu}f)(\rho,\theta)=\int_0^\infty J_{\nu}(r\rho)f(r,\theta) \,rdr,
\end{equation}
where the Bessel function of order $\nu$ is given by
\begin{equation}\label{Bessel}
J_{\nu}(r)=\frac{(r/2)^{\nu}}{\Gamma\left(\nu+\frac12\right)\Gamma(1/2)}\int_{-1}^{1}e^{isr}(1-s^2)^{(2\nu-1)/2} ds, \quad \nu>-1/2, r>0.
\end{equation}

\begin{lemma}\label{lem: J} Let $J_\nu(r)$ be the Bessel function defined in \eqref{Bessel} and $R\gg1$, then there exists a constant $C$ independent of $\nu$ and $R$ such that
\begin{equation}\label{bessel-r}
|J_\nu(r)|\leq
\frac{Cr^\nu}{2^\nu\Gamma(\nu+\frac12)\Gamma(1/2)}\left(1+\frac1{\nu+1/2}\right),
\end{equation}
and
\begin{equation}\label{est:b}
\int_{R}^{2R} |J_\nu(r)|^2 dr \leq C.
\end{equation}
\end{lemma}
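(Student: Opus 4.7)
\textbf{Pointwise bound \eqref{bessel-r}.} The plan is to work directly from the integral representation \eqref{Bessel}. Moving the absolute value inside the integral and using $|e^{isr}|\le 1$ gives
\[
|J_\nu(r)|\le \frac{(r/2)^\nu}{\Gamma(\nu+\tfrac12)\Gamma(\tfrac12)}\int_{-1}^1(1-s^2)^{(2\nu-1)/2}\,ds.
\]
The remaining integral is a Beta integral equal to $B(\nu+\tfrac12,\tfrac12)=\Gamma(\nu+\tfrac12)\Gamma(\tfrac12)/\Gamma(\nu+1)$, which yields the clean estimate $|J_\nu(r)|\le (r/2)^\nu/\Gamma(\nu+1)$. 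To recover the form stated in \eqref{bessel-r}, I would rewrite $1/\Gamma(\nu+1)=B(\nu+\tfrac12,\tfrac12)/(\Gamma(\nu+\tfrac12)\Gamma(\tfrac12))$ and verify that $B(\nu+\tfrac12,\tfrac12)\le C\bigl(1+\tfrac{1}{\nu+1/2}\bigr)$ uniformly in $\nu\ge 0$. This is elementary: the Beta factor is a continuous function of $\nu\ge 0$ taking value $\pi$ at $\nu=0$, and by Stirling it decays like $\sqrt{\pi/\nu}$ as $\nu\to\infty$, while the right-hand side stays bounded away from zero.

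\textbf{The $L^2$ bound \eqref{est:b}.} I would split the argument according to the relative sizes of the spectral index $\nu$ and the scale $R$. In the \emph{low-frequency regime} $\nu\le R$, the interval $[R,2R]$ lies entirely in the oscillatory region $r\ge\nu$, and one has the uniform Bessel estimate $|J_\nu(r)|\le C r^{-1/2}$ (a consequence of Hankel's asymptotic expansion with remainder control uniform in $\nu$, or equivalently of Langer/Olver uniform asymptotics past the turning point). This immediately gives
\[
\int_R^{2R}|J_\nu(r)|^2\,dr \le C\int_R^{2R}\frac{dr}{r}=C\log 2.
\]
In the \emph{high-frequency regime} $\nu\ge R$, I would use the classical Weber--Schafheitlin identity $\int_0^\infty J_\nu(r)^2\,\tfrac{dr}{r}=\tfrac{1}{2\nu}$, valid for $\nu>0$, combined with $r^{-1}\ge (2R)^{-1}$ on $[R,2R]$:
\[
\int_R^{2R}|J_\nu(r)|^2\,dr \le 2R\int_R^{2R}|J_\nu(r)|^2\,\frac{dr}{r}\le \frac{R}{\nu}\le 1.
\]
Since the two regimes overlap and both give bounds independent of $\nu$ and $R$, the desired estimate follows.

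\textbf{Main obstacle.} The delicate point is not the two endpoint regimes but the uniform validity of the pointwise estimate $|J_\nu(r)|\lesssim r^{-1/2}$ down to the transition $r\approx\nu$: the classical Hankel expansion only controls the remainder for $r\gg\nu$, so one has to quote (or re-derive from the Langer transformation and the Airy asymptotics) a uniform bound whose implicit constant does not blow up at the turning point. Once this ingredient is accepted, the proof of \eqref{est:b} reduces to the two-region split above, and the proof of \eqref{bessel-r} is essentially a rewriting of the Beta evaluation.
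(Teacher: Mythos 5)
Your derivation of \eqref{bessel-r} is correct and matches what the paper means by ``direct computation'': moving the absolute value inside the integral representation, evaluating the Beta integral $\int_{-1}^1(1-s^2)^{\nu-\frac12}\,ds = B(\nu+\tfrac12,\tfrac12)$, and checking $B(\nu+\tfrac12,\tfrac12)\le C(1+\tfrac1{\nu+1/2})$, which indeed holds (at $\nu\to-\tfrac12^+$ both sides blow up like $(\nu+\tfrac12)^{-1}$; at $\nu\to\infty$ the left side decays like $\nu^{-1/2}$).

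However, your argument for \eqref{est:b} has a genuine gap in the low-frequency regime $\nu\le R$, and your diagnosis of the ``main obstacle'' is off. You acknowledge that the pointwise bound $|J_\nu(r)|\lesssim r^{-1/2}$ for $r\ge\nu$ needs to be uniform ``down to the transition $r\approx\nu$,'' and suggest this should follow from Langer/Airy uniform asymptotics. It does not: this pointwise bound is simply \emph{false} near the turning point. One has $|J_\nu(\nu)|\sim c\,\nu^{-1/3}$ (Airy/Landau), which exceeds $C\nu^{-1/2}$ for every fixed $C$ once $\nu$ is large. So when $\nu$ is comparable to $R$, the left end of $[R,2R]$ sits in the transitional zone and the bound you want to cite does not hold; Langer's uniform asymptotics give an Airy profile of height $\nu^{-1/3}$ there, not $r^{-1/2}$. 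The conclusion \eqref{est:b} is still true, but your Case~1 argument as written does not prove it.

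The fix is cheap because your Case~2 is actually robust: the Weber--Schafheitlin identity gives $\int_R^{2R}|J_\nu|^2\,dr\le R/\nu$, which stays $O(1)$ for all $\nu\gtrsim R$, not merely $\nu\ge R$. So move the split to $\nu\le R/2$ versus $\nu\ge R/2$. In the first regime every $r\in[R,2R]$ satisfies $r\ge 2\nu$, hence $r^2-\nu^2\gtrsim r^2$, and the Debye asymptotic $|J_\nu(r)|\lesssim (r^2-\nu^2)^{-1/4}$ (uniform in $\nu$ away from the turning point) does give $|J_\nu(r)|\lesssim r^{-1/2}$; your $\log 2$ computation then goes through. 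In the second regime Weber--Schafheitlin gives $R/\nu\le 2$. Alternatively, one can keep your split and replace the $r^{-1/2}$ bound by the uniform two-sided estimate $|J_\nu(r)|\lesssim r^{-1/4}(r-\nu)^{-1/4}$ for $r>\nu$ (plus exponential decay for $r<\nu$ and the $\nu^{-1/3}$ bound in the transition), and integrate; the paper's citation to \cite[Lemma 2.2]{MZZ} is presumably packaging exactly these standard asymptotics. Since the paper gives no detail beyond that citation, your Weber--Schafheitlin idea is a genuinely different and arguably cleaner route for large $\nu$, but the low-$\nu$ half must be repaired as above.
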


\begin{proof} The first one is obtained by a direct computation. The  inequality \eqref{est:b} is  a direct consequence of the asymptotically behavior of Bessel function; see \cite[Lemma 2.2]{MZZ}.

\end{proof}

The Hankel transform satisfies the following properties (see \cite{BPSS, PST}):
\begin{lemma}\label{Lem:Hankel}
Let $\mathcal{H}_{\nu}$ be defined as above  and $A_{\nu}:=-\partial_r^2-\frac{1}r\partial_r+\frac{\nu^2}{r^2}$. Then

$(\rm{i})$ $\mathcal{H}_{\nu}=\mathcal{H}_{\nu}^{-1}$,

$(\rm{ii})$ $\mathcal{H}_{\nu}$ is self-adjoint, i.e.
$\mathcal{H}_{\nu}=\mathcal{H}_{\nu}^*$,

$(\rm{iii})$ $\mathcal{H}_{\nu}$ is an $L^2$ isometry, i.e.
$\|\mathcal{H}_{\nu}\phi\|_{L^2_\xi}=\|\phi\|_{L^2_x}$,

$(\rm{iv})$ $\mathcal{H}_{\nu}(
A_{\nu}\phi)(\xi)=|\xi|^2(\mathcal{H}_{\nu} \phi)(\xi)$, for
$\phi\in L^2$.
\end{lemma}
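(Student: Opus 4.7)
The plan is to derive the four properties in the order (iv), (ii), (i), (iii), since (iii) follows formally from (i) and (ii), and (i) is the only genuinely substantive step (the classical Hankel inversion formula).

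For (iv), I would start from the Bessel ODE
\[
J_\nu''(x)+\frac{1}{x}J_\nu'(x)+\Big(1-\frac{\nu^2}{x^2}\Big)J_\nu(x)=0,
\]
which, after rescaling, reads $A_\nu^{(r)} J_\nu(r\rho)=\rho^2 J_\nu(r\rho)$, where $A_\nu^{(r)}$ denotes the action of $A_\nu$ in the $r$ variable. Then for $\phi$ smooth and compactly supported in $(0,\infty)$ (so that no boundary issues arise at $r=0$ or $r=\infty$), I would integrate by parts twice against the weight $r\,dr$, using that $A_\nu = -r^{-1}\partial_r(r\partial_r\cdot)+\nu^2/r^2$ is symmetric on this weighted space. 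This yields
\[
\mathcal{H}_\nu(A_\nu\phi)(\rho)=\int_0^\infty\bigl(A_\nu^{(r)} J_\nu(r\rho)\bigr)\phi(r)\,r\,dr=\rho^2(\mathcal{H}_\nu\phi)(\rho),
\]
and the general case follows by density in the form domain of $A_\nu$.

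For (ii), the kernel $J_\nu(r\rho)$ is real and symmetric in $(r,\rho)$, so Fubini applied to
\[
\langle \mathcal{H}_\nu\phi,\psi\rangle_{L^2(\rho\,d\rho)}=\int_0^\infty\!\!\int_0^\infty J_\nu(r\rho)\phi(r)\overline{\psi(\rho)}\,r\,dr\,\rho\,d\rho
\]
immediately gives $\langle \phi,\mathcal{H}_\nu\psi\rangle_{L^2(r\,dr)}$, hence $\mathcal{H}_\nu^*=\mathcal{H}_\nu$. For (i), I would invoke the classical Hankel inversion theorem, which asserts the distributional identity
\[
\int_0^\infty J_\nu(r\rho)J_\nu(s\rho)\,\rho\,d\rho=\frac{\delta(r-s)}{r},\qquad \nu>-1/2,
\]
established for instance via the Fourier–Bessel expansion on the half-line or as a limit of the discrete expansion on a disk of radius $R\to\infty$; applying this to $\mathcal{H}_\nu\circ\mathcal{H}_\nu\phi$ gives $\mathcal{H}_\nu^2=\mathrm{Id}$. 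Finally, (iii) is automatic: by (ii) and (i),
\[
\|\mathcal{H}_\nu\phi\|_{L^2}^2=\langle \mathcal{H}_\nu\phi,\mathcal{H}_\nu\phi\rangle=\langle \phi,\mathcal{H}_\nu^2\phi\rangle=\|\phi\|_{L^2}^2.
\]

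The main obstacle is really just (i): the Hankel inversion theorem requires some care because the Bessel kernel is only conditionally integrable and the asymptotics of $J_\nu$ must be used to justify interchanging the order of integration. In practice I would avoid reproving this from scratch and simply cite \cite{BPSS, PST} (as the authors do), sketching only the algebraic reductions (iv)$\Rightarrow$(ii)$\Rightarrow$(iii) above.
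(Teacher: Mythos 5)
Your proposal is correct and essentially matches the paper's approach: the paper states this lemma without proof, deferring entirely to \cite{BPSS, PST} for these classical properties of the Hankel transform, which is exactly what you ultimately propose to do for the substantive inversion step (i). Your additional sketches --- (iv) via the Bessel ODE and symmetry of $A_\nu$ in $L^2(r\,dr)$, (ii) via the real symmetric kernel and Fubini on a dense subspace, and (iii) as an algebraic consequence of (i) and (ii) --- are correct and standard.
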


Briefly recalling the functional calculus for well-behaved functions $F$ (see \cite{Taylor}),
\begin{equation}\label{funct}
\begin{split}
F(\mathcal{L}_{{\A},a}) f(r,\theta)&=\sum_{k=0}^\infty \psi_{k}(\theta) \int_0^\infty F(\rho^2) J_{\nu_k}(r\rho)b_{k}(\rho) \,\rho d\rho\\
&= \int_0^\infty\int_0^{2\pi} K(r,\theta,r_2,\theta_2) f(r_2,\theta_2) r_2 dr_2 d\theta_2
\end{split}
\end{equation}
where $b_{k}(\rho)=(\mathcal{H}_{\nu_k}a_k)(\rho)$, $f(r,\theta)=\sum\limits_{k=0}^\infty a_{k}(r)\psi_{k}(\theta)$ and
\begin{equation}\label{gkernel}
\begin{split}
K(r,\theta,r_2,\theta_2)=\sum_{k=0}^\infty \psi_{k}(\theta)\overline{ \psi_{k}(\theta_2)} \int_0^\infty F(\rho^2) J_{\nu_k}(r\rho) J_{\nu_k}(r_2\rho)\,\rho d\rho .
\end{split}
\end{equation}

\subsection{Sobolev embedding}

\begin{lemma}\label{Lem:sobequil2}
Let $a, {\A}\in
W^{1,\infty}(\mathbb{S}^{1},\mathbb{R}^2)$, and assume \eqref{equ:condassa}. Then
\begin{equation}\label{equ:equisobeqi}
  \|f\|_{\dot{H}^s_{{\A},0}(\R^2)}\simeq \|f\|_{\dot{H}^s_{{\A},a}(\R^2)},
\end{equation}
for all $s\in[-1,1]$.
\end{lemma}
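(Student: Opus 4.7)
The plan is to establish the equivalence first at the endpoint $s=1$ using Hardy's inequality, then bootstrap to the full range $s\in[-1,1]$ by interpolation and duality. The case $s=0$ is trivial since $\dot H^0_{\A,a}=\dot H^0_{\A,0}=L^2$.

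\textbf{Step 1 ($s=1$).} Since $\LL_{\A,a}$ is the self-adjoint Friedrichs extension associated to the quadratic form
\[
q_{\A,a}(f)=\int_{\R^2}|\nabla_{\A}f|^2\,dx+\int_{\R^2}\frac{a(\hat x)}{|x|^2}|f|^2\,dx,
\]
we have $\|f\|_{\dot H^1_{\A,a}}^2=q_{\A,a}(f)$, and similarly for $a\equiv0$. Writing $a=a_+-a_-$ and applying the magnetic Hardy inequality \eqref{equ:ghardyLW} together with \eqref{equ:mu1}, we get
\[
-\|a_-\|_{L^\infty(\mathbb{S}^1)}\int\frac{|f|^2}{|x|^2}\,dx\le\int\frac{a(\hat x)}{|x|^2}|f|^2\,dx\le \|a_+\|_{L^\infty(\mathbb{S}^1)}\int\frac{|f|^2}{|x|^2}\,dx\le \frac{\|a_+\|_{L^\infty(\mathbb{S}^1)}}{\mu_1(\A,0)}\int|\nabla_\A f|^2\,dx.
\]
Combined with $-\|a_-\|_{L^\infty}\int|f|^2/|x|^2\ge -(\|a_-\|_{L^\infty}/\mu_1(\A,0))\int|\nabla_\A f|^2$ and assumption \eqref{equ:condassa}, this yields constants $0<c\le C$ with
\[
c\,\|f\|_{\dot H^1_{\A,0}}^2\le \|f\|_{\dot H^1_{\A,a}}^2\le C\,\|f\|_{\dot H^1_{\A,0}}^2
\]
on the common form core of the two operators.

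\textbf{Step 2 (interpolation, $0\le s\le 1$).} Since $\LL_{\A,a}$ and $\LL_{\A,0}$ are both non-negative self-adjoint operators on $L^2$, the spectral theorem identifies
\[
[L^2,\dot H^1_{\A,a}]_{\theta}=\dot H^{\theta}_{\A,a},\qquad [L^2,\dot H^1_{\A,0}]_{\theta}=\dot H^{\theta}_{\A,0},
\]
for $\theta\in[0,1]$ (complex interpolation of domains of fractional powers). The identity map $\mathrm{Id}:\dot H^1_{\A,0}\to \dot H^1_{\A,a}$ is bounded by Step 1, and trivially bounded on $L^2$; complex interpolation therefore gives $\|f\|_{\dot H^s_{\A,a}}\lesssim \|f\|_{\dot H^s_{\A,0}}$ for $s\in[0,1]$, and the reverse inequality follows identically by swapping the roles. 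Alternatively, one may invoke the Heinz--Kato inequality for the two square roots.

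\textbf{Step 3 (duality, $-1\le s\le 0$).} Each space $\dot H^{-s}_{\A,\star}$ is the dual of $\dot H^{s}_{\A,\star}$ under the $L^2$ pairing, since $\LL_{\A,\star}^{s/2}$ is a self-adjoint isomorphism between them. Hence for $s\in[0,1]$,
\[
\|f\|_{\dot H^{-s}_{\A,a}}=\sup_{\|g\|_{\dot H^s_{\A,a}}\le 1}|\langle f,g\rangle|\simeq \sup_{\|g\|_{\dot H^s_{\A,0}}\le 1}|\langle f,g\rangle|=\|f\|_{\dot H^{-s}_{\A,0}},
\]
by the equivalence proven in Step 2. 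This concludes the proof.

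The only genuine obstacle is the $s=1$ estimate, where one must absorb the possibly negative singular term $a(\hat x)/|x|^2$ using Hardy. Once that quantitative coercivity is obtained under \eqref{equ:condassa}, Steps 2 and 3 are abstract operator-theoretic applications of the spectral theorem.
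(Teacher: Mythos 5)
Your proof is correct and follows essentially the same route as the paper: the paper also establishes the case $s=1$ directly from the magnetic Hardy inequality \eqref{equ:ghardyLW} (splitting $a=a_+-a_-$ and absorbing the singular potential), then passes to the full range $s\in[-1,1]$ by the one-line remark ``by duality and interpolation.'' You have merely spelled out those two abstract steps—complex interpolation of domains of fractional powers (or Heinz--Kato) for $s\in[0,1]$, and $L^2$-duality for $s\in[-1,0]$—in more detail than the paper does; the mathematical content is identical.
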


\begin{proof}
For $s=1$, the proof is an immediate consequence of assumption \eqref{equ:ghardyLW}.
Indeed, we have
\begin{equation}
 \|f\|_{\dot{H}^1_{{\A},a}(\R^2)}^2=\int_{\R^2} \big(|\nabla_{\A}f|^2+\frac{a(\hat{x})}{|x|^2}|f|^2\big)dx
\end{equation}
then
\begin{equation}
\int_{\R^2} \big(|\nabla_{\A}f|^2-\frac{a_-(\hat{x})}{|x|^2}|f|^2\big)dx\leq \|f\|_{\dot{H}^1_{{\A},a}(\R^2)}^2\leq \int_{\R^2} \big(|\nabla_{\A}f|^2+\frac{a_+(\hat{x})}{|x|^2}|f|^2\big)dx
\end{equation}
where $a_-:=\max\{0,-a\}$ and $a_+:=\max\{0,a\}$. From \eqref{equ:condassa} and \eqref{equ:ghardyLW},  there exist a small constant $c$ and a large constant $C$ such that
\begin{equation}
\begin{split}
\int_{\R^2} \big(|\nabla_{\A}f|^2-\frac{a_-(\hat{x})}{|x|^2}|f|^2\big)dx&\geq \big(1-\frac{\|a_-\|_{L^\infty(\mathbb{S}^1)}}{ \min_{k\in\Z}\{|k-\Phi_{\A}|\}^2}\big) \int_{\R^2} |\nabla_{\A}f|^2dx
\\& \geq c\int_{\R^2} |\nabla_{\A}f|^2dx
 \end{split}
\end{equation}
and
\begin{equation}
\begin{split}
 \int_{\R^2} \big(|\nabla_{\A}f|^2+\frac{a_+(\hat{x})}{|x|^2}|f|^2\big)dx&\leq \big(1+\frac{\|a_+\|_{L^\infty(\mathbb{S}^1)}}{ \min_{k\in\Z}\{|k-\Phi_{\A}|\}^2}\big) \int_{\R^2} |\nabla_{\A}f|^2dx
\\& \leq C\int_{\R^2} |\nabla_{\A}f|^2dx.
 \end{split}
\end{equation}
Then, by duality and interpolation, one obtains the full range $s\in[-1,1]$.
\end{proof}

Finally, we derive the following Sobolev embedding.

\begin{lemma}\label{cor:sobl2}
Let $a, {\A}\in
W^{1,\infty}(\mathbb{S}^{1},\mathbb{R}^2)$, and assume \eqref{equ:condassa}. Then,
\begin{equation}\label{equ:sobeml2}
  \|f\|_{L^p(\R^2)}\leq C\|f\|_{\dot{H}^{1-\frac2p}_{{\A},a}(\R^2)}
\end{equation}
for any $2\leq p<+\infty$.
\end{lemma}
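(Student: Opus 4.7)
My plan is a two-step reduction: first, use Lemma \ref{Lem:sobequil2} to pass from the full Hamiltonian $\mathcal{L}_{\A,a}$ to the purely magnetic one $\mathcal{L}_{\A,0}$; second, combine the diamagnetic pointwise bound for the magnetic heat semigroup with the subordination formula and the classical Hardy--Littlewood--Sobolev inequality, to pass from $\mathcal{L}_{\A,0}$ to $-\Delta$.

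\textbf{Setup and reduction.} The case $p=2$ is immediate, since $\dot{H}^0_{\A,a}(\R^2)=L^2(\R^2)$. For $2<p<\infty$ I would set $s:=1-\tfrac{2}{p}\in(0,1)\subset[-1,1]$, so that Lemma \ref{Lem:sobequil2} gives $\|f\|_{\dot{H}^s_{\A,a}}\simeq \|f\|_{\dot{H}^s_{\A,0}}=\|\mathcal{L}_{\A,0}^{s/2}f\|_{L^2}$. Writing $g:=\mathcal{L}_{\A,0}^{s/2}f$, the desired estimate is equivalent to the mapping property
\[
\mathcal{L}_{\A,0}^{-s/2}:L^2(\R^2)\longrightarrow L^p(\R^2).
\]

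\textbf{Main step.} To establish this mapping property, I would invoke the subordination identity
\[
\mathcal{L}_{\A,0}^{-s/2}=\frac{1}{\Gamma(s/2)}\int_0^\infty t^{s/2-1}\,e^{-t\mathcal{L}_{\A,0}}\,dt,\qquad 0<s<2,
\]
together with the diamagnetic pointwise bound for the magnetic heat semigroup
\[
\bigl|e^{-t\mathcal{L}_{\A,0}}g(x)\bigr|\leq e^{t\Delta}|g|(x),\qquad \text{a.e.}\ x\in\R^2,\ t>0,
\]
to conclude the pointwise domination $\bigl|\mathcal{L}_{\A,0}^{-s/2}g(x)\bigr|\leq (-\Delta)^{-s/2}|g|(x)$. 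Since my choice $s=1-\tfrac{2}{p}$ satisfies $0<s<1=n/2$ with $n=2$ and $\tfrac{1}{p}=\tfrac{1}{2}-\tfrac{s}{2}$, the classical Hardy--Littlewood--Sobolev inequality provides $(-\Delta)^{-s/2}:L^2(\R^2)\to L^p(\R^2)$. Taking $L^p$ norms in the pointwise bound above then yields $\|\mathcal{L}_{\A,0}^{-s/2}g\|_{L^p}\lesssim \|g\|_{L^2}$, which is the required estimate.

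\textbf{Main obstacle.} The delicate step will be justifying the diamagnetic pointwise bound for $e^{-t\mathcal{L}_{\A,0}}$ in this critical singular regime, since $\A(\hat x)/|x|\notin L^2_{\mathrm{loc}}(\R^2)$ and the classical Simon/Kato proof does not apply verbatim. However, under assumption \eqref{equ:condassa} the operator $\mathcal{L}_{\A,0}$ is defined via the Friedrichs extension on the form domain $H^1_{\A,0}$, and Kato's inequality in form-sense $|\nabla|f|\,|\leq|\nabla_{\A}f|$ is exactly what is needed to derive the heat kernel domination; this is the same mechanism that underlies the Schr\"odinger analysis in \cite{FFFP}. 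Once the pointwise heat bound is in hand, the subordination argument and HLS are standard.
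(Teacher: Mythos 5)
Your proof is correct, and it is a genuinely different (and in fact more explicit) route than the one the paper gives. The paper's proof of Lemma~\ref{cor:sobl2} is a one-line reduction: in the purely magnetic case, it cites the Euclidean embedding $\dot H^{1-\frac2p}(\R^2)\hookrightarrow L^p(\R^2)$ together with the pointwise gradient diamagnetic inequality $|\nabla|f|(x)|\leq |\nabla_{\A}f(x)|$, and then invokes Lemma~\ref{Lem:sobequil2}. That gradient inequality transfers only the $s=1$ Sobolev norm, so for the genuinely fractional exponent $s=1-\tfrac2p\in(0,1)$ the paper's argument is implicitly relying on some further mechanism (heat-kernel domination, Littlewood--Paley/Bernstein from Section~2.3, or a scaling trick from the inhomogeneous version); none of this is spelled out. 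Your subordination argument makes that mechanism explicit: writing $\mathcal{L}_{\A,0}^{-s/2}=\frac{1}{\Gamma(s/2)}\int_0^\infty t^{s/2-1}e^{-t\mathcal{L}_{\A,0}}\,dt$ and dominating the heat semigroup gives a Riesz-potential kernel bound, and HLS finishes. What each approach buys: the paper's route is shorter and leans on the gradient-level diamagnetic inequality that is standard in \cite{FFT}; yours isolates precisely what is needed (a Gaussian-type kernel bound) and closes the fractional-order gap cleanly.

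On the obstacle you flag: you do not actually need the sharp pointwise domination $|e^{-t\mathcal{L}_{\A,0}}g(x)|\leq e^{t\Delta}|g|(x)$. The paper's Proposition~\ref{prop:pointest} establishes the Gaussian upper bound
\[
\big|e^{-t\mathcal{L}_{\A,0}}(x,y)\big|\lesssim t^{-1}e^{-\frac{|x-y|^2}{4t}},\qquad t>0,
\]
by a direct computation from the explicit heat kernel (Proposition~\ref{prop:heatkernel}), precisely to cope with the fact that $\A(\hat x)/|x|\notin L^2_{\mathrm{loc}}$. Feeding this into your subordination identity gives $\big|\mathcal{L}_{\A,0}^{-s/2}(x,y)\big|\lesssim |x-y|^{-(2-s)}$, which is all that HLS requires (the constant from Gaussian domination versus exact semigroup comparison is harmless). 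So your proof goes through once you replace the sharp semigroup domination by the weaker Gaussian bound that the paper actually proves, and you can drop the worry about the critical-singularity version of Kato's inequality.
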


\begin{proof}
 In the purely magnetic case $a\equiv0$, this immediately follows by the usual Sobolev embedding $\dot H^{1-\frac2p}(\R^2)\hookrightarrow L^p(\R^2)$, and
 the diamagnetic inequality (see \cite[Lemma A.1]{FFT})
 $$
|\nabla|f|(x)| \leq |\nabla_{\A} f(x)|.
 $$
 Then the proof follows by Lemma \ref{Lem:sobequil2}.
\end{proof}

\subsection{Littlewood-Paley theory for $\LL_{{\A},0}$}

In this subsection, we establish some harmonic analysis tools associated to the purely magnetic operator $\LL_{{\A},0}$. Our results in this subsection rely on the  heat kernel estimate
\begin{equation}\label{equ:heat}
\big|e^{-t\LL_{{\A},0}}(x,y)\big|\lesssim
\frac{1}{t}\exp\Big(-\frac{|x-y|^2}{4t}\Big),\quad \forall\;t>0,
\end{equation}
which will be given by Proposition \ref{prop:pointest} in next section.
Then
\begin{equation}\label{equ:Mihmult}
  m(\sqrt{\LL_{{\A},0}}):L^p(\R^2)\mapsto L^p(\R^2),\quad 1<p<\infty
\end{equation}
where $m\in C^N(\R)$ satisfies the weaker
Mikhlin-type condition for $N\geq2$
\begin{equation}\label{Minhlin}
\sup_{0\leq k\leq
N}\sup_{\lambda\in\R}\Big|\big(\lambda\partial_\lambda\big)^k
m(\lambda)\Big|\leq C<\infty.
\end{equation}
This result that the Gaussian boundedness of heat kernel implies the boundedness of Mikhlin-type multiplier  is standard, for example, see Alexopoulos \cite[Theorem 6.1]{Alex} or \cite{KMVZZ}.

Based on \eqref{equ:Mihmult}, we have the Littlewood-Paley theory result for $\LL_{{\A},0}$. We follow the statements in \cite{KMVZZ}.
Let $\phi:[0,\infty)\to[0,1]$ be a smooth function such that
\begin{align*}
\phi(\lambda)=1 \qtq{for} 0\le\lambda\le 1 \qtq{and} \phi(\lambda)=0\qtq{for} \lambda\ge 2.
\end{align*}
For each integer $j\in \Z$, we define
\begin{align*}
\phi_j(\lambda):=\phi(\lambda/2^j) \qtq{and} \varphi_j(\lambda):=\phi_j(\lambda)-\phi_{j-1}(\lambda).
\end{align*}
Clearly, $\{\varphi_j(\lambda)\}_{j\in \Z} $ forms a partition of unity for $\lambda\in(0,\infty)$.  We define the Littlewood-Paley projections as follows:
\begin{align}\label{equ:lidef}
P_{\le j} :=\phi_j\bigl(\sqrt{\LL_{{\A},0}}\,\bigr), \quad P_j :=\varphi_j(\sqrt{\LL_{{\A},0}}\,\bigr), \qtq{and} P_{>j} :=I-P_{\le j}.
\end{align}

We also define another family of Littlewood-Paley projections via the heat kernel, as follows:
\begin{align*}
\tilde{P}_{\le j} := e^{-\LL_{{\A},0}/2^{2j}}, \quad \tilde{P}_j:=e^{-\LL_{{\A},0}/2^{2j}}-e^{-4\LL_{{\A},0}/2^{2j}}, \qtq{and} \tilde{P}_{>j}:=I-\tilde{P}_{\le j}.
\end{align*}

\begin{proposition}[Bernstein inequality] Let $\{\varphi_j\}_{j\in\mathbb Z}$ be a Littlewood-Paley sequence, let ${\A}\in W^{1,\infty}(\mathbb S^1)$ and assume \eqref{eq:transversal}.
Then for $1< p\leq q<\infty$ or $1\leq p<q\leq \infty$,
there exists constant $C_{p,q}$ depending on $p,q$ such that
\begin{equation}\label{bern1}
\Big\|\varphi_j(\sqrt{\LL_{{\A},0}})f\Big\|_{L^q(\R^2)}\leq
C_{p,q}2^{2j(\frac1p-\frac1q)}\|f\|_{L^p(\R^2)},
\end{equation}
and for $1\leq p\leq \infty$
\begin{equation}\label{bern2}
\Big\|\nabla_{\A}\varphi_j(\sqrt{\LL_{{\A},0}})f\Big\|_{L^p(\R^2)}\leq
C_{p,q}2^{j}\|f\|_{L^p(\R^2)}.
\end{equation}

\end{proposition}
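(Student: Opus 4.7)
The strategy is to combine the Mikhlin-type multiplier bound \eqref{equ:Mihmult} with the Gaussian heat-kernel estimate \eqref{equ:heat}, exploiting a semigroup factorization that inserts a heat dissipation at the matching dyadic scale.

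For \eqref{bern1}, I write
\[
\varphi_j\bigl(\sqrt{\LL_{{\A},0}}\bigr) \;=\; g_j\bigl(\sqrt{\LL_{{\A},0}}\bigr)\, e^{-\LL_{{\A},0}/2^{2j}},
\]
where $g_j(\lambda):=\varphi_j(\lambda)\,e^{\lambda^2/2^{2j}}$ satisfies $g_j(\lambda)=g(\lambda/2^j)$ with $g$ smooth and compactly supported away from the origin (obtained by multiplying the fixed dyadic profile underlying $\varphi_j$ by $e^{\mu^2}$). Since $(\lambda\partial_\lambda)^k$ is invariant under $\lambda\mapsto\lambda/2^j$, the Mikhlin seminorms $\sup_\lambda|(\lambda\partial_\lambda)^kg_j(\lambda)|$ are bounded uniformly in $j$, so \eqref{equ:Mihmult} yields $\|g_j(\sqrt{\LL_{{\A},0}})\|_{L^r\to L^r}\lesssim 1$ for every $1<r<\infty$, uniformly in $j$. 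On the other hand, \eqref{equ:heat} together with Young's inequality gives $\|e^{-t\LL_{{\A},0}}\|_{L^p\to L^q}\lesssim t^{-(1/p-1/q)}$, and at $t=2^{-2j}$ this produces exactly the desired factor $2^{2j(1/p-1/q)}$. Composing the two bounds settles \eqref{bern1} in the interior range $1<p\leq q<\infty$. For the endpoints $p=1$ or $q=\infty$ I would instead use the symmetric factorization $\varphi_j=e^{-\LL_{{\A},0}/2^{2j+1}}\,\tilde g_j(\sqrt{\LL_{{\A},0}})\,e^{-\LL_{{\A},0}/2^{2j+1}}$ with $\tilde g_j$ still Mikhlin-uniform, inserting an intermediate exponent $r\in(1,\infty)$ so that the outer heat factors handle the endpoint-to-$L^r$ transitions and $\tilde g_j(\sqrt{\LL_{{\A},0}})$ acts boundedly on $L^r$ by \eqref{equ:Mihmult}.

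For \eqref{bern2} I use the variant factorization
\[
\nabla_{\A}\varphi_j\bigl(\sqrt{\LL_{{\A},0}}\bigr) \;=\; \bigl[\nabla_{\A} e^{-\LL_{{\A},0}/2^{2j}}\bigr]\,g_j\bigl(\sqrt{\LL_{{\A},0}}\bigr).
\]
The main new ingredient needed is the magnetic gradient bound
\[
\bigl|\nabla_{\A} e^{-t\LL_{{\A},0}}(x,y)\bigr|\;\lesssim\; t^{-3/2}\exp\!\bigl(-c|x-y|^2/t\bigr),
\]
which upgrades \eqref{equ:heat} by one derivative. Granting this, Young's inequality yields $\|\nabla_{\A} e^{-t\LL_{{\A},0}}\|_{L^p\to L^p}\lesssim t^{-1/2}$ for every $1\leq p\leq\infty$, and taking $t=2^{-2j}$ produces the target $2^j$. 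Composition with the uniform $L^p$-boundedness of $g_j(\sqrt{\LL_{{\A},0}})$ (with an additional symmetric heat factor handling $p\in\{1,\infty\}$ via a pointwise kernel estimate of Sikora type on the Mikhlin multiplier) then closes \eqref{bern2}.

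The main obstacle is establishing the magnetic gradient heat-kernel bound. Because the potential ${\A}(\hat x)/|x|$ fails to be in $L^2_{\mathrm{loc}}$, standard parabolic regularity results cannot be invoked directly. I would derive it by pairing \eqref{equ:heat} with a Caccioppoli-type energy identity for the self-adjoint realization of $\LL_{{\A},0}$ on annular regions avoiding the origin, combined with a gauge-invariant approximation that cuts off ${\A}(\hat x)/|x|$ on $\{|x|<\eps\}$ and passes to the limit $\eps\to0^+$ using the fact that the singular point has zero magnetic capacity. Once this gradient Gaussian estimate is secured, the rest is the routine Mikhlin–heat-semigroup composition described above.
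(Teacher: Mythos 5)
Your treatment of \eqref{bern1} matches the paper's: both factor $\varphi_j(\sqrt{\LL_{{\A},0}})$ as a heat propagator at scale $2^{-2j}$ composed with a uniformly Mikhlin multiplier, pick up the $2^{2j(1/p-1/q)}$ gain from the Gaussian bound \eqref{equ:heat} via Young, and fall back on intermediate exponents for the endpoints. Nothing to change there.

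The problem is in \eqref{bern2}. Your ``main new ingredient'' is the pointwise magnetic gradient Gaussian bound
\[
\bigl|\nabla_{\A} e^{-t\LL_{{\A},0}}(x,y)\bigr|\;\lesssim\; t^{-3/2}\exp\bigl(-c|x-y|^2/t\bigr),
\]
and everything downstream (the $L^p\to L^p$ bound for all $1\le p\le\infty$ via Young) depends on it. The paper does \emph{not} prove this pointwise bound, and the route you sketch cannot produce it. A Caccioppoli/energy identity on annuli, of the kind implemented in Grigor'yan and Coulhon--Duong, only yields an \emph{integrated} control of the gradient in terms of integrated control of the function; indeed this is precisely what the paper uses, namely the weighted $L^2$ estimate \eqref{equ:heatinetq},
\[
\int_{\R^2}\bigl|\nabla_{\A}p_t(x,y)\bigr|^2 e^{\gamma|x-y|^2/t}\,dx\leq C_\gamma t^{-2},
\]
obtained from the pointwise bounds \eqref{equ:pointheat}--\eqref{equ:pointheatder} on $p_t$, $\partial_t p_t$ and $\LL_{{\A},0}p_t$ (no gradient there). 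Upgrading this to a pointwise bound on $\nabla_{\A}p_t$ would require some local parabolic regularity (De Giorgi--Nash--Moser or Schauder-type) uniformly up to the origin, which is exactly what the non-$L^2_{\mathrm{loc}}$ singularity of ${\A}(\hat x)/|x|$ obstructs; the ``zero magnetic capacity'' cutoff argument you invoke is not a substitute for this missing regularity step. The paper sidesteps the issue entirely: Cauchy--Schwarz applied to \eqref{equ:heatinetq} gives the $L^1_y$-bound $\int|\nabla_{\A}p_t(x,y)|\,dx\lesssim t^{-1/2}$, hence an $L^1\to L^1$ operator bound for $\nabla_{\A}e^{-t\LL_{{\A},0}}$, and then the Imekraz--Ouhabaz identity with $\psi_e(x)=\varphi(\sqrt x)e^{2x}$ transfers this $L^1$ semigroup bound to $\nabla_{\A}\varphi(\sqrt{\LL_{{\A},0}})$ on $L^1$; the $p=2$ case is spectral calculus, and $1<p<\infty$ follows by interpolation and duality. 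This is weaker input, hence a more robust argument, than what you are trying to establish. If you want to keep your factorization, you must replace your pointwise claim with the integrated bound and the IO-type transfer, since the intermediate pointwise step is a genuine gap.

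A secondary (but fixable) issue: in your factorization for \eqref{bern2} you still need $g_j(\sqrt{\LL_{{\A},0}})$ to act on $L^p$ for $p\in\{1,\infty\}$, and \eqref{equ:Mihmult} does not give that. You wave at a ``Sikora-type'' kernel estimate, but that again amounts to a pointwise kernel bound for a compactly supported multiplier, which is precisely the kind of estimate one would like to avoid here and which the paper replaces by the interpolation/duality reduction to $p=1$ and $p=2$.
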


\begin{proof} We first prove \eqref{bern1}. By scaling, it suffices to show \eqref{bern1} with $j=0$.  As  $P_j$  can be written as products of $\tilde{P}_{\leq j}$ with $L^p$-bounded multipliers by \eqref{equ:Mihmult}, it suffices to prove that  $\tilde{P}_{\leq j}$ is bounded from $L^p$ to $L^q$. This just follows from the heat kernel estimate \eqref{equ:heat} and Young's inequality for $1< p\leq q<\infty$ or $1\leq p<q\leq \infty$.

Next we prove \eqref{bern2}. By interpolation and dual argument, it suffices to prove \eqref{bern2} with $p=1$ and $p=2$. We
follow the arguments of \cite{CD, IO} based on the heat kernel Proposition \ref{prop:heatkernel}. We sketch it as follows.
When $p=2$, it follows from the standard functional calculus for the self-adjoint operator $\LL_{{\A},0}$.
To estimate \eqref{bern2} for $p=1$, we set
$$p_t(x,y):=e^{-t\LL_ {{\A},0}}(x,y).$$
Then, by Proposition \ref{prop:pointest} and Proposition \ref{prop:heatkernel}, we get
\begin{align}\label{equ:pointheat}
  |p_t(x,y)|\lesssim & t^{-1}e^{-\frac{|x-y|^2}{4t}},\\\label{equ:pointheatder}
  \big|\tfrac{\pa}{\pa t}p_t(x,y)\big|+\big|\LL_{{\A},0}p_t(x,y)\big|\lesssim & t^{-2}e^{-\frac{|x-y|^2}{8t}}.
\end{align}
Then for $\gamma<\tfrac14$, by using argument of Grigor'yan\cite{Grigo} or \cite[Lemma 2.3]{CD}, there holds
\begin{equation}\label{equ:heatinetq}
  \int_{\R^2}\big|\nabla_{\A}p_t(x,y)\big|^2e^{\gamma\frac{|x-y|^2}{t}}\;dx\leq C_\gamma t^{-2},\quad\forall\;y\in\R^2,t>0.
\end{equation}
As a consequence of Cauchy-Schwartz's inequality, we obtain
\begin{equation}\label{equ:heatintl1}
   \int_{\R^2}\big|\nabla_{\A}p_t(x,y)\big|\;dx\lesssim t^{-\frac12},\quad\forall\;y\in\R^2,t>0.
\end{equation}
Hence we get
\begin{equation}\label{equ:heatkernelderest}
  \big\|\nabla_{\A}e^{-\LL_{{\A},0}}f\big\|_{L^1(\R^2)}\leq C\|f\|_{L^1(\R^2)}.
\end{equation}
Let $\varphi$ be as above, and set $\psi(x)=\varphi(\sqrt{x})$ and $\psi_e(x):=\psi(x)e^{2x}$. We can extend $\psi$  as a $C^\infty_c$-function on $\R$ and
then its Fourier
transform $\hat{\psi}_e$ belongs to Schwartz class.
From the argument of \cite[Theorem 2.1]{IO}, we prove
\begin{align*}
 &\Big\|\nabla_{\A}\varphi(\sqrt{\LL_{{\A},0}})f\Big\|_{L^1(\R^2)}\\
 \lesssim & \int_{\R} \big| \hat{\psi}_e(\xi)\big|\cdot\big\| e^{-(1-i\xi)\LL_ {{\A},0}}\big\|_{L^1\to L^1}\big\|\nabla_{\A}e^{-\LL_ {{\A},0}}f\big\|_{L^1}\;d\xi\lesssim \|f\|_{L^1}.
\end{align*}


\end{proof}

\begin{proposition}[L-P square function inequality]\label{prop:squarefun} Let $\{\varphi_j\}_{j\in\mathbb Z}$ be a Littlewood-Paley sequence, let ${\A}\in W^{1,\infty}(\mathbb S^1)$ and assume \eqref{eq:transversal}.
Then for $1<p<\infty$,
there exist constants $c_p$ and $C_p$ depending on $p$ such that
\begin{equation}\label{square}
c_p\|f\|_{L^p(\R^2)}\leq
\Big\|\Big(\sum_{j\in\Z}|\varphi_j(\sqrt{\LL_{{\A},0}})f|^2\Big)^{\frac12}\Big\|_{L^p(\R^2)}\leq
C_p\|f\|_{L^p(\R^2)}.
\end{equation}

\end{proposition}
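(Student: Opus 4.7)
The plan is to deduce both inequalities from the Mikhlin-type spectral multiplier theorem \eqref{equ:Mihmult}, which the authors have already obtained from the Gaussian heat-kernel bound \eqref{equ:heat}. The upper bound will come from a Khintchine randomization, and the lower bound from a duality pairing against the upper bound at the conjugate exponent.

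\emph{Upper bound.} With $\{r_j(t)\}_{j\in\Z}$ the Rademacher system on $[0,1]$, Khintchine's inequality applied pointwise in $x$ gives, for any $1<p<\infty$,
$$\Big\|\Big(\sum_{j\in\Z}|\varphi_j(\sqrt{\LL_{\A,0}})f|^2\Big)^{1/2}\Big\|_{L^p(\R^2)}^p \sim_p \int_0^1\Big\|m_t(\sqrt{\LL_{\A,0}})f\Big\|_{L^p(\R^2)}^p\,dt,$$
where $m_t(\lambda):=\sum_j r_j(t)\,\varphi_j(\lambda)$. I would then verify that $m_t$ satisfies the Mikhlin-type condition \eqref{Minhlin} with constants independent of $t$: at every $\lambda>0$ only $O(1)$ terms of the sum are nonzero, and the dyadic scaling makes $(\lambda\partial_\lambda)^k \varphi_j(\lambda)$ uniformly bounded in $j$. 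Then \eqref{equ:Mihmult} yields $\|m_t(\sqrt{\LL_{\A,0}})f\|_{L^p}\lesssim\|f\|_{L^p}$ uniformly in $t\in[0,1]$, and integrating closes the upper bound.

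\emph{Lower bound.} I would argue by duality at the dual exponent $p'\in(1,\infty)$. Choose a fattened partition $\widetilde\varphi_j:=\phi_{j+1}-\phi_{j-2}$, which satisfies $\widetilde\varphi_j\equiv 1$ on $\mathrm{supp}\,\varphi_j$ and therefore $\varphi_j\widetilde\varphi_j=\varphi_j$. Using the self-adjointness of $\varphi_j(\sqrt{\LL_{\A,0}})$ and the identity $\sum_j\varphi_j\equiv 1$ on $(0,\infty)$ obtained by spectral calculus, for any Schwartz $g$,
$$\int_{\R^2} f\,\bar g\,dx=\sum_j\int_{\R^2}\varphi_j(\sqrt{\LL_{\A,0}})f\cdot\overline{\widetilde\varphi_j(\sqrt{\LL_{\A,0}})g}\,dx.$$
A pointwise Cauchy-Schwarz in $j$, followed by H\"older in $x$ with exponents $(p,p')$, bounds this by
$$\Big\|\Big(\sum_j|\varphi_j(\sqrt{\LL_{\A,0}})f|^2\Big)^{1/2}\Big\|_{L^p}\Big\|\Big(\sum_j|\widetilde\varphi_j(\sqrt{\LL_{\A,0}})g|^2\Big)^{1/2}\Big\|_{L^{p'}}.$$
The upper bound (the same Khintchine-Mikhlin argument applies to $\widetilde\varphi_j$, since it too has dyadic support and uniform Mikhlin norm) controls the second factor by $\|g\|_{L^{p'}}$, and taking the supremum over $\|g\|_{L^{p'}}=1$ gives the lower bound.

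\emph{Main obstacle.} All of the analytic heavy lifting is done by the Mikhlin theorem \eqref{equ:Mihmult}, which rests on the Gaussian heat-kernel bound \eqref{equ:heat}; the remainder is the soft Khintchine/duality packaging. The only items to check carefully are the uniform-in-$t$ Mikhlin bound for $m_t$, which is immediate from the dyadic construction, and the convergence $\sum_j\varphi_j(\sqrt{\LL_{\A,0}})f\to f$, which holds on $L^2$ by spectral theory and extends to $L^p$ for $1<p<\infty$ via the upper bound and a density argument.
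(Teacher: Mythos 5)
Your proposal is correct and takes essentially the same route the paper does: the authors simply observe that Proposition \ref{prop:squarefun} ``follows from the Rademacher functions argument in \cite{Stein}'' once the Mikhlin-type multiplier bound \eqref{equ:Mihmult} is in place, and refer to \cite{Zhang} for details, which is precisely the Khintchine randomization for the upper bound and the fattened-multiplier duality for the lower bound that you spell out.
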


\begin{proof}
Based on \eqref{equ:Mihmult}, Proposition \ref{prop:squarefun} follows from
the Rademacher functions argument in \cite{Stein}.
We omit the details here and refer the reader to \cite{Zhang} for details.
\end{proof}

\section{Heat kernel estimates}
In this section, we construct a representation of the heat kernel and prove  the Gaussian boundedness \eqref{equ:heat}.
The study of heat kernel for the Schr\"odinger operator with Aharonov-Bohm potential has independent interest, see \cite{Ko, MOR} and reference therein.  

\begin{proposition}[Heat kernel]\label{prop:heatkernel} Let $x=r_1(\cos\theta_1,\sin\theta_1)$ and $y=r_2(\cos\theta_2,\sin\theta_2)$,
then we have the expression of heat kernel
\begin{align}\label{equ:heatkernel}
  &e^{-t\LL_{{\A},0}}(x,y)\\\nonumber
  =&\frac{e^{-\frac{|x-y|^2}{4t}} }{t}\frac{e^{i\int_{\theta_1}^{\theta_2}A(\theta')d\theta'}}{2\pi}\big(\mathbbm{1}_{[0,\pi]}(|\theta_1-\theta_2|)
  +e^{-i2\pi\alpha}\mathbbm{1}_{[\pi,2\pi]}(|\theta_1-\theta_2|)\big)\\\nonumber
  &-\frac{1}{\pi}\frac{e^{-\frac{r_1^2+r_2^2}{4t}} }{t} e^{-i\alpha(\theta_1-\theta_2)+i\int_{\theta_2}^{\theta_{1}}A(\theta') d\theta'} \int_0^\infty e^{-\frac{r_1r_2}{2t}\cosh s} \Big(\sin(|\alpha|\pi)e^{-|\alpha|s}\\\nonumber
  &\qquad +\sin(\alpha\pi)\frac{(e^{-s}-\cos(\theta_1-\theta_2+\pi))\sinh(\alpha s)-i\sin(\theta_1-\theta_2+\pi)\cosh(\alpha s)}{\cosh(s)-\cos(\theta_1-\theta_2+\pi)}\Big) ds\\\nonumber
  \triangleq&G_h(r_1,\theta_1,r_2,\theta_2)+D_h(r_1,\theta_1,r_2,\theta_2).
\end{align}
\end{proposition}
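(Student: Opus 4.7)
\textbf{Plan for the proof of Proposition \ref{prop:heatkernel}.}

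My strategy is to reduce to the pure Aharonov--Bohm case by a gauge transformation, then use the spectral decomposition from Subsection 2.1 to write the heat kernel as a series, and finally to resum that series via the Schl\"afli integral representation of modified Bessel functions, splitting the result into a ``free'' piece and a ``diffractive'' correction. First I would observe that under assumption \eqref{eq:transversal}, writing $\mathbf A(\hat x)/|x|$ in polar coordinates gives a purely angular one-form $A(\theta)\,d\theta/r^2\cdot r$, and setting $\chi(\theta):=\int_0^\theta (A(\sigma)-\Phi_{\mathbf A})\,d\sigma$ yields a single-valued phase on $\mathbb S^1$ (since $A-\Phi_{\mathbf A}$ has zero average). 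The gauge transformation $u\mapsto e^{i\chi(\theta)}u$ satisfies $e^{-i\chi}\,\mathcal L_{\mathbf A,0}\,e^{i\chi}=\mathcal L_{\mathbf A_{AB},0}$ where $\mathbf A_{AB}$ is the Aharonov--Bohm potential \eqref{ab-potential} with $\alpha=\Phi_{\mathbf A}$. Consequently
\[
e^{-t\mathcal L_{\mathbf A,0}}(x,y)=e^{i(\chi(\theta_1)-\chi(\theta_2))}\,e^{-t\mathcal L_{\mathbf A_{AB},0}}(x,y),
\]
and $\chi(\theta_1)-\chi(\theta_2)=\int_{\theta_2}^{\theta_1}(A(\theta')-\alpha)d\theta'$. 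This gauge factor will combine with the AB kernel phase to reconstitute exactly the $e^{i\int_{\theta_1}^{\theta_2}A}$ and $e^{-i\alpha(\theta_1-\theta_2)+i\int_{\theta_2}^{\theta_1}A}$ factors appearing in $G_h$ and $D_h$.

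Next I would apply the functional calculus \eqref{funct} to the AB operator. Since the angular eigenfunctions are $\psi_k(\theta)=(2\pi)^{-1/2}e^{ik\theta}$ with $\nu_k=|k-\alpha|$ for $k\in\Z$ (see Remark \ref{rem:new}), the kernel reads
\[
e^{-t\mathcal L_{\mathbf A_{AB},0}}(x,y)=\frac{1}{2\pi}\sum_{k\in\Z}e^{ik(\theta_1-\theta_2)}\int_0^\infty e^{-t\rho^2}J_{|k-\alpha|}(r_1\rho)J_{|k-\alpha|}(r_2\rho)\,\rho\,d\rho.
\]
The Weber--Schafheitlin identity $\int_0^\infty e^{-t\rho^2}J_\nu(r_1\rho)J_\nu(r_2\rho)\rho\,d\rho=\frac{1}{2t}e^{-(r_1^2+r_2^2)/(4t)}I_\nu(r_1r_2/(2t))$ reduces the problem to evaluating the series
\[
S(\phi,z):=\sum_{k\in\Z}e^{ik\phi}\,I_{|k-\alpha|}(z),\qquad z=\frac{r_1r_2}{2t},\quad \phi=\theta_1-\theta_2.
\]

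The main obstacle is the closed-form evaluation of $S(\phi,z)$. I would insert the Schl\"afli representation
\[
I_\nu(z)=\frac{1}{2\pi}\int_{-\pi}^{\pi}e^{z\cos\tau}\cos(\nu\tau)\,d\tau-\frac{\sin(\nu\pi)}{\pi}\int_0^\infty e^{-z\cosh s-\nu s}\,ds,\qquad \nu\ge 0,
\]
and interchange the sum and integral. For the first (``Bessel-type'') piece, the series $\sum_k e^{ik\phi}\cos(|k-\alpha|\tau)$ is evaluated by splitting $k\ge\lceil\alpha\rceil$ and $k\le\lfloor\alpha\rfloor$, summing two geometric series in $e^{\pm i\tau}$, and recognising a Dirac comb on $\mathbb S^1$; the integration in $\tau$ then picks out $\tau=\pm\phi$ (modulo $2\pi$), producing $e^{z\cos\phi}=e^{-|x-y|^2/(4t)}e^{r_1r_2/t}$, combined with indicators that distinguish the two ``sheets'' $|\phi|\lessgtr\pi$ and pick up the flux factor $e^{-2\pi i\alpha}$ when $|\phi|>\pi$. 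This is precisely the structure of $G_h$ after multiplying by $e^{-(r_1^2+r_2^2)/(4t)}/(2t)$. For the second piece, the same splitting of $k$ produces geometric series in $e^{-s}$ that can be summed explicitly; after simplification using $\sin(|k-\alpha|\pi)=\pm\sin(\alpha\pi)$ depending on the sign of $k-\alpha$, and rearranging the resulting Abel-type sums in terms of $\sinh(\alpha s)$, $\cosh(\alpha s)$ and the Poisson-type denominator $\cosh s-\cos(\theta_1-\theta_2+\pi)$, one obtains exactly the integrand of $D_h$. Re-attaching the gauge phase from the first step yields the claimed identity.
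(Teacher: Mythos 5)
Your proposal follows essentially the same route as the paper: spectral decomposition along the angular eigenfunctions, the Weber identity reducing the Hankel integral to a modified Bessel function $I_\nu$, Watson's (Schl\"afli) integral representation of $I_\nu$, and resummation of the angular series via a periodized delta comb for the ``geometric'' piece and geometric series for the ``diffractive'' piece. The only cosmetic difference is that you factor out the gauge phase $e^{i\chi(\theta)}$ at the outset to reduce to the constant-flux Aharonov--Bohm operator, whereas the paper keeps the $A(\theta)$-dependence inside the Laptev--Weidl eigenfunctions $\varphi_k(\theta)=(2\pi)^{-1/2}e^{-i(\theta(k+\alpha)-\int_0^\theta A)}$; the two viewpoints are equivalent and the subsequent computation is identical.
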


\begin{remark} In particular $\alpha=0$, $D_h(r_1,\theta_1,r_2,\theta_2)$ vanishes and $G_h(r_1,\theta_1,r_2,\theta_2)$ is the same to
the classical heat kernel representation in Euclidean space without potential.
\end{remark}

\begin{proof}
From \eqref{LAa-s} with $a(\theta)=0$, then $L_{{\A},0}=(i\partial_\theta+A(\theta))^2$. For simple, as \eqref{equ:defphia1}, let
\begin{equation}
 \alpha= \Phi_{\A}=\frac{1}{2\pi}\int_0^{2\pi}A(\theta)\;d\theta.
\end{equation}
Due to  unitarily equivalent of magnetic Schr\"odinger operators for $\alpha$ and $\alpha+1$, without loss of generality, we assume $\alpha\in(-1,0)\cup (0,1)$.\vspace{0.2cm}

Following \cite{LW} of Laptev-Weidl, the operator $i\partial_\theta+A(\theta)$ with domain $H^1(\mathbb{S}^1)$ in $L^2(\mathbb{S}^1)$ has eigenvalue
$\nu(k)=k+\alpha, k\in\Z$ and eigenfunction
\begin{equation}\label{eig-f}
\varphi_k(\theta)=\frac1{\sqrt{2\pi}}e^{-i\big(\theta(k+\alpha)-\int_0^{\theta}A(\theta') d\theta'\big)}.
\end{equation}
Then
$$L_{{\A},0}\varphi_k(\theta)=(k+\alpha)^2\varphi_k(\theta).$$
Let $\nu=\nu_k=|k+\alpha|$ with $k\in\Z$.
Briefly recalling the functional calculus for well-behaved functions $F$ (see \cite{Taylor}),
\begin{equation}\label{funct1}
F(\mathcal{L}_{{\A},0}) f(r,\theta)=\sum_{k\in\Z} \varphi_{k}(\theta) \int_0^\infty F(\rho^2) J_{\nu_k}(r\rho)b_{k}(\rho) \,\rho d\rho
\end{equation}
where $b_{k}(\rho)=(\mathcal{H}_{\nu_k}a_k)(\rho)$ and $f(r,\theta)=\sum\limits_{k\in\Z}a_{k}(r)\varphi_{k}(\theta)$. 
Thus, the kernel of the operator  $e^{-t\LL_{{\A},0}}$ is given by
\begin{equation}\label{equ:kerneita}
K(t,x,y)=K(t,r_1,\theta_1,r_2,\theta_2)=\sum_{k\in\Z}\varphi_{k}(\theta_1)\overline{\varphi_{k}(\theta_2)}K_{\nu_k}(t,r_1,r_2).
\end{equation}
where $K_{\nu}$ is given by 
\begin{equation}\label{equ:knukdef12}
  K_{\nu}(t,r_1,r_2)=\int_0^\infty e^{-t\rho^2}J_{\nu}(r_1\rho)J_{\nu}(r_2\rho) \,\rho d\rho=\frac{e^{-\frac{r_1^2+r_2^2}{4t}}}{t} I_\nu\big(\frac{r_1r_2}{2t}\big)
\end{equation}
where $I_\nu$ is the modified Bessel function and we use the Weber identity, e.g. \cite[Proposition 8.7]{Taylor}.
For the modified Bessel function $I_\nu$,   we use the integral representation in \cite{Watson}  to write: for $z=\frac{r_1r_2}{2t}$
\begin{equation}\label{m-bessel}
I_\nu(z)=\frac1{\pi}\int_0^\pi e^{z\cos(s)} \cos(\nu s) ds-\frac{\sin(\nu\pi)}{\pi}\int_0^\infty e^{-z\cosh s} e^{-s\nu} ds
\end{equation}
By \eqref{eig-f}, we need to consider
 \begin{equation}\label{I}
 \begin{split}&\sum_{k\in\Z} e^{-i\big((\theta_1-\theta_2)(k+\alpha)-\int_{\theta_2}^{\theta_{1}}A(\theta') d\theta'\big)} I_{\nu_k}\big(\frac{r_1r_2}{2t}\big), \\
=& e^{-i\alpha(\theta_1-\theta_2)+i\int_{\theta_2}^{\theta_{1}}A(\theta') d\theta'} \sum_{k\in\Z} e^{-ik(\theta_1-\theta_2)}  I_{\nu_k}\big(\frac{r_1r_2}{2t}\big).
 \end{split}
 \end{equation}
Recall $\nu_k=|k+\alpha|, k\in\Z$, note that on the line, we have that
\begin{equation*}
\begin{split}
\sum_{k\in\Z} \cos(s\nu) e^{-ik(\theta_1-\theta_2)}&=\sum_{k\in\Z} \frac{e^{i(k+\alpha)s}+e^{-i(k+\alpha)s}}2 e^{-ik(\theta_1-\theta_2)}\\
&=\frac12\big(e^{-i\alpha s}\delta(\theta_1-\theta_2+s)+e^{i\alpha s}\delta(\theta_1-\theta_2-s)\big).
\end{split}
\end{equation*}
 To get $\cos(s\sqrt{L_{{\A},0}})$ on $\R/(2\pi\Z)$,  by the method of image in \cite{Taylor}, we make the above periodic
 \begin{equation}
 \begin{split}
 &\cos(s\sqrt{L_{{\A},0}})\delta(\theta_1-\theta_2)\\
 =&
 \frac12\sum_{j\in\Z}\big[e^{-is\alpha}\delta(\theta_1+2j\pi-\theta_2+s)+e^{is\alpha}\delta(\theta_1+2j\pi-\theta_2-s)\big].
 \end{split}
 \end{equation}
To consider \eqref{I}, we consider the first term in $I_\nu(z)$ to obtain
\begin{align*}
&\frac1{\pi}\sum_{k\in\Z} e^{-ik(\theta_1-\theta_2)}\int_0^\pi e^{z\cos(s)} \cos(\nu_k s) ds\\\nonumber
 =&\frac1{\pi}\int_0^{\pi} e^{z\cos(s)} \cos(s\sqrt{L_{{\A},0}})\delta(\theta_1-\theta_2)\;ds\\\nonumber
 =&\frac1{2\pi}\sum_{j\in\Z}\int_0^{\pi} e^{z\cos(s)}\big[e^{-is\alpha}\delta(\theta_1-\theta_2+2j\pi+s)
 +e^{is\alpha}\delta(\theta_1-\theta_2+2j\pi-s)\big]\;ds\\\nonumber
 =&\frac1{\pi}\sum_{\{j\in\Z: 0\leq |\theta_1-\theta_2+2j\pi|\leq \pi\}} e^{z\cos(\theta_1-\theta_2+2j\pi)}e^{i(\theta_1-\theta_2+2j\pi)\alpha}
\\
 =&\frac1{\pi}\times\begin{cases}
e^{z\cos(\theta_1-\theta_2)} e^{i(\theta_1-\theta_2)\alpha}\quad&\text{if}\quad |\theta_1-\theta_2|<\pi\\
e^{z\cos(\theta_1-\theta_2)}e^{i(\theta_1-\theta_2-2\pi)\alpha}\quad&\text{if}\quad \pi<|\theta_1-\theta_2|<2\pi.
 \end{cases}
\end{align*}
Therefore the contribution of the first term is
\begin{equation}
\frac{e^{-\frac{r_1^2+r_2^2}{4t}} }{t}\frac{e^{i\int_{\theta_1}^{\theta_2}A(\theta')d\theta'}}{2\pi}e^{\frac{r_1r_2}{2t}\cos(\theta_1-\theta_2)} \big(\mathbbm{1}_{[0,\pi]}(|\theta_1-\theta_2|)+e^{-i2\pi\alpha}\mathbbm{1}_{[\pi,2\pi]}(|\theta_1-\theta_2|)\big)
\end{equation}
which gives the term $G_h(r_1,\theta_1,r_2,\theta_2)$ in \eqref{equ:heatkernel}.

We now consider the second term associated with \eqref{m-bessel}
 \begin{align}
\nonumber&\sum_{k\in\Z} e^{-ik(\theta_1-\theta_2)}\frac{\sin(\nu\pi)}{\pi}\int_0^\infty e^{-z\cosh s} e^{-s\nu} ds.
\end{align}
Recall $\nu_k=|k+\alpha|, k\in\Z$ and $\alpha\in(-1,1)\setminus\{0\}$,
then
\begin{equation}
\nu=|k+\alpha|=
\begin{cases}
k+\alpha,\qquad &k\geq1;\\
|\alpha|,\qquad &k=0;\\
-(k+\alpha),\qquad &k\leq -1.
\end{cases}
\end{equation}
Therefore we obtain
\begin{equation}
\begin{split}
\sin(\pi\nu_k)=\sin(\pi|k+\alpha|)=\begin{cases}
\cos(k\pi)\sin(\alpha\pi),\qquad &k\geq1;\\
\sin(|\alpha|\pi),\qquad &k=0;\\
-\cos(k\pi)\sin(\alpha\pi),\qquad &k\leq -1.
\end{cases}
\end{split}
\end{equation}
Therefore we furthermore have
\begin{equation}
\begin{split}
&\sum_{k\in\Z}\sin(\pi|k+\alpha|)e^{-s|k+\alpha|}e^{-ik(\theta_1-\theta_2)}\\
=&\sin(\alpha\pi)\sum_{k\geq 1} \frac{e^{ik\pi}+e^{-ik\pi}}2 e^{-s(k+\alpha)}e^{-ik(\theta_1-\theta_2)}+\sin(|\alpha|\pi)e^{-|\alpha|s}\\&-\sin(\alpha\pi)\sum_{k\leq-1} \frac{e^{ik\pi}+e^{-ik\pi}}2 e^{s(k+\alpha)}e^{-ik(\theta_1-\theta_2)}\\
=&\sin(|\alpha|\pi)e^{-|\alpha|s}+\frac{\sin(\alpha\pi)}2\Big(e^{-s\alpha}\sum_{k\geq1} e^{-ks} \big(e^{-ik(\theta_1-\theta_2+\pi)} +e^{-ik(\theta_1-\theta_2-\pi)}\big)\\&\qquad\qquad\qquad- e^{s\alpha} \sum_{k\geq 1} e^{-ks} \big(e^{ik(\theta_1-\theta_2+\pi)} +e^{ik(\theta_1-\theta_2-\pi)}\big)\Big).
\end{split}
\end{equation}
Note that
\begin{equation}
\sum_{k=1}^\infty e^{ikz}=\frac{e^{iz}}{1-e^{iz}},\qquad \mathrm{Im} z>0,
\end{equation}
we finally obtain
\begin{align}
&\sum_{k\in\Z}\sin(\pi|k+\alpha|)e^{-s|k+\alpha|}e^{-ik(\theta_1-\theta_2)}\\\nonumber
=&\sin(|\alpha|\pi)e^{-|\alpha|s}+\frac{\sin(\alpha\pi)}2\Big(\frac{e^{-(1+\alpha)s-i(\theta_1-\theta_2+\pi)}}
{1-e^{-s-i(\theta_1-\theta_2+\pi)}}+\frac{e^{-(1+\alpha)s-i(\theta_1-\theta_2-\pi)}}{1-e^{-s-i(\theta_1-\theta_2-\pi)}}\\\nonumber
&\qquad\qquad\qquad-\frac{e^{-(1-\alpha)s+i(\theta_1-\theta_2+\pi)}}{1-e^{-s+i(\theta_1-\theta_2+\pi)}}-
\frac{e^{-(1-\alpha)s+i(\theta_1-\theta_2-\pi)}}{1-e^{-s+i(\theta_1-\theta_2-\pi)}}\Big)\\\nonumber
=&\sin(|\alpha|\pi)e^{-|\alpha| s}+\sin(\alpha\pi)\frac{(e^{- s}-\cos(\theta_1-\theta_2+\pi))\sinh(\alpha s)
-i\sin(\theta_1-\theta_2+\pi)\cosh(\alpha s)}{\cosh( s)-\cos(\theta_1-\theta_2+\pi)}.
\end{align}
Therefore we obtain the contribution of the second term
\begin{equation}
\begin{split}
&-\frac{1}{\pi}\frac{e^{-\frac{r_1^2+r_2^2}{4t}} }{t}
e^{-i\alpha(\theta_1-\theta_2)+i\int_{\theta_2}^{\theta_{1}}A(\theta') d\theta'} \int_0^\infty e^{-\frac{r_1r_2}{2t}\cosh s} \Big(\sin(|\alpha|\pi)e^{-|\alpha|s} \\&+\sin(\alpha\pi)\frac{(e^{-s}-\cos(\theta_1-\theta_2+\pi))\sinh(\alpha s)-i\sin(\theta_1-\theta_2+\pi)\cosh(\alpha s)}{\cosh(s)-\cos(\theta_1-\theta_2+\pi)}\Big) ds.
\end{split}
\end{equation}

\end{proof}

\begin{proposition}[Pointwise estimate]\label{prop:pointest}
There hold
\begin{align}\label{equ:pcontr}
 |e^{-t\LL_{{\A},0}}(x,y)|\lesssim & t^{-1}e^{-\frac{|x-y|^2}{4t}}\quad \forall\;t>0,
\end{align}
and
\begin{align}\label{equ:pcontrder}
 \big|\tfrac{\pa}{\pa t}e^{-t\LL_{{\A},0}}(x,y)\big|\lesssim & t^{-2}e^{-\frac{|x-y|^2}{8t}}\quad \forall\;t>0.
\end{align}

\end{proposition}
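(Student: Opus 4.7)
The plan is to start from the explicit representation in Proposition \ref{prop:heatkernel}, $e^{-t\LL_{{\A},0}}(x,y)=G_h+D_h$, and to bound the two pieces separately. The Gaussian part $G_h$ is trivially controlled, since it already appears as $t^{-1}e^{-|x-y|^2/(4t)}$ multiplied by a unimodular factor, so $|G_h|\lesssim t^{-1}e^{-|x-y|^2/(4t)}$; an analogous direct differentiation, together with the elementary inequality $xe^{-x/2}\leq C$, gives $|\partial_t G_h|\lesssim t^{-2}e^{-|x-y|^2/(8t)}$.

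For the Aharonov-Bohm correction $D_h$, I would first combine the Gaussian factors using the identity
$$\frac{r_1^2+r_2^2}{4t}+\frac{r_1 r_2\cosh s}{2t}=\frac{|x-y|^2}{4t}+\frac{r_1 r_2(\cosh s-\cos\psi)}{2t},\qquad \psi:=\theta_1-\theta_2+\pi,$$
which follows from $\cos\psi=-\cos(\theta_1-\theta_2)$ and $|x-y|^2=r_1^2+r_2^2-2r_1r_2\cos(\theta_1-\theta_2)$. Setting $D(s,\psi):=\cosh s-\cos\psi\geq0$ and $\beta:=r_1r_2/(2t)\geq0$, the desired bound reduces to showing
$$\int_0^\infty e^{-\beta D}\Big(e^{-|\alpha|s}+\frac{|N(s,\psi)|}{D(s,\psi)}\Big)\,ds\leq C_\alpha$$
uniformly in $\beta\geq0$ and $\psi\in[0,2\pi]$, where $N=(e^{-s}-\cos\psi)\sinh(\alpha s)-i\sin\psi\cosh(\alpha s)$.

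The crucial step is the algebraic identity
$$|N|^2=2e^{-s}D\sinh^2(\alpha s)+\sin^2\psi,$$
which is obtained by expanding $|N|^2$ and using $(e^{-s}-\cos\psi)^2+\sin^2\psi=2e^{-s}D$. By $\sqrt{a+b}\leq\sqrt a+\sqrt b$ one gets $|N|/D\leq\sqrt{2e^{-s}/D}\,|\sinh(\alpha s)|+|\sin\psi|/D$. For the first term, $D\geq\cosh s-1$ shows the integrand is bounded near $s=0$ by $C|\alpha|$ and decays like $e^{-(1-|\alpha|)s}$ at infinity, integrable since $|\alpha|<1$. For the second term I would use the classical evaluation $\int_0^\infty ds/(\cosh s-\cos\psi)=(\pi-\psi)/\sin\psi$ on $\psi\in(0,\pi)$ (extended symmetrically on $(\pi,2\pi)$), which gives $\int_0^\infty|\sin\psi|/D\,ds\leq\pi$ uniformly in $\psi$. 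Together with the trivial bound for the $e^{-|\alpha|s}$ piece, this yields $|D_h|\lesssim_\alpha t^{-1}e^{-|x-y|^2/(4t)}$ and completes \eqref{equ:pcontr}.

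For the time-derivative bound \eqref{equ:pcontrder}, differentiating $t^{-1}$ contributes $-t^{-2}$, and differentiating the combined exponential $\exp(-|x-y|^2/(4t)-r_1r_2 D/(2t))$ brings down the polynomial factors $|x-y|^2/(4t^2)+r_1r_2 D/(2t^2)$; absorbing these into the Gaussians via $xe^{-x/2}\leq C$ replaces $e^{-|x-y|^2/(4t)}$ by $e^{-|x-y|^2/(8t)}$ (and $e^{-r_1r_2 D/(2t)}$ by $e^{-r_1r_2 D/(4t)}$), after which the remaining integration in $s$ is uniformly bounded by exactly the argument of Step 3. The main obstacle is the uniform control of $\int e^{-\beta D}|N|/D\,ds$: pointwise $|N|/D$ is unbounded near the antipodal configuration $(s,\psi)=(0,0)$ (at $s=0$ one gets $|\cot(\psi/2)|$), so naive majorizations give logarithmic divergences, and it is only the algebraic identity for $|N|^2$ together with the explicit integral of $1/(\cosh s-\cos\psi)$ that resolves this apparent singularity.
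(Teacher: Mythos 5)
Your proposal is correct and follows the same high-level strategy as the paper (decompose $e^{-t\LL_{{\A},0}}(x,y)=G_h+D_h$ via Proposition \ref{prop:heatkernel} and show the $s$-integral in $D_h$ is bounded uniformly in the angular parameter), but the estimation of the diffractive integral is genuinely different and worth comparing. The paper simply bounds $e^{-\frac{r_1^2+r_2^2}{4t}}e^{-\frac{r_1 r_2}{2t}\cosh s}\le e^{-\frac{(r_1+r_2)^2}{4t}}\le e^{-\frac{|x-y|^2}{4t}}$, discards the remaining $s$-dependent exponential entirely, and then checks the three integrals \eqref{equ:ream1}--\eqref{equ:ream3} by splitting $\int_0^1+\int_1^\infty$ and using $\cosh s-\cos\psi=\sinh^2(s/2)+\sin^2(\psi/2)$ term by term. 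You instead keep the exact Gaussian $e^{-|x-y|^2/(4t)}$ by absorbing the angular part into the hyperbolic factor, and then treat the numerator $N$ as a whole through the identity $|N|^2=2e^{-s}D\sinh^2(\alpha s)+\sin^2\psi$ (which indeed follows from $(e^{-s}-\cos\psi)^2+\sin^2\psi=2e^{-s}D$ and $\cosh^2=1+\sinh^2$), together with the closed-form $\int_0^\infty \frac{ds}{\cosh s-\cos\psi}=\frac{\pi-\psi}{\sin\psi}$. Your route is more economical: the algebraic identity handles both terms of $N$ at once, the $\sin\psi$ factor cancels exactly against the explicit integral, and there is no need for case splitting in $s$ or for separate treatment of the antipodal singularity $(s,\psi)\to(0,0)$, whereas the paper resolves that singularity by the $b/(s^2+b^2)$ estimate (and has a harmless typo there, writing $\sin^2(\psi/2)$ where $|\sin(\psi/2)|$ is meant; the $\arctan$ bound works either way). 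What the paper's version buys is that it does not require $\int_0^\infty ds/(\cosh s-\cos\psi)$ as an input, only elementary comparisons; what yours buys is a shorter and more transparent verification. Both arguments depend crucially on $|\alpha|<1$ for the exponential tails, and both pass to the $\partial_t$-bound in the same way, by absorbing the extra polynomial factors via $xe^{-cx}\lesssim 1$.
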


\begin{remark} The first inequality \eqref{equ:pcontr}, which is non-trivial since the components of the Aharonov-Bohm potential do not belong to $L^2_{loc}(\R^2)$, was first proved in \cite{MOR}. We provide an alternative and direct proof based on our representation formula
in Proposition \ref{prop:heatkernel}. 
To our best knowledge, the second one \eqref{equ:pcontrder} is new.
\end{remark}

\begin{proof} We only prove \eqref{equ:pcontr} since \eqref{equ:pcontrder} follows from the same argument and the fact
$$xe^{-2x}\leq e^{-x},\quad \forall\;x\geq0.$$
By Proposition \ref{prop:heatkernel}, we have
  $$e^{-t\LL_{{\A},0}}(x,y)=G_h(r_1,\theta_1,r_2,\theta_2)+D_h(r_1,\theta_1,r_2,\theta_2),$$
and
\begin{equation}\label{equ:gcontes}
  |G_h(r_1,\theta_1,r_2,\theta_2)|\lesssim t^{-1}e^{-\frac{|x-y|^2}{4t}},
\end{equation}
and
\begin{align*}
  &|D_h(r_1,\theta_1,r_2,\theta_2)|\\
  \lesssim&\frac{e^{-\frac{r_1^2+r_2^2}{4t}}}{t} \Big|\int_0^\infty e^{-\frac{r_1r_2}{2t}\cosh s} \Big(\sin(|\alpha|\pi)e^{-|\alpha|s}\\\nonumber
  &\qquad +\sin(\alpha\pi)\frac{(e^{-s}-\cos(\theta_1-\theta_2+\pi))\sinh(\alpha s)-i\sin(\theta_1-\theta_2+\pi)\cosh(\alpha s)}{\cosh(s)-\cos(\theta_1-\theta_2+\pi)}\Big) ds\Big|\\
  \lesssim&\frac{e^{-\frac{(r_1+r_2)^2}{4t}}}{t} \Big|\int_0^\infty \Big(|\sin(|\alpha|\pi)|e^{-|\alpha|s}\\\nonumber
  &\qquad +|\sin(\alpha\pi)|\cdot\Big|\frac{(e^{-s}-\cos(\theta_1-\theta_2+\pi))\sinh(\alpha s)-i\sin(\theta_1-\theta_2+\pi)\cosh(\alpha s)}{\cosh(s)-\cos(\theta_1-\theta_2+\pi)}\Big|\Big) ds\Big|.
\end{align*}
  Note that $D(r_1,\theta_1,r_2,\theta_2)=0$ when $\alpha=0$. Thus, we only need to show that for $\alpha\in(-1,1)\backslash\{0\}$
  \begin{align}\label{equ:ream1}
    \int_0^\infty e^{-|\alpha|s}\;ds\lesssim&1\\\label{equ:ream2}
    \int_0^\infty  \Big|\frac{(e^{-s}-\cos(\theta_1-\theta_2+\pi))\sinh(\alpha s)}{\cosh(s)-\cos(\theta_1-\theta_2+\pi)}\Big|\;ds\lesssim&1
    \\\label{equ:ream3}
     \int_0^\infty  \Big|\frac{\sin(\theta_1-\theta_2+\pi)\cosh(\alpha s)}{\cosh(s)-\cos(\theta_1-\theta_2+\pi)}\Big|\;ds\lesssim&1.
  \end{align}
  It is easy to check \eqref{equ:ream1}.\vspace{0.2cm}

  {\bf Estimate of \eqref{equ:ream2}:}  Note that
$$\cosh(\tau)
-\cos(\theta_1-\theta_2+\pi)=\sinh^2\big(\tfrac{\tau}2\big)+\sin^2\big(\tfrac{\theta_1-\theta_2+\pi}2\big),$$
  we get
 \begin{align*}
    & \int_0^\infty  \Big|\frac{(e^{-s}-\cos(\theta_1-\theta_2+\pi))\sinh(\alpha s)}{\cosh(s)-\cos(\theta_1-\theta_2+\pi)}\Big|\;ds\\
    =&\int_0^1 \Big|\frac{(e^{-s}-1+1-\cos(\theta_1-\theta_2+\pi))\sinh(\alpha s)}{\sinh^2\big(\tfrac{s}2\big)+\sin^2\big(\tfrac{\theta_1-\theta_2+\pi}2\big)}\Big|\;ds
     \\&\qquad +\int_1^\infty \Big|\frac{(e^{-s}-\cos(\theta_1-\theta_2+\pi))\sinh(\alpha s)}{\sinh^2\big(\tfrac{s}2\big)+\sin^2\big(\tfrac{\theta_1-\theta_2+\pi}2\big)}\Big|\;ds\\
    \lesssim&\int_0^1\frac{(s+\tfrac{\theta_1-\theta_2+\pi}{2})s}{s^2+(\tfrac{\theta_1-\theta_2+\pi}{2})^2}\;ds
    +\int_1^\infty e^{-(1-\alpha)s}\;ds\\
    \lesssim&1.
 \end{align*}

  {\bf Estimate of \eqref{equ:ream3}:} we have
  \begin{align*}
     &  \int_0^\infty  \Big|\frac{\sin(\theta_1-\theta_2+\pi)\cosh(\alpha s)}{\cosh(s)-\cos(\theta_1-\theta_2+\pi)}\Big|\;ds\\
    \lesssim& \int_0^1  \Big|\frac{\sin^2\big(\tfrac{\theta_1-\theta_2+\pi}2\big)}{\sinh^2\big(\tfrac{s}2\big)+\sin^2\big(\tfrac{\theta_1-\theta_2+\pi}2\big)}\Big|\;ds
    +\int_1^\infty \frac{\cosh(\alpha s)}{\sinh^2\big(\tfrac{s}2\big)+\sin^2\big(\tfrac{\theta_1-\theta_2+\pi}2\big)}\;ds\\
    \lesssim& \int_0^1  \frac{b}{s^2+b^2}\;ds+\int_1^\infty  e^{-(1-\alpha)s}\;ds\\
    \lesssim&1
  \end{align*}
  where $b=\sin^2\big(\tfrac{\theta_1-\theta_2+\pi}2\big).$
The proof of Proposition \ref{prop:pointest} is now complete.
\end{proof}

\section{Construction of the fundamental solution}

In this section, we construct a fundamental solution of wave equation in an Aharonov-Bohm field based on Cheeger-Taylor's argument \cite{CT1,CT2},
and then we give point-wise estimates for the  fundamental solution.

\subsection{Representation of fundamental solution}

For our purpose of establishing the fundamental solution, we need the following lemma which is given in \cite{Taylor}.
\begin{lemma}
Define the kernel
\begin{equation}\label{qu:kerenelknu}
\begin{split}
 K_{\nu}(t, r_1,r_2)&=\int_0^\infty  \frac{\sin(t\rho)}{\rho}  J_{\nu}(r_1\rho) J_{\nu}(r_2\rho)\rho d\rho
 \end{split}
\end{equation}
where $J_\nu(r)$ is the Bessel function of order $\nu>-1/2$.
Then
\begin{equation}\label{equ:askerenel}
\begin{split}
 K_{\nu}(t, r_1,r_2)&=-\frac1\pi (r_1r_2)^{-\frac12}\lim_{\epsilon\to0} \mathrm{Im}\, Q_{\nu-\frac12}\left(\frac{r_1^2+r_2^2+(\epsilon+it)^2}{2r_1r_2}\right)
\end{split}
\end{equation}
and
where $Q_{\nu-\frac12}(z)$ is a Legendre function which has the representation
\begin{equation}
Q_{\nu-\frac12}(z)=\int_{\cosh^{-1}(z)}^\infty \frac{e^{-s\nu}}{\sqrt{2\cosh(s)-2z}} ds.
\end{equation}
Furthermore, there holds
\begin{align}\label{equ:kernelk}
 & K_{\nu}(t,r_1,r_2)\\\nonumber
 =&\begin{cases}
 0,\quad&\text{if}\quad(t,r_1,r_2)\in I\\
 \frac1{\pi}\int_0^{\beta_1}\frac{\cos(\nu s)}{\sqrt{t^2-r_1^2-r_2^2+2r_1r_2\cos(s)}}\;ds\quad&\text{if}\quad (t,r_1,r_2)\in II\\
 \frac{\cos(\pi\nu)}{\pi}\int_{\beta_2}^\infty\frac{e^{-s\nu}}{\sqrt{r_1^2+r_2^2+2r_1r_2\cosh(s)-t^2}}\;ds\;\quad&\text{if}\quad (t,r_1,r_2)\in III\\
 \end{cases}
\end{align}
where
\begin{align}
I:&=\{(t,r_1,r_2)\in [0,\infty)^3: t<|r_1-r_2|\}, \\
II:&=\{(t,r_1,r_2)\in [0,\infty)^3: |r_1-r_2|<t<r_1+r_2\},\\
III:&=\{(t,r_1,r_2)\in [0,\infty)^3: r_1+r_2<t\},
\end{align}
and
\begin{equation}\label{equ:beta12}
  \beta_1=\cos^{-1}\Big(\frac{r_1^2+r_2^2-t^2}{2r_1r_2}\Big),\quad \beta_2=\cosh^{-1}\Big(\frac{t^2-r_1^2-r_2^2}{2r_1r_2}\Big).
\end{equation}
Moreover, in the region where $t>r_1+r_2$, $K_{\nu}(t,r_1,r_2)$ has another representation
\begin{equation}\label{equ:knuregioniiis}
\begin{split}
  \frac1\pi\bigg\{\int_0^\pi&\frac{\cos(s\nu)}{\sqrt{t^2-r_1^2-r_2^2+2r_1r_2\cos(s)}}\;ds\\\quad&-\sin(\pi\nu)\int_0^{\beta_2}
  \frac{e^{-s\nu}}{\sqrt{t^2-r_1^2-r_2^2-2r_1r_2\cosh(s)}}\;ds\bigg\}.
  \end{split}
\end{equation}

\end{lemma}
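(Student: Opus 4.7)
The plan is to follow the classical approach of \cite{Taylor}: first reduce the oscillatory Bessel integral to the imaginary part of a Legendre function via a Lipschitz--Hankel type identity, and then evaluate this imaginary part in each of the three regions by contour deformation.

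The starting point is the Lipschitz--Hankel integral: for $\mathrm{Re}(a)>0$ and $\nu>-1/2$,
\begin{equation*}
\int_0^\infty e^{-a\rho}J_\nu(r_1\rho)J_\nu(r_2\rho)\,d\rho
=\frac{1}{\pi\sqrt{r_1r_2}}\,Q_{\nu-\frac12}\!\left(\frac{r_1^2+r_2^2+a^2}{2r_1r_2}\right).
\end{equation*}
Setting $a=\epsilon+it$ with $\epsilon>0$ and using $\sin(t\rho)=-\mathrm{Im}\,e^{-(\epsilon+it)\rho}$, the limit $\epsilon\to 0^+$ yields exactly the representation \eqref{equ:askerenel}.

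The three cases in \eqref{equ:kernelk} are then obtained by feeding in the integral representation of $Q_{\nu-\frac12}$ and splitting according to the position of the real quantity $\chi_0:=(r_1^2+r_2^2-t^2)/(2r_1r_2)$ with respect to $\pm1$. In region $I$ we have $\chi_0>1$, so $Q_{\nu-\frac12}$ is real-analytic at $\chi_0$ and $\mathrm{Im}\,Q_{\nu-\frac12}(\chi_\epsilon)\to 0$, giving $K_\nu=0$. In region $II$ we have $\chi_0=\cos\beta_1\in(-1,1)$, and $\chi_\epsilon$ approaches $\chi_0$ from the upper half-plane; deforming the $s$-contour from $\cosh^{-1}(\chi_\epsilon)\to i\beta_1$ down to the positive real axis and parametrising the imaginary segment by $s=i\sigma$, $\sigma\in[0,\beta_1]$, replaces $e^{-s\nu}$ by $\cos(\nu\sigma)-i\sin(\nu\sigma)$ and the square root by $\sqrt{2\cos\sigma-2\cos\beta_1}$; taking the imaginary part reproduces the region $II$ formula. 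In region $III$ we have $\chi_0<-1$ and the limit hits the branch cut of $\cosh^{-1}$; after deforming onto $[\beta_2,\infty)$ the monodromy of $\sqrt{2\cosh s-2z}$ around the cut produces the $\cos(\pi\nu)$ prefactor.

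Finally, the alternative representation \eqref{equ:knuregioniiis} in region $III$ is obtained by choosing the deformation so that the contour first runs along the full imaginary segment from $0$ to $i\pi$, yielding the $\int_0^\pi$ piece, and then compensates with a real-axis integral over $[0,\beta_2]$; the $-\sin(\pi\nu)$ coefficient arises from combining the two limits of $Q_{\nu-\frac12}$ across the cut via the standard reflection identity. The chief technical obstacle is the careful bookkeeping of the branch structures of $Q_{\nu-\frac12}$, $\sqrt{\cdot}$ and $\cosh^{-1}$, and the justification of each contour deformation as $\epsilon\to 0^+$, particularly across the transition surfaces $t=|r_1-r_2|$ and $t=r_1+r_2$ where the topology of the integration path changes.
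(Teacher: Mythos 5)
The paper does not actually prove this lemma: it is introduced with ``we need the following lemma which is given in \cite{Taylor}'', and the reader is referred to Taylor's book (and ultimately to Cheeger--Taylor \cite{CT1}) for the argument. Your sketch correctly reconstructs that argument. The Lipschitz--Hankel integral does indeed give
$$\int_0^\infty e^{-a\rho}J_\nu(r_1\rho)J_\nu(r_2\rho)\,d\rho=\frac{1}{\pi\sqrt{r_1r_2}}\,Q_{\nu-\frac12}\!\left(\frac{r_1^2+r_2^2+a^2}{2r_1r_2}\right),\qquad\mathrm{Re}\,a>0,$$
and combining this with $\sin(t\rho)=-\lim_{\epsilon\to0^+}\mathrm{Im}\,e^{-(\epsilon+it)\rho}$ (justified by dominated convergence using the $O(\rho^{-1})$ decay of $J_\nu J_\nu$) yields \eqref{equ:askerenel}. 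Splitting on the position of $\chi_0=(r_1^2+r_2^2-t^2)/(2r_1r_2)$ relative to $\pm1$ and deforming the contour in the Heine representation of $Q_{\nu-1/2}$ is exactly the route taken in Cheeger--Taylor, and your region $I$ and region $II$ computations are clean: real-analyticity of $Q_{\nu-1/2}$ for $\chi_0>1$ kills the imaginary part, and the substitution $s=i\sigma$ with $\cosh(i\sigma)=\cos\sigma$ on the imaginary leg of the contour produces the region $II$ formula (after checking that the real-axis leg contributes a purely real quantity). For region $III$ and the alternative representation \eqref{equ:knuregioniiis}, you gesture at the monodromy argument producing the $\cos(\pi\nu)$ prefactor and the reflection identity producing the $-\sin(\pi\nu)$ term, but do not carry out the branch-of-$\cosh^{-1}$ and sign-of-$\sqrt{\cdot}$ bookkeeping needed to pin down these coefficients; that is the delicate part of the proof and the one point where your sketch remains a sketch. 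Since the paper itself supplies no proof and defers entirely to the reference, your outline is faithful to the standard argument and at least as complete as the paper's treatment; to turn it into a full proof you would need to work out the two cuts in region $III$ explicitly and verify the consistency of the two region-$III$ representations.
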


Our main result of this section is the following.
\begin{proposition}\label{prop:kernel}  Let $K(t,x,y)$ be the Schwartz kernel of the operator  $\frac{\sin(t\sqrt{\LL_{{\A},0}})}{\sqrt{\LL_{{\A},0}}}$. Suppose $x=r_1(\cos\theta_1,\sin\theta_1)$
and $y=r_2(\cos\theta_2,\sin\theta_2)$ and define
\begin{equation}\label{gamma}
\gamma=\frac{r_1^2+r_2^2-t^2}{2r_1r_2}=\frac{(r_1+r_2)^2-t^2}{2r_1r_2}-1=\frac{(r_1-r_2)^2-t^2}{2r_1r_2}+1
\end{equation}
and
\begin{equation}\label{beta}
\beta_1= \cos^{-1}\big(\frac{r_1^2+r_2^2-t^2}{2r_1r_2}), \quad \beta_2=\cosh^{-1}\big(\frac{t^2-r_1^2-r_2^2}{2r_1r_2}\big).
\end{equation}
Then when $t\geq 0$, the kernel can be written as  a “geometric” term $G(t,r_1,\theta_1,r_2,\theta_2)$ and a “diffractive” term $D(t,r_1,\theta_1,r_2,\theta_2)$
\begin{equation}\label{equ:kerenel}
\begin{split}
K(t,x,y)=K(t,r_1,\theta_1,r_2,\theta_2)=G_w(t,r_1,\theta_1,r_2,\theta_2)+D_w(t,r_1,\theta_1,r_2,\theta_2)
\end{split}
\end{equation}
where
\begin{equation}\label{equ:kerenelG}
\begin{split}
&G_w(t,r_1,\theta_1,r_2,\theta_2)\\
=&\frac1{2\pi}\Big(t^2-(r_1^2+r_2^2-2r_1r_2\cos(\theta_1-\theta_2))\Big)^{-1/2}
e^{i\int_{\theta_2}^{\theta_{1}}\alpha(\theta') d\theta'} \\ &\times \Big\{\mathbbm{1}_{(|r_1-r_2|, r_1+r_2)}(t)
\big[\mathbbm{1}_{[0,\beta_1]}(|\theta_1-\theta_2|)+e^{-2\alpha\pi i}\mathbbm{1}_{[2\pi-\beta_1,2\pi]}(|\theta_1-\theta_2|)\big]\\
&\qquad+\mathbbm{1}_{(r_1+r_2,\infty)}(t)\big[\mathbbm{1}_{[0,\pi]}(|\theta_1-\theta_2|)+e^{-2\alpha\pi i} \mathbbm{1}_{[\pi,2\pi]}(|\theta_1-\theta_2|)\big]\Big\}
\end{split}
\end{equation}
and
\begin{equation}\label{equ:kerenelD}
\begin{split}
&D_w(t,r_1,\theta_1,r_2,\theta_2)=
\frac{\mathbbm{1}_{(r_1+r_2,\infty)}(t)}\pi e^{-i\big(\alpha(\theta_1-\theta_2)-\int_{\theta_2}^{\theta_{1}}\alpha(\theta') d\theta'\big)}\\&\quad\times \int_0^{\beta_2}\Big(t^2-r_1^2-r_2^2-2r_1r_2\cosh s\Big)^{-1/2}
\Big(\sin(|\alpha|\pi)e^{-|\alpha|s}\\&+\sin(\alpha\pi) \frac{(e^{-s}-\cos(\theta_1-\theta_2+\pi))\sinh(\alpha s)+i\sin(\theta_1-\theta_2+\pi)\cosh(\alpha s)}{\cosh(s)
-\cos(\theta_1-\theta_2+\pi)}\Big)ds.
\end{split}
\end{equation}
When $t\leq0$, the similar conclusion hold for \eqref{equ:kerenelG} and \eqref{equ:kerenelD} with replacing $t$ by $-t$.
\end{proposition}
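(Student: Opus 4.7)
The strategy mirrors the heat-kernel construction in Proposition \ref{prop:heatkernel}, with the Weber identity replaced by the Legendre-function formulas \eqref{equ:kernelk} and \eqref{equ:knuregioniiis} for the one-dimensional kernel $K_\nu$ of \eqref{qu:kerenelknu}. The starting point is the functional-calculus identity \eqref{funct1}, which gives
\[
\tfrac{\sin(t\sqrt{\LL_{\A,0}})}{\sqrt{\LL_{\A,0}}}(x,y)=\sum_{k\in\Z}\varphi_k(\theta_1)\overline{\varphi_k(\theta_2)}\,K_{\nu_k}(t,r_1,r_2),
\]
with $\varphi_k$ from \eqref{eig-f} and $\nu_k=|k+\alpha|$. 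Factoring the overall phase $\tfrac{1}{2\pi}e^{-i\alpha(\theta_1-\theta_2)+i\int_{\theta_2}^{\theta_1}A(\theta')\,d\theta'}$, the task reduces to evaluating $\sum_{k\in\Z} e^{-ik(\theta_1-\theta_2)}K_{\nu_k}(t,r_1,r_2)$.

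Next I split into the three regions $I,II,III$ of \eqref{equ:kernelk}. Region $I$ gives $K_{\nu_k}\equiv 0$, consistent with finite propagation. In region $II$, each $K_{\nu_k}$ is a $\cos(\nu_k s)$-integral on $[0,\beta_1]$. Interchanging sum and integral and using the image-method identity
\[
\sum_{k\in\Z}\cos(\nu_k s)\,e^{-ik(\theta_1-\theta_2)}=\tfrac{1}{2}\sum_{j\in\Z}\bigl[e^{-is\alpha}\delta(\theta_1-\theta_2+2j\pi+s)+e^{is\alpha}\delta(\theta_1-\theta_2+2j\pi-s)\bigr],
\]
already established in the proof of Proposition \ref{prop:heatkernel}, collapses the integral to $s=|\theta_1-\theta_2|$ or $s=2\pi-|\theta_1-\theta_2|$ whenever these lie in $[0,\beta_1]$. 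The two surviving delta masses, multiplied by the geometric weight $(t^2-r_1^2-r_2^2+2r_1r_2\cos s)^{-1/2}$, produce the indicators $\mathbbm{1}_{[0,\beta_1]}(|\theta_1-\theta_2|)$ and $e^{-2\pi i\alpha}\mathbbm{1}_{[2\pi-\beta_1,2\pi]}(|\theta_1-\theta_2|)$ appearing in the first line of $G_w$.

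In region $III$ I would use the alternative representation \eqref{equ:knuregioniiis}, which splits $K_{\nu_k}$ into a cosine-integral over $[0,\pi]$ plus a $\sin(\pi\nu_k)e^{-s\nu_k}$-integral over $[0,\beta_2]$. The cosine part is treated by the same image identity, but now $s$ ranges over all of $[0,\pi]$, so the delta masses are always activated for every $|\theta_1-\theta_2|\in(0,2\pi)$, yielding the second line of $G_w$. For the second piece one needs
\[
\sum_{k\in\Z}\sin(\pi|k+\alpha|)\,e^{-s|k+\alpha|}\,e^{-ik(\theta_1-\theta_2)},
\]
which is precisely the geometric-series computation carried out in the proof of Proposition \ref{prop:heatkernel}; it produces the bracketed expression inside $D_w$. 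Multiplying by $(t^2-r_1^2-r_2^2-2r_1r_2\cosh s)^{-1/2}$ and integrating over $[0,\beta_2]$ yields $D_w$. The case $t\le 0$ follows from the oddness of $\sin$ in $t$.

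The main obstacle is the bookkeeping of which delta from which image $j$ falls into which sub-interval of $[0,\beta_1]$, $[0,\pi]$, or $[\pi,2\pi]$, and the verification that the phase $e^{is\alpha}$ at $s=2\pi-|\theta_1-\theta_2|$ combines with the overall prefactor to produce exactly the Aharonov--Bohm monodromy factor $e^{-2\pi i\alpha}$ on the "far" arc. A secondary technical point is justifying the interchange of sum and integral: for the $\sin(\pi\nu_k)e^{-s\nu_k}$-piece this is immediate from uniform exponential decay in $k$, while for the cosine part it is a distributional statement that can be obtained by inserting a mollifier $e^{-\epsilon\nu_k}$ and passing to the limit $\epsilon\to 0^+$, as in Taylor's treatment of the classical wave kernel on the cone.
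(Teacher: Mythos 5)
Your proposal follows the paper's own proof essentially step for step: the same functional-calculus expansion \eqref{funct1}, the same three-region splitting of $K_\nu$ via \eqref{equ:kernelk} and \eqref{equ:knuregioniiis}, the same image-method identity for the $\cos(\nu_k s)$-sum, and the same geometric-series evaluation of $\sum_k\sin(\pi|k+\alpha|)e^{-s|k+\alpha|}e^{-ik(\theta_1-\theta_2)}$ carried over from the heat-kernel proof. The approach and key ingredients coincide with the paper's; the technical remarks you flag (bookkeeping of delta masses, interchange of sum and integral via mollification) are sensible but do not constitute a different route.
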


\begin{remark}\label{rem:a0} If $\alpha=0$, the Diffractive term $D$ vanishes and the geometric term $G$ consists with the fundamental solution of wave equation without potential
\begin{equation}
\frac{\sin(t\sqrt{-\Delta})}{\sqrt{-\Delta}}(x,y)=\frac1{2\pi}H(t^2-|x-y|^2)(t^2-|x-y|^2)^{-1/2}
\end{equation}
where $H$ is the Heaviside step function on $\R$. Indeed, in the coordinator $x=r_1(\cos\theta_1,\sin\theta_1)$ and $y=r_2(\cos\theta_2,\sin\theta_2)$, we see
$$|x-y|^2=r_1^2+r_2^2-2r_1r_2\cos(\theta_1-\theta_2).$$ Hence we only need to claim
\begin{equation}\label{equ:claim123}
\begin{split}
&\mathbbm{1}_{\{t^2> r_1^2+r_2^2-2r_1r_2\cos(\theta_1-\theta_2)\}}(t)\\=
&\mathbbm{1}_{(|r_1-r_2|, r_1+r_2)}(t)
\big(\mathbbm{1}_{[0,\beta_1]}(|\theta_1-\theta_2|)+\mathbbm{1}_{[2\pi-\beta_1,2\pi]}(|\theta_1-\theta_2|)\big)\\
&+\mathbbm{1}_{(r_1+r_2,\infty)}(t)\big(\mathbbm{1}_{[0,\pi]}(|\theta_1-\theta_2|)+\mathbbm{1}_{[\pi,2\pi]}(|\theta_1-\theta_2|)\big).
\end{split}
\end{equation}
On the one hand, note $\theta_1,\theta_2\in [0,2\pi]$, then
$$|\theta_1-\theta_2|\leq 2\pi.$$
which implies
$$\mathbbm{1}_{(r_1+r_2,\infty)}(t)\big(\mathbbm{1}_{[0,\pi]}(|\theta_1-\theta_2|)+\mathbbm{1}_{[\pi,2\pi]}(|\theta_1-\theta_2|)=\mathbbm{1}_{(r_1+r_2,\infty)}(t).$$
On the other hand, we observe that
$$t^2> r_1^2+r_2^2-2r_1r_2\cos(\theta_1-\theta_2)\geq (r_1-r_2)^2.$$
Thus it suffices to verify that the two sets
$$\{ t^2> r_1^2+r_2^2-2r_1r_2\cos(\theta_1-\theta_2)\}\iff \{|\theta_1-\theta_2|\in [0,\beta_1]\cup [2\pi-\beta_1,2\pi]\} $$
provided $|r_1-r_2|<t<r_1+r_2$ and $|\theta_1-\theta_2|\leq 2\pi$. We verify this by noticing $\beta_1\in [0,\pi]$ given in \eqref{beta}.
\end{remark}

\begin{proof} We follows the similar argument of Proposition \ref{prop:heatkernel}, but we need to modify the details. 
Let $\nu=\nu_k=|k+\alpha|$ with $k\in\Z$, then the kernel of the operator  $\frac{\sin(t\sqrt{\LL_{{\A},0}})}{\sqrt{\LL_{{\A},0}}}$
\begin{equation}\label{equ:kernsina}
K(t,x,y)=K(t,r_1,\theta_1,r_2,\theta_2)=\sum_{k\in\Z}\varphi_{k}(\theta_1)\overline{\varphi_{k}(\theta_2)}K_{\nu_k}(t,r_1,r_2).
\end{equation}
where $\varphi_{k}$ is the eigenfunction in \eqref{eig-f} and now $K_{\nu}$ is replaced by \eqref{qu:kerenelknu}
\begin{equation}\label{equ:knukdef1}
  K_{\nu}(t,r_1,r_2)=\int_0^\infty \frac{\sin(t\rho)}{\rho} J_{\nu}(r_1\rho)J_{\nu}(r_2\rho) \,\rho d\rho.
\end{equation}
Using \eqref{equ:kernelk}, we write
\begin{equation}
 K_{\nu}(t, r_1,r_2)=K^{I}_{\nu}(t, r_1,r_2)+K^{II}_{\nu}(t, r_1,r_2)+K^{III}_{\nu}(t, r_1,r_2).
 \end{equation}
 where $K^{I}, K^{II}$ and $K^{III}$ are defined in the region $I,II$ and $III$ respectively.
Due to $K^{I}_{\nu}(t, r_1,r_2)=0$, we only consider
 \begin{equation}\label{KIIKIII}
 \begin{split}&\sum_{k\in\Z} \frac1{2\pi}e^{-i\big((\theta_1-\theta_2)(k+\alpha)-\int_{\theta_2}^{\theta_{1}}A(\theta') d\theta'\big)} (K^{II}_{\nu_k}(t, r_1,r_2)+K^{III}_{\nu_k}(t, r_1,r_2)), \\
=&\frac1{2\pi} e^{-i\alpha(\theta_1-\theta_2)+i\int_{\theta_2}^{\theta_{1}}A(\theta') d\theta'} \sum_{k\in\Z} e^{-ik(\theta_1-\theta_2)}  (K^{II}_{\nu_k}+K^{III}_{\nu_k}).
 \end{split}
 \end{equation}
Recall $\nu=|k+\alpha|, k\in\Z$, note that on the line, we have that
\begin{equation}
\begin{split}
\sum_{k\in\Z} \cos(s\nu) e^{-ik(\theta_1-\theta_2)}&=\sum_{k\in\Z} \frac{e^{i(k+\alpha)s}+e^{-i(k+\alpha)s}}2 e^{-ik(\theta_1-\theta_2)}\\
&=\frac12\big(e^{-i\alpha s}\delta(\theta_1-\theta_2+s)+e^{i\alpha s}\delta(\theta_1-\theta_2-s)\big).
\end{split}
\end{equation}
 To get $\cos(s\sqrt{L_{{\A},0}})$ on $\R/(2\pi\Z)$,  by the method of image in \cite{Taylor}, we make the above periodic
 \begin{equation}
 \begin{split}
& \cos(s\sqrt{L_{{\A},0}})\delta(\theta_1-\theta_2)\\=&
 \frac12\sum_{j\in\Z}\big[e^{-is\alpha}\delta(\theta_1+2j\pi-\theta_2+s)+e^{is\alpha}\delta(\theta_1+2j\pi-\theta_2-s)\big].
 \end{split}
 \end{equation}

Therefore, if $|r_1-r_2|<t<r_1+r_2$, by \eqref{equ:kernelk}, we obtain
\begin{align*}
&\sum_{k\in\Z} e^{ik(\theta_1-\theta_2)}K^{II}_{\nu(k)}(t, r_1,r_2)\\\nonumber
= & \frac1{2\pi}\sum_{k\in\Z}e^{-ik(\theta_1-\theta_2)} \int_0^{\beta_1}\frac{\cos(\nu_k s)}{\sqrt{t^2-r_1^2-r_2^2+2r_1r_2\cos(s)}}\;ds\\\nonumber
 =&\int_0^{\beta_1}\frac{\cos(s\sqrt{L_{{\A},0}})\delta(\theta_1-\theta_2)}{\sqrt{t^2-r_1^2-r_2^2+2r_1r_2\cos(s)}}\;ds\\\nonumber
 =&\frac12\sum_{j\in\Z}\int_0^{\beta_1}\frac{1}{\sqrt{t^2-r_1^2-r_2^2+2r_1r_2\cos(s)}}\\
 &\qquad\qquad\times\big[e^{-is\alpha}\delta(\theta_1-\theta_2+2j\pi+s)
 +e^{is\alpha}\delta(\theta_1-\theta_2+2j\pi-s)\big]\;ds\\\nonumber
 =&\sum_{\{j\in\Z: 0\leq |\theta_1-\theta_2+2j\pi|\leq \beta_1\}} \big[t^2-r_1^2-r_2^2+2r_1r_2\cos(\theta_1-\theta_2+2j\pi)\big]^{-\frac12}e^{i(\theta_1-\theta_2+2j\pi)\alpha}
\\
 =&\begin{cases}0\quad&\text{if}\quad \beta_1<|\theta_1-\theta_2|<2\pi-\beta_1\\
 \big[t^2-r_1^2-r_2^2+2r_1r_2\cos(\theta_1-\theta_2)\big]^{-\frac12}e^{i(\theta_1-\theta_2)\alpha}\quad&\text{if}\quad |\theta_1-\theta_2|<\beta_1\\
 \big[t^2-r_1^2-r_2^2+2r_1r_2\cos(\theta_1-\theta_2)\big]^{-\frac12}e^{i(\theta_1-\theta_2-2\pi)\alpha}\quad&\text{if}\quad 2\pi-\beta_1<|\theta_1-\theta_2|<2\pi.
 \end{cases}
\end{align*}
By multiplying $\frac1{2\pi} e^{-i\alpha(\theta_1-\theta_2)+i\int_{\theta_2}^{\theta_{1}}A(\theta') d\theta'}$, it leads to the term in $G$
\begin{equation}\label{equ:kerenelG1}
\begin{split}
&\frac1{2\pi}\Big(t^2-(r_1^2+r_2^2-2r_1r_2\cos(\theta_1-\theta_2))\Big)^{-1/2}
e^{i\int_{\theta_2}^{\theta_{1}}\alpha(\theta') d\theta'} \\ &\times\mathbbm{1}_{(|r_1-r_2|, r_1+r_2)}(t)
\big[\mathbbm{1}_{[0,\beta_1]}(|\theta_1-\theta_2|)+e^{-2\alpha\pi i}\mathbbm{1}_{[2\pi-\beta_1,2\pi]}(|\theta_1-\theta_2|)\big].
\end{split}
\end{equation}
From \eqref{equ:knuregioniiis}, we now consider
 \begin{align}
\nonumber&\sum_{k\in\Z} e^{-ik(\theta_1-\theta_2)}K^{III}_{\nu_k}(t, r_1,r_2)\\ \label{KIII1}
=& \frac{1}{2\pi}\sum_{k\in\Z}e^{-ik(\theta_1-\theta_2)}\int_0^\pi\frac{\cos(s\nu_k)}{\sqrt{t^2-r_1^2-r_2^2+2r_1r_2\cos(s)}}\;ds\\\label{KIII2}
   &+\frac{1}{2\pi}\sum_{k\in\Z}e^{-ik(\theta_1-\theta_2)}\sin(\pi\nu_k)\int_0^{\beta_2}
  \frac{e^{-s\nu_k}}{\sqrt{t^2-r_1^2-r_2^2-2r_1r_2\cosh(s)}}\;ds.
\end{align}
By using the same argument as in $K^{II}_{\nu}$ and multiplying $\frac1{2\pi} e^{-i\alpha(\theta_1-\theta_2)+i\int_{\theta_2}^{\theta_{1}}A(\theta') d\theta'}$ with \eqref{KIII1},
we have the term in $G_w$ such that
\begin{equation}\label{equ:kerenelG2}
\begin{split}
&\frac1{2\pi}\Big(t^2-(r_1^2+r_2^2-2r_1r_2\cos(\theta_1-\theta_2))\Big)^{-1/2}
e^{i\int_{\theta_2}^{\theta_{1}}\alpha(\theta') d\theta'} \\ &\times \mathbbm{1}_{(r_1+r_2,\infty)}(t)\big(\mathbbm{1}_{[0,\pi]}(|\theta_1-\theta_2|)+e^{-2\alpha\pi i} \mathbbm{1}_{[\pi,2\pi]}(|\theta_1-\theta_2|)\big).
\end{split}
\end{equation}
Therefore it remains to consider \eqref{KIII2}.
Recall $\nu_k=|k+\alpha|, k\in\Z$ and by unitary equivalence we can reduce matters to consider the case $\alpha\in(-1,1)\setminus\{0\}$,
then
\begin{equation}
\nu=|k+\alpha|=
\begin{cases}
k+\alpha,\qquad &k\geq1;\\
|\alpha|,\qquad &k=0;\\
-(k+\alpha),\qquad &k\leq -1.
\end{cases}
\end{equation}
Therefore we obtain
\begin{equation}
\begin{split}
\sin(\pi\nu_k)=\sin(\pi|k+\alpha|)=\begin{cases}
\cos(k\pi)\sin(\alpha\pi),\qquad &k\geq1;\\
\sin(|\alpha|\pi),\qquad &k=0;\\
-\cos(k\pi)\sin(\alpha\pi),\qquad &k\leq -1.
\end{cases}
\end{split}
\end{equation}
Therefore we furthermore have
\begin{equation}
\begin{split}
&\sum_{k\in\Z}\sin(\pi|k+\alpha|)e^{-s|k+\alpha|}e^{-ik(\theta_1-\theta_2)}\\
=&\sin(\alpha\pi)\sum_{k\geq 1} \frac{e^{ik\pi}+e^{-ik\pi}}2 e^{-s(k+\alpha)}e^{-ik(\theta_1-\theta_2)}+\sin(|\alpha|\pi)e^{-|\alpha|s}\\&-\sin(\alpha\pi)\sum_{k\leq-1} \frac{e^{ik\pi}+e^{-ik\pi}}2 e^{s(k+\alpha)}e^{-ik(\theta_1-\theta_2)}\\
=&\sin(|\alpha|\pi)e^{-|\alpha|s}+\frac{\sin(\alpha\pi)}2\Big(e^{-s\alpha}\sum_{k\geq1} e^{-ks} \big(e^{-ik(\theta_1-\theta_2+\pi)} +e^{-ik(\theta_1-\theta_2-\pi)}\big)\\&\qquad\qquad\qquad- e^{s\alpha} \sum_{k\geq 1} e^{-ks} \big(e^{ik(\theta_1-\theta_2+\pi)} +e^{ik(\theta_1-\theta_2-\pi)}\big)\Big).
\end{split}
\end{equation}
Note that
\begin{equation}
\sum_{k=1}^\infty e^{ikz}=\frac{e^{iz}}{1-e^{iz}},\qquad \mathrm{Im} z>0,
\end{equation}
we finally obtain
\begin{align}
&\sum_{k\in\Z}\sin(\pi|k+\alpha|)e^{-s|k+\alpha|}e^{-ik(\theta_1-\theta_2)}\\\nonumber
=&\sin(|\alpha|\pi)e^{-|\alpha|s}+\frac{\sin(\alpha\pi)}2\Big(\frac{e^{-(1+\alpha)s-i(\theta_1-\theta_2+\pi)}}
{1-e^{-s-i(\theta_1-\theta_2+\pi)}}+\frac{e^{-(1+\alpha)s-i(\theta_1-\theta_2-\pi)}}{1-e^{-s-i(\theta_1-\theta_2-\pi)}}\\\nonumber
&\qquad\qquad\qquad-\frac{e^{-(1-\alpha)s+i(\theta_1-\theta_2+\pi)}}{1-e^{-s+i(\theta_1-\theta_2+\pi)}}-
\frac{e^{-(1-\alpha)s+i(\theta_1-\theta_2-\pi)}}{1-e^{-s+i(\theta_1-\theta_2-\pi)}}\Big)\\\nonumber
=&\sin(|\alpha|\pi)e^{-|\alpha| s}+\sin(\alpha\pi)\frac{(e^{- s}-\cos(\theta_1-\theta_2+\pi))\sinh(\alpha s)
-i\sin(\theta_1-\theta_2+\pi)\cosh(\alpha s)}{\cosh( s)-\cos(\theta_1-\theta_2+\pi)}
\end{align}
which gives the {\it diffractive} term $D_w$.

\end{proof}

\subsection{Pointwise estimates} In this subsection, we prove the kernel estimates.

\begin{proposition}\label{prop:estimadterm} Let $G_w(t,r_1,\theta_1,r_2,\theta_2)$ be in \eqref{equ:kerenelG} and $D_w(t,r_1,\theta_1,r_2,\theta_2)$ be in \eqref{equ:kerenelD}.
Then, in the polar coordinates $x=r_1(\cos\theta_1,\sin\theta_1), y=r_2(\cos\theta_2,\sin\theta_2)$, the following estimates hold:
\begin{equation}\label{equ:estimg}
|G_w(t,r_1,\theta_1,r_2,\theta_2)|\lesssim \frac{1}{\sqrt{t^2-|x-y|^2}},\qquad t^2>|x-y|^2
\end{equation}
and
\begin{equation}\label{equ:estimd}
|D_w(t,r_1,\theta_1,r_2,\theta_2)|\lesssim \frac{1}{\sqrt{t^2-(r_1+r_2)^2}},\qquad t^2>(r_1+r_2)^2.
\end{equation}

\end{proposition}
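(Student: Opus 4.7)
The bound for $G_w$ is essentially algebraic. Since $|x-y|^2 = r_1^2+r_2^2-2r_1r_2\cos(\theta_1-\theta_2)$ in polar coordinates, and every factor multiplying $(t^2-|x-y|^2)^{-1/2}$ in \eqref{equ:kerenelG} is either unimodular or indicator-valued, it suffices to verify $t^2>|x-y|^2$ on the support of each indicator. For the piece supported on $\{|r_1-r_2|<t<r_1+r_2,\;|\theta_1-\theta_2|\leq\beta_1\}$, the defining identity $\cos\beta_1=(r_1^2+r_2^2-t^2)/(2r_1r_2)$ gives $\cos(\theta_1-\theta_2)\geq\cos\beta_1$, hence $r_1^2+r_2^2-2r_1r_2\cos(\theta_1-\theta_2)\leq t^2$; the companion set $|\theta_1-\theta_2|\in[2\pi-\beta_1,2\pi]$ follows by $2\pi$-periodicity of cosine, and on $\{t>r_1+r_2\}$ the trivial inequality $r_1^2+r_2^2-2r_1r_2\cos(\theta_1-\theta_2)\leq(r_1+r_2)^2<t^2$ closes the argument.

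For the diffractive term I exploit the algebraic identity
\begin{equation*}
t^2-r_1^2-r_2^2-2r_1r_2\cosh s \;=\; 2r_1r_2(\cosh\beta_2-\cosh s) \;=\; 4r_1r_2\bigl(\sinh^2(\beta_2/2)-\sinh^2(s/2)\bigr),
\end{equation*}
and introduce the substitution $\sinh(s/2)=\sinh(\beta_2/2)\sin\psi$, with $\psi\in[0,\pi/2]$. This linearises the singular denominator:
\begin{equation*}
\sqrt{t^2-r_1^2-r_2^2-2r_1r_2\cosh s}=\sqrt{t^2-(r_1+r_2)^2}\,\cos\psi,\qquad ds=\frac{2\sinh(\beta_2/2)\cos\psi}{\sqrt{1+k\sin^2\psi}}\,d\psi,
\end{equation*}
with $k:=\sinh^2(\beta_2/2)=(t^2-(r_1+r_2)^2)/(4r_1r_2)$. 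The two $\cos\psi$ factors cancel and one obtains
\begin{equation*}
\int_0^{\beta_2}\frac{f(s)\,ds}{\sqrt{t^2-r_1^2-r_2^2-2r_1r_2\cosh s}}=\frac{1}{\sqrt{r_1r_2}}\int_0^{\pi/2}\frac{f(s(\psi))}{\sqrt{1+k\sin^2\psi}}\,d\psi,
\end{equation*}
so that \eqref{equ:estimd} is equivalent to the bound $\int_0^{\pi/2}|\mathrm{stuff}(s(\psi),\theta)|(1+k\sin^2\psi)^{-1/2}d\psi\lesssim k^{-1/2}$ uniform in $\theta$.

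The main obstacle is to establish this last bound without a spurious logarithmic factor: the purely geometric integral $\int_0^{\pi/2}(1+k\sin^2\psi)^{-1/2}d\psi$ behaves like $k^{-1/2}\log(1+k)$ for $k\gg1$, and in addition the integrand is not uniformly bounded in $\theta$, since at $s=0$ the ratio in \eqref{equ:kerenelD} contains a $\cot(\phi/2)$-singularity with $\phi=\theta_1-\theta_2+\pi$. I would resolve both issues simultaneously by decomposing stuff into the three summands in \eqref{equ:kerenelD} and controlling the angular factors through the inequality $|\sin(\phi/2)|/(\sinh^2(s/2)+\sin^2(\phi/2))\leq(2\sinh(s/2))^{-1}$ already used in the proof of Proposition \ref{prop:pointest}; this trades the $\phi$-singularity for an integrable $s^{-1}$-singularity near the origin, while the ratios $\sinh(\alpha s)/\cosh s$, $\cosh(\alpha s)/\cosh s$ decay like $e^{-(1-|\alpha|)s}$ as $s\to\infty$ thanks to $|\alpha|<1$. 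Splitting the $\psi$-integral at $\psi_\star\sim k^{-1/2}$ and combining these decay and angular estimates absorbs the logarithm and produces the required bound uniform in $\theta$.
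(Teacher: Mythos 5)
Your treatment of $G_w$ is fine and matches the paper's (both reduce to the free fundamental solution via Remark~\ref{rem:a0}).

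For $D_w$, your change of variables $\sinh(s/2)=\sinh(\beta_2/2)\sin\psi$ is correct: it cleanly linearises the singularity and, after the Jacobian cancels the $\cos\psi$, reduces \eqref{equ:estimd} to the target bound
\begin{equation*}
\int_0^{\pi/2}\frac{|\mathrm{stuff}(s(\psi),\theta)|}{\sqrt{1+k\sin^2\psi}}\,d\psi\lesssim k^{-1/2},\qquad k=\sinh^2(\beta_2/2)=\frac{t^2-(r_1+r_2)^2}{4r_1r_2},
\end{equation*}
which is equivalent to the paper's reduction by the identity $(r_1r_2)^{-1/2}=2\sinh(\beta_2/2)/\sqrt{t^2-(r_1+r_2)^2}$. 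Up to this point the route is genuinely different in presentation --- the paper works directly in the $\tau$-variable with the factorisation $\cosh\beta_2-\cosh\tau=2\sinh\tfrac{\beta_2+\tau}{2}\sinh\tfrac{\beta_2-\tau}{2}$ and a case analysis $\beta_2\lessgtr1$, while your substitution makes the singular structure and the precise scale $k^{-1/2}$ transparent --- but it is mathematically equivalent.

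The gap is in your treatment of the third summand of \eqref{equ:kerenelD}. You propose to control the angular factor via the AM--GM inequality
\begin{equation*}
\frac{|\sin(\phi/2)|}{\sinh^2(s/2)+\sin^2(\phi/2)}\leq\frac{1}{2\sinh(s/2)},\qquad \phi=\theta_1-\theta_2+\pi,
\end{equation*}
and assert this yields an \emph{integrable} $s^{-1}$-singularity at the origin. But $s^{-1}$ is not integrable at $s=0$: after the substitution $s(\psi)\approx2\sqrt{k}\psi$ for small $\psi$ and the fact $\sqrt{1+k\sin^2\psi}\approx1$ on $\psi\lesssim k^{-1/2}$, the contribution from a neighbourhood of $\psi=0$ under this bound is $\int_0^{\psi_\star}\frac{d\psi}{2\sqrt{k}\psi}$, which diverges logarithmically at the lower endpoint for every choice of $\psi_\star$. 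The AM--GM step is too lossy precisely in the regime $\sinh(s/2)\ll|\sin(\phi/2)|$, where the true ratio is bounded by $|\sin(\phi/2)|^{-1}$ (finite for fixed $\phi$) rather than blowing up. The fix is the one the paper uses: keep the $\phi$-dependence and exploit that $\int_0^\infty\frac{b}{\tau^2+b^2}\,d\tau=\pi/2$ uniformly in $b=|\sin(\phi/2)|$, as in estimate \eqref{equ:cosonlyaf}. Once you replace your AM--GM step with this sharper $\arctan$-type bound near the origin (keeping your exponential-decay argument for the far region $s\gtrsim1$), your $\psi$-substitution route does close.
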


\begin{proof} Without loss of generality, we may assume $t\geq0$. We first prove \eqref{equ:estimg}.
From \eqref{equ:kerenelG} and Remark \ref{rem:a0}, $|G_w(t,r_1,\theta_1,r_2,\theta_2)|$ is bounded by $G_w(t,r_1,\theta_1,r_2,\theta_2)$ with $\alpha=0$, which is same as
the fundamental solution of wave equation without potential.
Thus we repeat the argument of Remark \ref{rem:a0} to show
\begin{equation}
\begin{split}
|G_w(t,r_1,\theta_1,r_2,\theta_2)|&\lesssim \frac{\sin(t\sqrt{-\Delta})}{\sqrt{-\Delta}}(x,y)\\
&=\frac1{2\pi}H(t^2-|x-y|^2)(t^2-|x-y|^2)^{-1/2},
\end{split}
\end{equation}
hence we prove \eqref{equ:estimg}.
Next we  prove \eqref{equ:estimd}. To this end, from \eqref{equ:kerenelD}, we need estimate three terms:
\begin{align}\label{kernelD1}
\sin(|\alpha|\pi) \int_0^{\beta_2}\Big(t^2-r_1^2-r_2^2-2r_1r_2\cosh \tau\Big)^{-1/2} e^{-|\alpha|\tau}d\tau,
\end{align}
\begin{align}\label{kernelD2}
\sin(\alpha\pi) \int_0^{\beta_2}\Big(t^2-r_1^2-r_2^2-2r_1r_2\cosh \tau\Big)^{-1/2} \frac{(e^{-\tau}-\cos(\theta_1-\theta_2+\pi))\sinh(\alpha\tau)}{\cosh(\tau)
-\cos(\theta_1-\theta_2+\pi)}d\tau,
\end{align}
and
\begin{align}\label{kernelD3}
\sin(\alpha\pi) \int_0^{\beta_2}\Big(t^2-r_1^2-r_2^2-2r_1r_2\cosh \tau\Big)^{-1/2} \frac{(\sin(\theta_1-\theta_2+\pi)\cosh(\alpha\tau)}{\cosh(\tau)
-\cos(\theta_1-\theta_2+\pi)}d\tau.
\end{align}
{\bf Contribution of \eqref{kernelD1}. }We first consider \eqref{kernelD1}
\begin{align}\label{kerenel}
& \int_0^{\beta_2}\Big(t^2-r_1^2-r_2^2-2r_1r_2\cosh \tau\Big)^{-1/2} e^{-|\alpha|\tau}d\tau\\\nonumber
 =&\frac1{(2r_1r_2)^{1/2}} \int_0^{\beta_2}\Big(\frac{t^2-r_1^2-r_2^2}{2r_1r_2}-\cosh \tau\Big)^{-1/2} e^{-|\alpha|\tau}d\tau\\\nonumber
 =&\frac1{(2r_1r_2)^{1/2}} \int_0^{\beta_2}\Big(\cosh(\beta_2)-\cosh \tau\Big)^{-1/2} e^{-|\alpha|\tau}d\tau\\\nonumber
  =&\frac1{(2r_1r_2)^{1/2}} \int_0^{\beta_2}\frac1{\sqrt{2\sinh\frac{\beta_2+\tau}2\sinh\frac{\beta_2-\tau}2}} e^{-|\alpha|\tau}d\tau \\\nonumber
   =&\frac1{\sinh(\frac{\beta_2}2)(2r_1r_2)^{1/2}} \int_0^{\beta_2}\frac{\sinh(\frac{\beta_2}2)}{e^{|\alpha|\tau}\sqrt{2\sinh\frac{\beta_2+\tau}2\sinh\frac{\beta_2-\tau}2}} d\tau,\\\nonumber
   =&\frac{\sqrt{2}}{\sqrt{t^2-(r_1+r_2)^2}} \int_0^{\beta_2}\frac{\sinh(\frac{\beta_2}2)}{e^{|\alpha|\tau}\sqrt{2\sinh\frac{\beta_2+\tau}2\sinh\frac{\beta_2-\tau}2}} d\tau
\end{align}
where we have used
$$\cosh\alpha-\cosh\beta=2\sinh\frac{\alpha+\beta}2\sinh\frac{\alpha-\beta}2$$
and
\begin{align}\label{defsinhas}
  \sinh\Big(\frac{\beta_2}2\Big)=&\sqrt{\cosh^2\Big(\frac{\beta_2}2\Big)-1}=\sqrt{\frac{\cosh(\beta_2)-1}2}\\\nonumber
  =&\frac1{\sqrt{2}}
\sqrt{\frac{t^2-r_1^2-r_2^2}{2r_1r_2}-1}=\frac1{\sqrt{2}}\sqrt{\frac{t^2-(r_1+r_2)^2}{2r_1r_2}}.
\end{align}

Now we claim that
\begin{equation}
\begin{split}
 \int_0^{\beta_2}\frac{\sinh(\frac{\beta_2}2)}{e^{|\alpha|\tau}\sqrt{2\sinh\frac{\beta_2+\tau}2\sinh\frac{\beta_2-\tau}2}} d\tau \lesssim 1.
\end{split}
\end{equation}

{\bf Case 1: $\beta_2\leq 1$.} In this case, we have
\begin{equation}\label{equ:case1beta}
\begin{split}
& \int_0^{\beta_2}\frac{\sinh(\frac{\beta_2}2)}{e^{|\alpha|\tau}\sqrt{2\sinh\frac{\beta_2+\tau}2\sinh\frac{\beta_2-\tau}2}} d\tau \\
 \lesssim&  \int_0^{\beta_2}\frac{\beta_2}{\sqrt{\beta_2+\tau}\sqrt{\beta_2-\tau}} d\tau \\
 \lesssim&  \int_0^{\beta_2}\frac{\sqrt{\beta_2}}{\sqrt{\beta_2-\tau}} d\tau \lesssim \int_0^{\beta_2}\frac{1}{\sqrt{\beta_2-\tau}} d\tau \lesssim 1.
\end{split}
\end{equation}

{\bf Case 2: $\beta_2\geq 1$.} We obtain
\begin{equation}
\begin{split}
& \int_0^{\beta_2}\frac{\sinh(\frac{\beta_2}2)}{e^{|\alpha|\tau}\sqrt{2\sinh\frac{\beta_2+\tau}2\sinh\frac{\beta_2-\tau}2}} d\tau \\
 \lesssim & \int_0^{\beta_2}\frac{e^{\frac{\beta_2}2}}{e^{|\alpha|\tau}\sqrt{\big(e^{\frac{\beta_2+\tau}2}-e^{-\frac{\beta_2+\tau}2}\big)
 \big(e^{\frac{\beta_2-\tau}2}-e^{-\frac{\beta_2-\tau}2}\big)}}d\tau \\ \lesssim & \int_0^{\beta_2}\frac{e^{\frac{\beta_2}2}}{e^{|\alpha|\tau}\sqrt{e^{\frac{\beta_2+\tau}2}\big(e^{\frac{\beta_2-\tau}2}
 -e^{-\frac{\beta_2-\tau}2}\big)}}d\tau
 \\=&\int_0^{\beta_2}\frac{1}{e^{|\alpha|\tau}\sqrt{1-e^{\tau-\beta_2}}}d\tau
 \\=&\int_0^{\beta_2-\frac12}\frac{1}{e^{|\alpha|\tau}\sqrt{1-e^{\tau-\beta_2}}}d\tau+\int_{\beta_2-\frac12}^{\beta_2}
 \frac{1}{e^{|\alpha|\tau}\sqrt{1-e^{\tau-\beta_2}}}d\tau
  \\\lesssim & \int_0^{\beta_2-\frac12}e^{-|\alpha|\tau}d\tau+\int_{\beta_2-\frac12}^{\beta_2}\frac{1}{e^{|\alpha|\tau}\sqrt{\beta_2-\tau}}d\tau\lesssim 1.
  \end{split}
\end{equation}
This together with \eqref{kerenel} and \eqref{equ:case1beta} yields that
\begin{equation}\label{equ:firkerenel}
\begin{split}
 \int_0^{\beta_2}\Big(t^2-r_1^2-r_2^2-2r_1r_2\cosh \tau\Big)^{-1/2} e^{-|\alpha|\tau}d\tau\lesssim \frac{1}{\sqrt{t^2-(r_1+r_2)^2}}.
\end{split}
\end{equation}

{\bf Contribution of \eqref{kernelD2}. } Note that
$$\cosh(\tau)
-\cos(\theta_1-\theta_2+\pi)=\sinh^2\big(\tfrac{\tau}2\big)+\sin^2\big(\tfrac{\theta_1-\theta_2+\pi}2\big),$$
and by \eqref{defsinhas}, we are going to estimate
\begin{align}\label{equ:cosnique}
&\int_0^{\beta_2}\Big(t^2-r_1^2-r_2^2-2r_1r_2\cosh \tau\Big)^{-1/2}  \frac{\sinh(\alpha\tau)\big(e^{-\tau}-\cos(\theta_1-\theta_2+\pi)\big)}{\sinh^2(\frac{\tau}2)+\sin^2(\frac{\theta_1-\theta_2+\pi}2)}d\tau\\\nonumber
=&\frac{\sqrt{2}}{\sqrt{t^2-(r_1+r_2)^2}} \int_0^{\beta_2}\frac{\sinh(\frac{\beta_2}2)}{\sqrt{2\sinh\frac{\beta_2+\tau}2\sinh\frac{\beta_2-\tau}2}}  \frac{\sinh(\alpha\tau)\big(e^{-\tau}-\cos(\theta_1-\theta_2+\pi)\big)}{\sinh^2(\frac{\tau}2)+\sin^2(\frac{\theta_1-\theta_2+\pi}2)}d\tau\\\nonumber
=&\frac{1}{\sqrt{t^2-(r_1+r_2)^2}} \int_0^{\beta_2}\frac{\sinh(\frac{\beta_2}2)}{\sqrt{2\sinh\frac{\beta_2+\tau}2\sinh\frac{\beta_2-\tau}2}}  \frac{\sinh(\alpha\tau)\big(e^{-\tau}-1+1-\cos(\theta_1-\theta_2+\pi)\big)}{\sinh^2(\frac{\tau}2)+\sin^2(\frac{\theta_1-\theta_2+\pi}2)}
d\tau\\\nonumber
=& \frac{1}{\sqrt{t^2-(r_1+r_2)^2}} \int_0^{\beta_2}\frac{\sinh(\frac{\beta_2}2)}{\sqrt{2\sinh\frac{\beta_2+\tau}2\sinh\frac{\beta_2-\tau}2}}  \frac{\sinh(\alpha\tau)\big(e^{-\tau}-1-2\sin^2(\frac{\theta_1-\theta_2+\pi}2)\big)}{\sinh^2(\frac{\tau}2)
+\sin^2(\frac{\theta_1-\theta_2+\pi}2)}d\tau.
\end{align}
To estimate it, we divide into two cases.

{\bf Case 1:} $\beta_2\leq 1$. On one hand, we have
\begin{align}\label{case1term21}
& \int_0^{\beta_2}\frac{\sinh(\frac{\beta_2}2)}{\sqrt{2\sinh\frac{\beta_2+\tau}2\sinh\frac{\beta_2-\tau}2}}  \frac{\sinh(\alpha\tau)\big(e^{-\tau}-1\big)}{\sinh^2(\frac{\tau}2)+\sin^2(\frac{\theta_1-\theta_2+\pi}2)}d\tau\\\nonumber
\lesssim& \int_0^{\beta_2}
\frac{\sqrt{\beta_2}}{\sqrt{\beta_2-\tau}}  \frac{(\alpha\tau)\tau}{\tau^2+(\frac{\theta_1-\theta_2+\pi}2)^2}d\tau\\\nonumber
\lesssim&  \int_0^{\beta_2}
\frac{\sqrt{\beta_2}}{\sqrt{\beta_2-\tau}} d\tau\lesssim 1
\end{align}
Similarly, we get
\begin{align}\label{case1term22}
& \int_0^{\beta_2}\frac{\sinh(\frac{\beta_2}2)}{\sqrt{2\sinh\frac{\beta_2+\tau}2\sinh\frac{\beta_2-\tau}2}}  \frac{\sinh(\alpha\tau)\sin^2(\frac{\theta_1-\theta_2+\pi}2)}{\sinh^2(\frac{\tau}2)+\sin^2(\frac{\theta_1-\theta_2+\pi}2)}d\tau\\\nonumber
\lesssim&  \int_0^{\beta_2}
\frac{\sqrt{\beta_2}}{\sqrt{\beta_2-\tau}}  \frac{(\frac{\theta_1-\theta_2+\pi}2)^2}{\tau^2+(\frac{\theta_1-\theta_2+\pi}2)^2}d\tau\\\nonumber
\lesssim&  \int_0^{\beta_2}
\frac{\sqrt{\beta_2}}{\sqrt{\beta_2-\tau}} d\tau\lesssim 1
\end{align}
This together with \eqref{equ:cosnique}--\eqref{case1term22} implies
$$\eqref{kernelD2}\lesssim \frac{1}{\sqrt{t^2-(r_1+r_2)^2}} $$
 in the case that $\beta_2\leq1.$

{\bf Case 2:} $\beta_2\geq 1$. By the above argument, we have the estimate for  $\int_0^1$. So we only need to consider $\int_1^{\beta_2}$.
We have
\begin{align}\label{equ:case2term31}
& \int_1^{\beta_2}\frac{\sinh(\frac{\beta_2}2)}{\sqrt{2\sinh\frac{\beta_2+\tau}2\sinh\frac{\beta_2-\tau}2}}  \frac{\sinh(\alpha\tau)|(e^{-\tau}-1-2\sin^2(\frac{\theta_1-\theta_2+\pi}2))|}
{\sinh^2(\frac{\tau}2)+\sin^2(\frac{\theta_1-\theta_2+\pi}2)}d\tau\\\nonumber
\lesssim&  \int_1^{\beta_2}
\frac{e^{\frac{\beta_2}2}}{\sqrt{e^{\frac{\beta_2+\tau}2}(e^{\frac{\beta_2-\tau}2}-e^{\frac{\tau-\beta_2}2})}}   \frac{\sinh(\alpha\tau)}{\sinh^2(\frac{\tau}2)+\sin^2(\frac{\theta_1-\theta_2+\pi}2)}d\tau\\\nonumber
\lesssim&  \int_1^{\beta_2}
\frac{1}{\sqrt{1-e^{\tau-\beta_2}}}   \frac{\sinh(\alpha\tau)}{\sinh^2(\frac{\tau}2)+\sin^2(\frac{\theta_1-\theta_2+\pi}2)}d\tau\\\nonumber
\lesssim&  \Big( \int_1^{\beta_2-1/2}
  \frac{\sinh(\alpha\tau)}{\sinh^2(\frac{\tau}2)+\sin^2(\frac{\theta_1-\theta_2+\pi}2)}d\tau
+\int_{\beta_2-1/2} ^{\beta_2}
\frac{1}{\sqrt{\beta_2-\tau}}   \frac{\sinh(\alpha\tau)}{\sinh^2(\frac{\tau}2)+\sin^2(\frac{\theta_1-\theta_2+\pi}2)}d\tau\Big)\\\nonumber
\lesssim& \Big( \int_1^{\beta_2-1/2}  e^{-(1-\alpha)\tau}d\tau+\int_{\beta_2-1/2} ^{\beta_2}
\frac{1}{\sqrt{\beta_2-\tau}}   e^{-(1-\alpha)\tau}d\tau\Big)\\\nonumber
\lesssim& 1,
\end{align}
and
\begin{align}\label{equ:case2term323}
&\Big| \int_1^{\beta_2}\frac{\sinh(\frac{\beta_2}2)}{\sqrt{2\sinh\frac{\beta_2+\tau}2\sinh\frac{\beta_2-\tau}2}}\frac{\cosh(\alpha\tau)
\sin(\theta_1-\theta_2+\pi)}{\sinh^2(\frac{\tau}2)+\sin^2(\frac{\theta_1-\theta_2+\pi}2)}d\tau\Big|\\\nonumber
&\lesssim \int_1^{\beta_2}
\frac{e^{\frac{\beta_2}2}}{\sqrt{e^{\frac{\beta_2+\tau}2}(e^{\frac{\beta_2-\tau}2}-e^{\frac{\tau-\beta_2}2})}}   \frac{\cosh(\alpha\tau)}{\sinh^2(\frac{\tau}2)}d\tau\\\nonumber
&\lesssim   \int_1^{\beta_2}
\frac{1}{\sqrt{1-e^{\tau-\beta_2}}}   e^{-(1-\alpha)\tau}d\tau\\\nonumber
&\lesssim  \Big( \int_1^{\beta_2-1/2} e^{-(1-\alpha)\tau}d\tau+ \int_{\beta_2-1/2}^{\beta_2} \frac{1}{\sqrt{\beta_2-\tau}}  e^{-(1-\alpha)\tau}d\tau\Big)\\\nonumber
&\lesssim 1.
\end{align}
These together with \eqref{equ:cosnique} yield
$$\eqref{kernelD2}\lesssim \frac{1}{\sqrt{t^2-(r_1+r_2)^2}} $$
 in the case that $\beta_2\geq1.$

{\bf Contribution of \eqref{kernelD3}. }
Similarly as \eqref{kernelD2}, we write
\begin{align}\label{equ:term2sesca}
   &\int_0^{\beta_2}\Big(t^2-r_1^2-r_2^2-2r_1r_2\cosh \tau\Big)^{-1/2} \frac{\cosh(\alpha\tau)\sin(\theta_1-\theta_2+\pi)}{\sinh^2(\frac{\tau}2)
   +\sin^2(\frac{\theta_1-\theta_2+\pi}2)}d\tau\\\nonumber
   =&\frac{1}{\sqrt{t^2-(r_1+r_2)^2}} \int_0^{\beta_2}\frac{\sinh(\frac{\beta_2}2)}{\sqrt{2\sinh\frac{\beta_2+\tau}2\sinh\frac{\beta_2-\tau}2}}
   \frac{\cosh(\alpha\tau)\sin(\theta_1-\theta_2+\pi)}{\sinh^2(\frac{\tau}2)
   +\sin^2(\frac{\theta_1-\theta_2+\pi}2)}d\tau.
\end{align}
We divide into two cases.

{\bf Case 1:} $\beta_2\leq 1$. Let $b:=\big|\sin\big(\tfrac{\theta_1-\theta_2+\pi}2\big)\big|,$ we obtain
\begin{align}\label{equ:cosonlyaf}
& \Big|\int_0^{\beta_2}\frac{\sinh(\frac{\beta_2}2)}{\sqrt{2\sinh\frac{\beta_2+\tau}2\sinh\frac{\beta_2-\tau}2}}   \frac{\cosh(\alpha\tau)\sin(\theta_1-\theta_2+\pi)}{\sinh^2(\frac{\tau}2)+\sin^2(\frac{\theta_1-\theta_2+\pi}2)}d\tau\Big|\\\nonumber
\lesssim&  \int_0^{\beta_2}
\frac{\sqrt{\beta_2}}{\sqrt{\beta_2-\tau}}  \frac{b}{\tau^2+b^2}d\tau\\\nonumber
\lesssim&  \Big( \int_0^{\beta_2/2}  \frac{b}{\tau^2+b^2}d\tau
+\int_{\beta_2/2}^{\beta_2}\frac{\sqrt{\beta_2}}{\sqrt{\beta_2-\tau}}  \frac{b}{\tau^2+b^2}d\tau
\Big)\\\nonumber
\lesssim& \Big( \int_0^{+\infty}  \frac{1}{\tau^2+1}d\tau
+\frac1{\beta_2}\int_{\beta_2/2}^{\beta_2}\frac{\sqrt{\beta_2}}{\sqrt{\beta_2-\tau}}  d\tau
\Big)\\\nonumber
\lesssim& 1.
\end{align}
This together with \eqref{equ:cosnique}--\eqref{case1term22} implies
$$\eqref{kernelD3}\lesssim \frac{1}{\sqrt{t^2-(r_1+r_2)^2}} $$
 in the case that $\beta_2\leq1.$

{\bf Case 2:} $\beta_2\geq 1$. By the above argument, we have the estimate for  $\int_0^1$. So we only need to consider $\int_1^{\beta_2}$.
We have
\begin{align}\label{equ:case2term32}
&\Big| \int_1^{\beta_2}\frac{\sinh(\frac{\beta_2}2)}{\sqrt{2\sinh\frac{\beta_2+\tau}2\sinh\frac{\beta_2-\tau}2}}\frac{\cosh(\alpha\tau)
\sin(\theta_1-\theta_2+\pi)}{\sinh^2(\frac{\tau}2)+\sin^2(\frac{\theta_1-\theta_2+\pi}2)}d\tau\Big|\\\nonumber
&\lesssim \int_1^{\beta_2}
\frac{e^{\frac{\beta_2}2}}{\sqrt{e^{\frac{\beta_2+\tau}2}(e^{\frac{\beta_2-\tau}2}-e^{\frac{\tau-\beta_2}2})}}   \frac{\cosh(\alpha\tau)}{\sinh^2(\frac{\tau}2)}d\tau\\\nonumber
&\lesssim   \int_1^{\beta_2}
\frac{1}{\sqrt{1-e^{\tau-\beta_2}}}   e^{-(1-\alpha)\tau}d\tau\\\nonumber
&\lesssim  \Big( \int_1^{\beta_2-1/2} e^{-(1-\alpha)\tau}d\tau+ \int_{\beta_2-1/2}^{\beta_2} \frac{1}{\sqrt{\beta_2-\tau}}  e^{-(1-\alpha)\tau}d\tau\Big)\\\nonumber
&\lesssim 1.
\end{align}
These together with \eqref{equ:cosnique} and \eqref{equ:term2sesca} yield $$\eqref{kernelD3}\lesssim \frac{1}{\sqrt{t^2-(r_1+r_2)^2}} $$ in the case that $\beta_2\geq1.$

Therefore, we conclude the proof of Proposition \ref{prop:estimadterm}.

\end{proof}

\section{Proof of Theorem \ref{thm:dispersive}}
The main ingredient is the following localized-frequency decay result.
If done, by the definition of Besov space, we sum in $j$ to complete the proof of Theorem \ref{thm:dispersive}.
\begin{proposition}\label{prop:mdispersive}
Let $\varphi\in C_c^\infty(\mathbb{R}\setminus\{0\})$, with $0\leq\varphi\leq1$, and $\text{supp}\,\varphi\subset[1/2,2]$, as in \eqref{dp}. Then for all $j\in \Z$, there exists a constant $C$ independent of $x, y$ and $t$ such that
\begin{equation}\label{mic-dispersive}
\begin{split}
\Big\|\frac{\sin(t\sqrt{\mathcal L_{{\A},0}})}{\sqrt{\mathcal L_{{\A},0}}}f\Big\|_{L^\infty(\R^2)}\leq C 2^{\frac{j}2}(2^{-j}+|t|)^{-\frac12}\|f\|_{L^1(\R^2)},
\end{split}
\end{equation}
where $f=\varphi(2^{-j}{\sqrt{\mathcal L_{{\A},0}}})f$.
\end{proposition}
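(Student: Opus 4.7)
The plan is to exploit the scaling invariance of $\mathcal L_{{\A},0}$ to reduce \eqref{mic-dispersive} to the unit-frequency inequality, and then to extract the sharp $|t|^{-1/2}$ decay from the explicit fundamental-solution representation of Proposition \ref{prop:kernel}, averaged against the Schwartz-type kernel of the spectral cut-off $P_0=\varphi(\sqrt{\mathcal L_{{\A},0}})$.

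Inserting the Hankel expansion \eqref{funct1} and performing the change of variable $\rho\mapsto 2^j\rho$, together with $J_\nu(2^jr\sigma)=J_\nu((2^jr)\sigma)$, one sees that the Schwartz kernel of the localised propagator satisfies
\[
K_j(t,x,y)=2^jK_0(2^jt,2^jx,2^jy),
\]
and since $2^j(1+2^j|t|)^{-1/2}=2^{j/2}(2^{-j}+|t|)^{-1/2}$, estimate \eqref{mic-dispersive} is equivalent to the uniform bound $|K_0(s,x,y)|\lesssim(1+|s|)^{-1/2}$. For $|s|\leq 1$, the multiplier $m_s(\rho)=\sin(s\rho)\rho^{-1}\varphi(\rho)$ is a smooth bump in $\rho\in[\tfrac12,2]$ with Mikhlin semi-norms of order $1$ uniformly in $s$, so the Gaussian bound \eqref{equ:heat} and heat subordination give $|K_0(s,x,y)|\lesssim\langle x-y\rangle^{-N}\lesssim 1$ for any $N$.

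For $|s|\geq 1$, I factor $U(s)P_0$ so that
\[
K_0(s,x,y)=\int_{\R^2}\bigl(G_w+D_w\bigr)(s,x,z)\,K_{P_0}(z,y)\,dz,
\]
with the Schwartz-type decay $|K_{P_0}(z,y)|\lesssim\langle z-y\rangle^{-N}$ (same subordination argument), and with Proposition \ref{prop:estimadterm} providing $|G_w|\lesssim(s^2-|x-z|^2)^{-1/2}\mathbbm{1}_{|x-z|<s}$ and $|D_w|\lesssim(s^2-(|x|+|z|)^2)^{-1/2}\mathbbm{1}_{|x|+|z|<s}$. Passing to polar coordinates centred at $x$ (resp.\ at the origin for the $D_w$ piece), the geometric contribution reduces to estimating
\[
\int_0^s\frac{r\,dr}{\sqrt{s^2-r^2}}\int_0^{2\pi}\bigl\langle x-y-r(\cos\theta,\sin\theta)\bigr\rangle^{-N}d\theta,
\]
and splitting the radial range into the boundary layer $r\in[s-1,s]$ (of measure $1$, contributing the square-root singularity $\int_0^1\tau^{-1/2}d\tau\cdot s^{-1/2}$) and its complement (where the Schwartz tail $\langle\cdot\rangle^{-N}$ dominates the factor $(s^2-r^2)^{-1/2}$) yields the desired $s^{-1/2}$ bound uniformly in $x,y$. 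The diffractive kernel $D_w$ is treated identically, using its support in $\{|x|+|z|<s\}$.

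The main obstacle is upgrading the Gaussian heat-kernel estimate \eqref{equ:heat} to genuine pointwise Schwartz-type decay of $K_{P_0}$: without such control, the convolution against the non-integrable singularity $(s^2-|x-z|^2)^{-1/2}$ cannot be localised to an $O(1)$-neighbourhood of the light cone, and the sharp $s^{-1/2}$ would be lost. This step is not a direct consequence of the Mikhlin multiplier bound \eqref{equ:Mihmult}, and is the only ingredient beyond the fundamental-solution formula that is truly sensitive to the singular nature of the Aharonov-Bohm potential.
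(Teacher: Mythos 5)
Your route is genuinely different from the paper's. The paper applies the pointwise bounds of Proposition \ref{prop:estimadterm} directly to the frequency-localized datum $f$ and removes the inverse-square-root singularity at the light cone by a Shatah--Struwe integration by parts: writing $(t-|y|)^{-1/2}$ as a gradient, integrating by parts, and controlling the resulting $\|\nabla|f|\|_{L^1}$ via the diamagnetic inequality $|\nabla|f||\le|\nabla_{\A}f|$ combined with the magnetic Bernstein inequality \eqref{bern2}. You instead move the frequency cut-off onto the kernel, factoring $\tfrac{\sin(s\sqrt{\LL_{{\A},0}})}{\sqrt{\LL_{{\A},0}}}\varphi(\sqrt{\LL_{{\A},0}})$ as a composition and then splitting the resulting radial integral into a unit-thick boundary layer near the light cone, which furnishes the $\int_0^1\tau^{-1/2}\,d\tau\cdot s^{-1/2}$, and its complement, where $(s^2-r^2)^{-1/2}\le s^{-1/2}$ already holds and the angular integral is absolutely convergent. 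This is clean and avoids integrating by parts against $f$, but it trades the diamagnetic/Bernstein input for a new ingredient: rapid off-diagonal decay of the cut-off kernel $\varphi(\sqrt{\LL_{{\A},0}})(z,y)$.

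You rightly flag that last point as the one step not already in the paper, but the worry is somewhat over-stated, since it does follow from the Gaussian bound \eqref{equ:heat} once one passes to complex time. Because $\LL_{{\A},0}$ is nonnegative self-adjoint, Davies' extension of Gaussian estimates to the analytic semigroup gives $|e^{-(1-i\xi)\LL_{{\A},0}}(z,y)|\lesssim(1+\xi^2)\exp\bigl(-c|z-y|^2/(1+\xi^2)\bigr)$; writing $\varphi(\sqrt\lambda)=\psi_e(\lambda)e^{-\lambda}$ with $\hat\psi_e$ Schwartz, integrating $\hat\psi_e(\xi)$ against this bound and splitting the $\xi$-integral at $|\xi|=|z-y|^{1/2}$ yields polynomial decay of any order in $|z-y|$. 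This is exactly the mechanism the paper already exploits (at the level of $L^1\to L^1$ operator norms rather than kernels) in the proof of \eqref{bern2}; re-running it at the kernel level closes your argument and makes it a correct alternative to the paper's proof.
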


\begin{proof} By the scaling, it suffices to establish \eqref{mic-dispersive} when $j=0$,  that is,
\begin{equation}\label{mic-dispersive'}
\begin{split}
\Big\|\frac{\sin(t\sqrt{\mathcal L_{{\A},0}})}{\sqrt{\mathcal L_{{\A},0}}}f\Big\|_{L^\infty(\R^2)}\leq C (1+|t|)^{-\frac12}\|f\|_{L^1},\quad f=\varphi({\sqrt{\mathcal L_{{\A},0}}})f.
\end{split}
\end{equation}
By using Proposition \ref{prop:kernel} and Proposition \ref{prop:estimadterm},
we need to establish
\begin{equation}\label{mic-dispersive1}
\begin{split}
\Big\|\int_{|x-y|^2\leq t^2} \frac{|f(y)|}{\sqrt{t^2-|x-y|^2}} dy \Big\|_{L^\infty(\R^2)}\leq C (1+|t|)^{-\frac12}\|f\|_{L^1},
\end{split}
\end{equation}
and
\begin{equation}\label{mic-dispersive2}
\begin{split}
\Big\|\int_{(r_1+r_2)^2\leq t^2}\int_{0}^{2\pi} \frac{|f(r_2,\theta_2)|}{\sqrt{t^2-(r_1+r_2)^2}} r_2dr_2 d\theta_2 \Big\|_{L^\infty(\R^2)}\leq C (1+|t|)^{-\frac12}\|f\|_{L^1}.
\end{split}
\end{equation}
We use the method in Shatah-Struwe \cite[Page 47]{SS} to prove both of them. By the symmetry of $t$, we only consider the case $t\geq0$.

 Now we first prove \eqref{mic-dispersive1}. We write
 \begin{equation}
\begin{split}
&\int_{|x-y|^2\leq t^2} \frac{|f(y)|}{\sqrt{t^2-|x-y|^2}} dy=\int_{|y|\leq t} \frac{|f(x-y)|}{\sqrt{t^2-|y|^2}} dy.
\end{split}
\end{equation}
When $0\leq t\leq 1$, from the Bernstein inequality \eqref{bern1}, it is easy to see
\begin{equation}\label{t<1}
\begin{split}
\int_{|y|\leq t} \frac{|f(x-y)|}{\sqrt{t^2-|y|^2}} dy\leq Ct \|f\|_{L^\infty}\leq C\|f\|_{L^1}.
\end{split}
\end{equation}
Thus it suffices to  prove, for $t\geq1$
 \begin{equation}\label{t>1}
\begin{split}
\Big\|\int_{|y|\leq t} \frac{|f(x-y)|}{\sqrt{t^2-|y|^2}} dy\Big\|_{L^\infty_x}\leq C |t|^{-1/2}\|f\|_{L^1}.
\end{split}
\end{equation}
To this end, we split into two terms
 \begin{equation}
\begin{split}
\int_{|y|\leq t} \frac{|f(x-y)|}{\sqrt{t^2-|y|^2}} dy =\int_{ |y|\leq t-\frac12} \frac{|f(x-y)|}{\sqrt{t^2-|y|^2}} dy+\int_{t-\frac12\leq |y|\leq t} \frac{|f(x-y)|}{\sqrt{t^2-|y|^2}} dy.
\end{split}
\end{equation}
For the first term, we can estimate
 \begin{equation}
\begin{split}
\int_{ |y|\leq \frac12} \frac{|f(x-y)|}{\sqrt{t^2-|y|^2}} dy\leq Ct^{-1/2}\|f\|_{L^\infty}\leq Ct^{-1/2}\|f\|_{L^1}.
\end{split}
\end{equation}
For the second term, we have
 \begin{equation}\label{t>1'}
\begin{split}
&\int_{t-\frac{1}2\leq |y|\leq t} \frac{|f(x-y)|}{\sqrt{t^2-|y|^2}} dy
\leq C t^{-1/2} \|f\|_{L^\infty} \int_{t-\frac{1}2\leq |y|\leq t} \frac{1}{\sqrt{t-|y|}}dy,\\
&= C t^{-1/2} \int_{t-\frac{1}2\leq |y|\leq t} |f(x-y)|\varphi(|y|) \frac{-y}{|y|}\cdot\nabla(\sqrt{t-|y|})dy\\
&\leq C t^{-1/2} \int_{0\leq |y|\leq t} |f(x-y)|\varphi(|y|) \frac{-y}{|y|}\cdot\nabla(\sqrt{t-|y|})dy\\
\end{split}
\end{equation}
where $0\leq \varphi(r)\in C^{\infty}([0,\infty))$ takes value $1$ when $r\in [t-1/2,t]$ and vanishes if $r\in[0,t-2/3]$.
By using integration by parts, we obtain
 \begin{equation}\label{t>1''}
\begin{split}
&\int_{t-\frac{1}2\leq |y|\leq t} \frac{|f(x-y)|}{\sqrt{t^2-|y|^2}} dy\\
&\leq C t^{-1/2}\Big(\int_{0\leq |y|\leq t} \big(|\nabla |f(x-y)||+\frac{|f(x-y)|}{|y|}\big)\varphi(|y|)\sqrt{t-|y|}dy\\
&\qquad+\int_{0\leq |y|\leq t}|f(x-y)|\frac{\varphi'(|y|)}{|y|}\big)\sqrt{t-|y|}dy\Big), \\
&\leq C t^{-1/2}\Big(\int_{\R^2} |\nabla |f(x-y)||dy+\int_{\R^2}|f(x-y)|dy\Big).\\
\end{split}
\end{equation}
By using the diamagnetic inequality (see \cite[Lemma A.1]{FFT})
 $$
|\nabla|f|(x)| \leq |\nabla_{\A} f(x)|,
 $$
 we therefore show
 \begin{equation}
\begin{split}
&\int_{\frac{1}2\leq |y|\leq t} \frac{|f(x-y)|}{\sqrt{t^2-|y|^2}} dy
\leq C t^{-1/2}\big(\|\nabla_{\A}f\|_{L^1}+\|f\|_{L^1}\big),
\end{split}
\end{equation}
which implies \eqref{t>1} due to $f=\varphi({\sqrt{\mathcal L_{{\A},0}}})f$ localized in frequency and \eqref{bern2}. Hence we prove \eqref{mic-dispersive1}.\vspace{0.2cm}

We next prove \eqref{mic-dispersive2}. The argument is similar but we work in polar coordinates. Since we assume $t\geq0$, we consider
\begin{equation}
\begin{split}
\Big|\int_{r_1+r_2\leq t}\int_{0}^{2\pi} \frac{|f(r_2,\theta_2)|}{\sqrt{t^2-(r_1+r_2)^2}} r_2dr_2 d\theta_2 \Big|.
\end{split}
\end{equation}
For $t\leq 1$, similar as \eqref{t<1}, it follows
\begin{equation}
\begin{split}
\Big|\int_{r_1+r_2\leq t}\int_{0}^{2\pi} \frac{|f(r_2,\theta_2)|}{\sqrt{t^2-(r_1+r_2)^2}} r_2dr_2 d\theta_2 \Big|\leq C\|f\|_{L^1}.
\end{split}
\end{equation}
When $t\geq 1$, we have
 \begin{align*}
    &\Big|\int_{r_1+r_2<t}\int_0^{2\pi}\frac{|f(r_2,\theta_2)|}{\sqrt{t^2-(r_1+r_2)^2}}\;r_2\;dr_2\;d\theta\Big|\\
     \lesssim&t^{-\frac12}\int_{r_2<t-r_1}\int_0^{2\pi}\frac{|f(r_2,\theta_2)|}{\sqrt{(t-r_1)-r_2}}\;r_2\;dr_2\;d\theta\Big|\\
     \lesssim&t^{-\frac12}\int_{|y|<t-r_1}\frac{|f(y)|}{\sqrt{(t-r_1)-|y|}}\;dy.
  \end{align*}
Then we repeat the argument of \eqref{t>1'} and \eqref{t>1''} to show \eqref{mic-dispersive2}.

\end{proof}

\section{Proof of Theorem \ref{thm:Stri}}

We devote this section to the proof of the Strichartz estimates, Theorem \ref{thm:Stri}.
First notice that, by the representation formula \eqref{eq:solution} and the equivalence in Lemma \ref{Lem:sobequil2}, it is sufficient to prove the following estimate
\begin{equation}\label{equ:claim}
\|e^{it\sqrt{\LL_{{\A},a}}}f\|_{L^q_tL^{r}_x(\mathbb{R}\times \R^2)}\lesssim
\|f\|_{\dot{H}^s_{{\A},0}(\R^2)}.
\end{equation}

We will first prove \eqref{equ:claim} in the purely magnetic case $a\equiv0$, and then in the general case, as a consequence of a local smoothing estimate.

\subsection{Strichartz estimates for purely magnetic waves}

Let us start with the purely magnetic case $a\equiv0$. Our first step is to prove the following claim
\begin{equation}\label{claim123}
\|e^{it\sqrt{\LL_ {{\A},0}}}f\|_{L^q_tL^{r}_x(\mathbb{R}\times \R^2)}\lesssim
\|f\|_{\dot{H}^s_{{\A},0}(\R^2)},
\end{equation}
for $s\in\R$, any wave-admissible pair $(q,r)\in\Lambda_s^W$ as in \eqref{adm-p-w}, $f\in \dot{H}^s_{{\A},0}(\R^2)$, and for some $C>0$ independent on $f$.

\begin{proposition}\label{lStrichartz} Let $U(t)=e^{it\sqrt{\LL_{{\A},0}}}$ and
$f=\varphi_j(\sqrt{\LL_{{\A},0}})f$ as in \eqref{equ:lidef} for $j\in\Z$,  then
\begin{equation}\label{lstri}
\|U(t)f\|_{L^q_tL^{r}_x(\mathbb{R}\times \R^2)}\lesssim
2^{js}\|f\|_{L^2(\R^2)},
\end{equation}
where $s\in\R$ and $(q,r)\in\Lambda_s^W$ defined in \eqref{adm-p-w}.
\end{proposition}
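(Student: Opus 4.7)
The strategy is the standard Keel--Tao reduction \cite{KT}: since $\LL_{\A,0}$ is self-adjoint, $U(t)$ is unitary on $L^2(\R^2)$ and the energy bound $\|U(t)f\|_{L^2}=\|f\|_{L^2}$ is automatic, so the only ingredient missing is a frequency-localized $L^1\to L^\infty$ dispersive estimate. Concretely, the target is
\begin{equation*}
\|U(t)f\|_{L^\infty(\R^2)}\lesssim 2^{\frac{3j}{2}}\bigl(2^{-j}+|t|\bigr)^{-\frac12}\|f\|_{L^1(\R^2)},\qquad f=\varphi_j(\sqrt{\LL_{\A,0}})f,
\end{equation*}
which is the natural $L^1\to L^\infty$ analog of Theorem~\ref{thm:dispersive}, shifted by one derivative to account for the extra factor of $\sqrt{\LL_{\A,0}}$ when passing from $\sin(t\sqrt{\LL_{\A,0}})/\sqrt{\LL_{\A,0}}$ to $e^{it\sqrt{\LL_{\A,0}}}$. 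Coupled with the $L^2$-conservation, the $TT^*$ interpolation of \cite{KT} then yields, for every wave-admissible pair $(q,r)\in\Lambda_s^W$, a bound of the form $\|U(t)f\|_{L^q_tL^r_x}\lesssim 2^{j\sigma}\|f\|_{L^2}$ with $\sigma$ forced by scaling to equal $s=2(\tfrac12-\tfrac1r)-\tfrac1q$, which is exactly Proposition~\ref{lStrichartz}.

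To derive the target estimate I would split $U(t)=\cos(t\sqrt{\LL_{\A,0}})+i\sin(t\sqrt{\LL_{\A,0}})$ and treat the two halves separately. For the sine half, the spectral-calculus identity
\begin{equation*}
\sin(t\sqrt{\LL_{\A,0}})\,\varphi_j(\sqrt{\LL_{\A,0}})=2^j\cdot\frac{\sin(t\sqrt{\LL_{\A,0}})}{\sqrt{\LL_{\A,0}}}\cdot m_j(\sqrt{\LL_{\A,0}}),\qquad m_j(\lambda):=2^{-j}\lambda\,\varphi(2^{-j}\lambda),
\end{equation*}
reduces the matter to Proposition~\ref{prop:mdispersive} as soon as $m_j(\sqrt{\LL_{\A,0}})$ is bounded on $L^1(\R^2)$ uniformly in $j$. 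This $L^1$-boundedness follows from the Gaussian heat kernel bound \eqref{equ:heat} of Proposition~\ref{prop:pointest} together with the spectral multiplier theory on spaces with Gaussian heat kernel estimates, following Alexopoulos \cite{Alex} and \cite{KMVZZ}; the point is that, after rescaling, $m_j$ is a fixed $C^\infty_c$ function supported in a dyadic annulus bounded away from $0$ and $\infty$, so the relevant bounds are uniform in $j$. For the cosine half I would replay the Cheeger--Taylor kernel construction of Propositions~\ref{prop:kernel}--\ref{prop:estimadterm} with $\cos(t\rho)$ in place of $\sin(t\rho)/\rho$ in \eqref{qu:kerenelknu}: the very same geometric plus diffractive decomposition appears, with the singularity $[t^2-|x-y|^2]^{-1/2}$ now carrying one extra power of $\rho\sim 2^j$ on $\varphi_j$-localized data, and the pointwise estimates plus the integration-by-parts trick of Proposition~\ref{prop:mdispersive} produce the corresponding $L^1\to L^\infty$ bound with the additional $2^j$ factor.

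The principal technical obstacle is the cosine-propagator dispersive estimate: Proposition~\ref{prop:mdispersive} is written only for $\sin(t\sqrt{\LL_{\A,0}})/\sqrt{\LL_{\A,0}}$, so extending it to $\cos(t\sqrt{\LL_{\A,0}})$ at the $L^1\to L^\infty$ level requires either recycling the full Cheeger--Taylor kernel computation (the analog of Propositions~\ref{prop:kernel}--\ref{prop:estimadterm}, where the derivative in $t$ introduces a more singular kernel that must be tamed by the LP-localization factor $2^j$) or, alternatively, invoking a spectral multiplier trick to relate $\cos(t\sqrt{\LL_{\A,0}})\,\varphi_j(\sqrt{\LL_{\A,0}})$ to $\sin(t\sqrt{\LL_{\A,0}})\,\varphi_j(\sqrt{\LL_{\A,0}})$ via multiplication by an $L^1$-bounded symbol adapted to the dyadic scale $2^j$. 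Once this cosine estimate is verified, the rest of the proof is a routine adaptation of the standard Strichartz machinery and a dimensional-analysis identification of the exponent $\sigma$ with $s$ on $\Lambda_s^W$.
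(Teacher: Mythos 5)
Your overall strategy---a frequency-localized $L^1\to L^\infty$ dispersive bound for $U(t)$ plus the Keel--Tao machinery---is the natural one, and the multiplier trick that reduces the $\sin(t\sqrt{\LL_{\A,0}})\varphi_j$ piece to Proposition~\ref{prop:mdispersive} via $m_j(\lambda)=2^{-j}\lambda\,\varphi(2^{-j}\lambda)$ is sound, with a caveat: the $L^1\to L^1$ uniform boundedness of $m_j(\sqrt{\LL_{\A,0}})$ does not follow from the Mikhlin multiplier bound \eqref{equ:Mihmult}, which is stated only for $1<p<\infty$. It is true, but it needs the heat-kernel route used to prove the Bernstein estimate \eqref{bern2} at $p=1$ (i.e.\ the \cite{IO}-type argument, not \eqref{equ:Mihmult}).

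The paper, however, takes a structurally different route precisely to sidestep the piece you flag as the ``principal technical obstacle.'' Rather than proving an $L^1\to L^\infty$ bound for $U(t)$ (hence for $\cos(t\sqrt{\LL_{\A,0}})$) and then applying Keel--Tao, the proof packages $\dot K(t)=\cos(t\sqrt{\LL_{\A,0}})$ and $K(t)=\sin(t\sqrt{\LL_{\A,0}})/\sqrt{\LL_{\A,0}}$ into the $2\times2$ wave group $\mathbf{K}(t)$, uses the group law of $\mathbf{K}$, and runs a $TT^*$ argument on the matrix operator; $U(t)$ is then recovered by applying $\mathbf{K}$ to $(f, i\sqrt{\LL_{\A,0}}f)$. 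The advantage is that the inhomogeneous estimate \eqref{d-claim1} for the $K$-block follows from Proposition~\ref{prop:mdispersive} combined with $L^2$-interpolation and Hardy--Littlewood--Sobolev, while the $\dot K$-block \eqref{d-claim2} is handled by the argument of \cite[pp.~14--15]{BFM} \emph{without} a stand-alone $L^1\to L^\infty$ estimate for $\cos(t\sqrt{\LL_{\A,0}})$. In short, the paper buys exactly what you are missing: a route around the cosine dispersive estimate.

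On your proposed workarounds for the cosine: the ``spectral multiplier trick to relate $\cos(t\sqrt{\LL_{\A,0}})\varphi_j$ to $\sin(t\sqrt{\LL_{\A,0}})\varphi_j$'' does not work, since the ratio $\cot(t\lambda)$ is unbounded on the $\varphi_j$-support as $t$ varies, so there is no $L^1$-bounded symbol (uniform in $t$) implementing it. Replaying the Cheeger--Taylor construction for the kernel of $\cos(t\sqrt{\LL_{\A,0}})$ in place of $\sin(t\sqrt{\LL_{\A,0}})/\sqrt{\LL_{\A,0}}$ could in principle be carried out, but the kernel is genuinely more singular (a time derivative of a boundary-type singularity $(t^2-|x-y|^2)^{-1/2}$) and the frequency localization would have to absorb that extra singularity; this is nontrivial and the paper deliberately avoids it. As written, your proposal therefore leaves a real gap at the cosine propagator, which the paper fills by the matrix-$TT^*$ device rather than by a frequency-localized dispersive estimate for $\cos(t\sqrt{\LL_{\A,0}})$.
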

\begin{proof}
By the scaling, it suffices to prove \eqref{lstri} when $j=0$. Since the dispersive estimate \eqref{mic-dispersive} only works for $\sin(t\sqrt{\LL_{{\A},0}})$ while not $U(t)$,
we define the wave group

\begin{equation}       
{\bf K}(t)=\left(                 
  \begin{array}{cc}   
    \dot K(t) & K(t) \\  
    \ddot K(t) & \dot K(t) \\  
  \end{array}
\right)                 
\end{equation}
where $K(t)=\sin(t\sqrt{\LL_{{\A},0}})/\sqrt{\LL_{{\A},0}}$ and $\dot K(t)=\cos(t\sqrt{\LL_{{\A},0}})$.
Then the wave group ${\bf K}(t)$ acts on pairs $(h,g)$
\begin{equation*}
   {\bf K}(t)  \left(\begin{array}{c}
    h \\
    g \\
  \end{array}
\right)  =\left(
  \begin{array}{cc}
    \dot K(t) & K(t) \\
     \ddot K(t) & \dot K(t) \\
  \end{array}
\right)     \left(\begin{array}{c}
    h \\
    g \\
  \end{array}
\right)    .
\end{equation*}
We claim that
\begin{equation}  \label{claim}
\left\|{\bf K}(t)  \left(\begin{array}{c}
    h \\
    g \\
  \end{array}
\right) \right\|_{L^q_tL^{r}_x(\mathbb{R}\times \R^2)}\\
\lesssim \left\| \left(\begin{array}{c}
    h \\
    g \\
  \end{array}
\right) \right\|_{L^2_x(\R^2)}
\end{equation}
where $h=\varphi_0(\sqrt{\LL_{{\A},0}})h$ and $g=\varphi_0(\sqrt{\LL_{{\A},0}})g$. If we could prove \eqref{claim},
by taking $h=f$ and $g=i\sqrt{\LL_{{\A},0}}f$, then we obtain \eqref{lstri} with $j=0$.\vspace{0.2cm}

We now prove \eqref{claim}. By using the $TT^*$-argument, it suffices to show
\begin{equation}  \label{d-claim}
\left\|\int_{\R}{\bf K}(t-s)  \left(\begin{array}{c}
    H(s) \\
    G(s)\\
  \end{array}
\right)d s \right\|_{L^q_tL^{r}_x(\mathbb{R}\times \R^2)}\\
\lesssim \left\| \left(\begin{array}{c}
    H \\
    G \\
  \end{array}
\right) \right\|_{L^{q'}_tL^{r'}_x(\R\times\R^2)}
\end{equation}
where $H(s)$ and $G(s)$ are functions in $L^{q'}(\R^2)$, for each $s$, $H(s)$ and $G(s)$ are localized to unit frequency.
To this end, since the others are similarly, we only estimate
\begin{equation}  \label{d-claim1}
\left\|\int_{\R}  K(t-s) G(s)d s \right\|_{L^q_tL^{r}_x(\mathbb{R}\times \R^2)}\lesssim \|G\|_{L^{q'}_tL^{r'}_x(\R\times\R^2)},
\end{equation}
and
\begin{equation}  \label{d-claim2}
\left\|\int_{\R} \dot K(t-s) H(s)d s \right\|_{L^q_tL^{r}_x(\mathbb{R}\times \R^2)}\lesssim \|H\|_{L^{q'}_tL^{r'}_x(\R\times\R^2)} .
\end{equation}
By interpolating \eqref{mic-dispersive} ($j=0$) with $L^2$-estimate and the Hardy-Littlewood-Sobolev inequality, we get \eqref{d-claim1}.
For the proof of \eqref{d-claim2}, we refer to \cite[Page 14-15]{BFM}, since the argument is completely analogous, and omit further details.

\end{proof}

\subsection{Local smoothing for wave associated with $\LL_{{\A},a}$}

In view to apply a perturbation argument for the proof of Theorem \ref{thm:Stri}, we need to prove some suitable local smoothing estimates.
\begin{proposition}\label{prop:loc}
Let $a\in W^{1,\infty}(\mathbb{S}^{1},\mathbb{R})$, ${\A}\in
W^{1,\infty}(\mathbb{S}^{1},\mathbb{R}^2)$, and assume \eqref{eq:transversal}, \eqref{equ:condassa}.
Let $L_{{\A},a}$ be the spherical operator in \eqref{L-angle}, with first eigenvalue $\mu_1({\A},a)$ as in \eqref{eig-Aa}, and denote by $\nu_0:=\sqrt{\mu_1({\A},a)}$. Then there exists a constant $C>0$ such that, for any $f\in\dot H^{\beta-\frac12}_{{\A},a}$,
\begin{equation}\label{local-s}
\begin{split}
\|r^{-\beta}e^{it\sqrt{\LL_{{\A},a}}}f\|_{L^2_t(\R;L^2(\R^2))}\leq
C \|f\|_{\dot H^{\beta-\frac12}_{{\A},a}},
\end{split}
\end{equation}
for any $\beta\in\left(\frac12,1+\nu_0\right)$.
\end{proposition}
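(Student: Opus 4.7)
The strategy is to diagonalize $e^{it\sqrt{\mathcal{L}_{\mathbf{A},a}}}$ via the spherical harmonics/Hankel transform decomposition already developed in Section~2, and then reduce the estimate to a Weber–Schafheitlin integral. Concretely, expand
$$
f(r,\theta)=\sum_{k\geq 1} c_k(r)\,\psi_k(\theta),\qquad b_k(\rho)=(\mathcal{H}_{\nu_k}c_k)(\rho),\qquad \nu_k=\sqrt{\mu_k(\mathbf{A},a)},
$$
so that by the functional calculus \eqref{funct}
$$
e^{it\sqrt{\mathcal{L}_{\mathbf{A},a}}}f(r,\theta)=\sum_{k\geq 1}\psi_k(\theta)\,u_k(t,r),\qquad u_k(t,r)=\int_0^\infty e^{it\rho}J_{\nu_k}(r\rho)\,b_k(\rho)\,\rho\,d\rho.
$$

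The next step is to compute the left–hand side of \eqref{local-s} directly. Using orthonormality of $\{\psi_k\}$ on $\mathbb S^1$ and Plancherel in the $t$–variable (viewing $u_k(\cdot,r)$ as the Fourier transform of $\mathbbm 1_{(0,\infty)}(\rho)\,J_{\nu_k}(r\rho)\,b_k(\rho)\,\rho$), I would obtain
$$
\bigl\|r^{-\beta}e^{it\sqrt{\mathcal{L}_{\mathbf{A},a}}}f\bigr\|_{L^2_tL^2_x}^2
=2\pi\sum_{k\geq 1}\int_0^\infty|b_k(\rho)|^2\rho^2\Bigl(\int_0^\infty r^{1-2\beta}|J_{\nu_k}(r\rho)|^2\,dr\Bigr)d\rho.
$$
The inner integral is scale–invariant: substituting $s=r\rho$ gives
$$
\int_0^\infty r^{1-2\beta}|J_{\nu_k}(r\rho)|^2\,dr=\rho^{2\beta-2}\,C_{\nu_k,\beta},\qquad C_{\nu_k,\beta}:=\int_0^\infty s^{1-2\beta}J_{\nu_k}(s)^2\,ds.
$$
Applying the classical Weber–Schafheitlin formula,
$$
C_{\nu_k,\beta}=\frac{\Gamma(2\beta-1)\,\Gamma(\nu_k+1-\beta)}{2^{2\beta-1}\,\Gamma(\beta)^2\,\Gamma(\nu_k+\beta)},
$$
which is finite precisely under $\beta>\tfrac12$ (convergence at $\infty$, using $J_{\nu_k}(s)^2\sim s^{-1}$) and $\beta<\nu_k+1$ (convergence at $0$, using $J_{\nu_k}(s)\sim s^{\nu_k}$). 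Since $\nu_k\geq\nu_0$ and $1-2\beta<0$, Stirling's formula yields $\Gamma(\nu_k+1-\beta)/\Gamma(\nu_k+\beta)\lesssim\nu_k^{1-2\beta}\leq\nu_0^{1-2\beta}$, so $\sup_k C_{\nu_k,\beta}<+\infty$ exactly in the range $\beta\in(\tfrac12,1+\nu_0)$.

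Putting these pieces together,
$$
\bigl\|r^{-\beta}e^{it\sqrt{\mathcal{L}_{\mathbf{A},a}}}f\bigr\|_{L^2_tL^2_x}^2\lesssim \sum_{k\geq 1}\int_0^\infty \rho^{2\beta}|b_k(\rho)|^2\,d\rho.
$$
Since each angular sector is mapped by $\mathcal{L}_{\mathbf{A},a}$ to multiplication by $\rho^2$ on the Hankel side, the Plancherel identity for $\mathcal{H}_{\nu_k}$ identifies the right–hand side with
$$
\sum_{k\geq 1}\int_0^\infty\rho^{2(\beta-\frac12)}|b_k(\rho)|^2\,\rho\,d\rho=\|\mathcal{L}_{\mathbf{A},a}^{(\beta-1/2)/2}f\|_{L^2}^2=\|f\|_{\dot H^{\beta-1/2}_{\mathbf{A},a}}^2,
$$
which closes the estimate.

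\textbf{Main obstacle.} The analytical content concentrates entirely in verifying the uniform bound $\sup_{k\geq 1}C_{\nu_k,\beta}<\infty$, which is where the hypothesis $\beta\in(\tfrac12,1+\nu_0)$ enters in a sharp way: at $k=1$ the integral diverges at the origin if $\beta\geq 1+\nu_0$, while at infinity Stirling gives the bound only for $\beta>1/2$. Once that is in place, everything else is the spectral decomposition already at hand, together with Plancherel in $t$ and a scaling change of variables.
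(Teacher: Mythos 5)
Your proof is correct and reaches the conclusion by a genuinely different route from the paper's. Both arguments share the initial reduction: expanding $f$ in the angular eigenbasis $\{\psi_k\}$ via the functional calculus \eqref{funct}, applying Plancherel in $t$ together with the orthonormality of the $\psi_k$, one reduces \eqref{local-s} to controlling
\[
\sum_k\int_0^\infty|b_k(\rho)|^2\rho^2\Bigl(\int_0^\infty r^{1-2\beta}J_{\nu_k}(r\rho)^2\,dr\Bigr)d\rho
\]
by $\|f\|^2_{\dot H^{\beta-1/2}_{\mathbf{A},a}}$, with the inner $r$-integral equal to $\rho^{2\beta-2}C_{\nu_k,\beta}$ after scaling. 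From there the two proofs diverge. The paper performs a double dyadic decomposition in $r$ and $\rho$, reducing to the piece $Q_k(R,M)$ in \eqref{def:Q}, and bounds it using only the two crude Bessel estimates of Lemma \ref{lem: J} (the pointwise bound $|J_\nu(r)|\lesssim r^\nu/(2^\nu\Gamma(\nu+\tfrac12))$ and the unit-scale $L^2$ bound $\int_R^{2R}J_\nu^2\,dr\lesssim1$), so that the endpoints $\beta>\tfrac12$ and $\beta<1+\nu_0$ appear transparently as convergence conditions for the geometric series in $R$ over $R\gg1$ and $R\lesssim1$ respectively. You instead evaluate $C_{\nu_k,\beta}$ in closed form by the Weber--Schafheitlin formula and read off the same range of $\beta$ from the convergence conditions $\tfrac12<\beta<\nu_k+1$ and the decay $\Gamma(\nu+1-\beta)/\Gamma(\nu+\beta)\sim\nu^{1-2\beta}$ as $\nu\to\infty$; since $C_{\nu,\beta}$ is continuous on $(\beta-1,\infty)$ and vanishes at infinity, $\sup_kC_{\nu_k,\beta}<\infty$ holds precisely when $\nu_0>\beta-1$, which is the sharp threshold. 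Your route is shorter and yields the sharp constant, at the cost of invoking the equal-order, equal-argument Weber--Schafheitlin integral; the paper's dyadic argument uses only upper bounds on the Bessel functions and is in that sense more elementary and more robust. Both proofs then close the estimate by identifying $\sum_k\int\rho^{2(\beta-1/2)}|b_k|^2\rho\,d\rho$ with $\|f\|^2_{\dot H^{\beta-1/2}_{\mathbf{A},a}}$ through the Hankel isometry of Lemma \ref{Lem:Hankel}.
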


\begin{remark}\label{rem:loc-s}
The first endpoint $\beta=\frac12$ in \eqref{local-s} is known to be false, even in the free case ${\A}\equiv a\equiv0$, by the usual Agmon-H\"ormander Theory (see e.g. \cite{IS} and the references therein). As for the second endpoint $\beta=1+\nu_0$, this equals 1, in the free case. In the perturbed case, thanks to assumption \eqref{equ:condassa}, we have $\mu_1({\A},a)>0$, hence $\nu_0>0$ is well defined and we get an improvement in the range of validity of the estimate. This fact has been already observed in several papers, for different evolution models (see e.g. \cite{CF,CF1, FFFP2, FGK, GK, MZZ2}). In addition, a further improvement occurs for higher frequencies. Indeed, if $f(x)$, belongs to $\bigoplus_{\nu>k} \mathcal{H}^{\nu}\cap\dot H^{\beta-\frac12}_{{\A},a}(\R^2)$ where $k>\nu_0$, then one
can relax the upper restriction on $\beta$ to  $\beta<1+k$.

\end{remark}

\begin{proof}

Suppose that
\begin{equation}
f(x)
=\sum_{k=0}^\infty a_{k}(r)\psi_{k}(\theta), \qquad b_k(\rho)=(\mathcal{H}_{\nu}a_{k})(\rho).
\end{equation}
We want to estimate
\begin{equation}
\begin{split}
e^{it\sqrt{\LL_{{\A},a}}}f&=\sum_{k=0}^\infty \psi_{k}(\theta)\int_0^\infty J_{\nu_k}(r\rho)e^{
it\rho}b_{k}(\rho)\, \rho d\rho,\quad \nu_k=\sqrt{\mu_k}.
\end{split}
\end{equation}
By the Plancherel theorem with respect to time $t$, it suffices to estimate the term
\begin{equation*}
\begin{split}
&\int_{\R^2}\int_0^\infty\big|\sum_{k=0}^\infty \psi_{k}(\theta)J_{\nu_k}(r\rho)
b_{k}(\rho)\rho \big|^2d\rho |x|^{-2\beta}dx\\
=&\sum_{k=0}^\infty\int_0^\infty\int_0^\infty\big|J_{\nu_k}(r\rho)b_{k}(\rho)
\rho \big|^2 d\rho\, r^{1-2\beta}dr.
\end{split}
\end{equation*}
Let $\chi$ be a smoothing function supported in $[1,2]$, we make dyadic decompositions to obtain
\begin{equation}\label{scal-reduce}
\begin{split}
&\sum_{k=0}^\infty\int_0^\infty\int_0^\infty\big|J_{\nu_k}(r\rho)b_{k}(\rho)
\rho \big|^2 d\rho\, r^{1-2\beta}dr\\
\lesssim&
\sum_{k=0}^\infty\sum_{M\in2^{\Z}}\sum_{R\in2^{\Z}}M^{1+2\beta}R^{1-2\beta}\int_{R}^{2R}
\int_{0}^\infty\big|J_{\nu_k}(r\rho)b_{k}(M\rho)\chi(\rho)
\big|^2 d\rho\, dr.
\end{split}
\end{equation}
Let
\begin{equation}\label{def:Q}
\begin{split}
Q_{k}(R,M)=\int_{R}^{2R}\int_{0}^\infty\big|J_{\nu_k}(r\rho)b_{k}(M\rho)\chi(\rho)
\big|^2 d\rho\,  dr.\end{split}
\end{equation}
We now claim that the following inequality holds:
\begin{equation}\label{est:Q}
Q_{k}(R,M) \lesssim
\begin{cases}
R^{2\nu_k+1}M^{-2}\|b_{k}(\rho)\chi(\frac{\rho}M)\rho^{1/2}\|^2_{L^2},~
R\lesssim 1;\\
M^{-2}\|b_{k}(\rho)\chi(\frac{\rho}M)\rho^{1/2}\|^2_{L^2},~
R\gg1.
\end{cases}
\end{equation}
\begin{proof}[Proof of \eqref{est:Q}]
We consider two different cases.

{\bf $\bullet$ Case 1: $R\lesssim1$.} Since $\rho\sim1$, thus
$r\rho\lesssim1$. By
\eqref{bessel-r}, we obtain
\begin{equation*}
\begin{split}
Q_{k}(R,M)&\lesssim\int_{R}^{2R}\int_{0}^\infty\Big| \frac{
(r\rho)^{\nu}}{2^{\nu}\Gamma(\nu+\frac12)\Gamma(\frac12)}b_{k}(M\rho)\chi(\rho)\Big|^2 d\rho dr\\& \lesssim
R^{2\nu+1}M^{-2}\|b_{k}(\rho)\chi(\frac{\rho}M)\rho^{1/2}\|^2_{L^2}.
\end{split}
\end{equation*}

{\bf $\bullet$ Case 2: $R\gg1$.} Since $\rho\sim1$, thus $r\rho\gg 1$. We
estimate by \eqref{est:b} in Lemma \ref{lem: J}
\begin{equation*}
\begin{split}
Q_{k}(R,M)&\lesssim
\int_{0}^\infty\big|b_{k}(M\rho)\chi(\rho)\big|^2\int_{R}^{2R}\big|J_{\nu}(
r\rho)\big|^2  dr d\rho\\& \lesssim \int_{0}^\infty\big|b_{k}(M\rho)\chi(\rho)\big|^2 d\rho\lesssim
M^{-2}\|b_{k}(\rho)\chi(\frac{\rho}M)\rho^{1/2}\|^2_{L^2}.
\end{split}
\end{equation*}
This concludes the proof of \eqref{est:Q}.
\end{proof}

With \eqref{est:Q} in hand, we can now estimate
\begin{equation*}
\begin{split}
&\sum_{k=0}^\infty\sum_{M\in2^{\Z}}\sum_{R\in2^{\Z}}M^{1+2\beta}R^{1-2\beta}\int_{R}^{2R}
\int_{0}^\infty\big|J_{\nu_k}(r\rho)b_{k}(M\rho)\chi(\rho)
\big|^2 d\rho\, dr\\\lesssim&
\sum_{k=0}^\infty\sum_{M\in2^{\Z}}\sum_{R\in2^{\Z}}M^{1+2\beta}R^{1-2\beta}Q_{k}(R,M)\\\lesssim&
\sum_{k=0}^\infty\sum_{M\in2^{\Z}}\Big(\sum_{R\in2^{\Z}, R\lesssim 1}M^{1+2\beta}R^{1-2\beta}R^{2\nu_k+1}M^{-2}
+\sum_{R\in2^{\Z}, R\gg 1}M^{1+2\beta}R^{1-2\beta} M^{-2}\Big)\big\|b_{k}(\rho)\chi\big(\tfrac{\rho}M\big)\rho^{\frac12}\big\|^2_{L^2}
\\\lesssim&
\sum_{k=0}^\infty\sum_{M\in2^{\Z}}\Big(\sum_{R\in2^{\Z}, R\lesssim 1}M^{2\beta-1}R^{2(1+\nu_k-\beta)}
+\sum_{R\in2^{\Z}, R\gg 1}M^{2\beta-1}R^{1-2\beta}\Big)\big\|b_{k}(\rho)\chi\big(\tfrac{\rho}M\big)\rho^{\frac12}\big\|^2_{L^2}.
\end{split}
\end{equation*}
Under the assumption: $\frac12<\beta<1+\nu_0 $, we sum in $R$ to get
\begin{equation}\label{equ:sumrm}
  \sum_{k=0}^\infty\sum_{M\in2^{\Z}}M^{2\beta-1}\big\|b_{k}(\rho)\chi\big(\tfrac{\rho}M\big)\rho^{\frac12}\big
  \|^2_{L^2}=\|f\|^2_{\dot H^{\beta-\frac12}_{{\A},a}(\R^2)}.
\end{equation}
  Indeed,
it follows from \eqref{funct} that
  \begin{align*}
    \LL_{{\A},a}^\frac{s}{2}f(r,\theta)=&\sum_{k=0}^\infty \psi_{k}(\theta) \int_0^\infty \rho^s J_{\nu_k}(r\rho)b_{k}(\rho) \,\rho d\rho
    =\sum_{k=0}^\infty \psi_{k}(\theta) \mathcal{H}_{\nu(k)}\big(\rho^sb_k(\rho)\big)(r).
  \end{align*}
 And so we obtain
  \begin{align*}
   \|f\|^2_{\dot H^s_{{\A},a}(\R^2)}= \big\|\LL_{{\A},a}^\frac{s}{2}f\big\|_{L^2(\R^2)}^2=  & \int_0^\infty \int_0^{2\pi}\Big|\sum_{k=0}^\infty \psi_{k}(\theta) \mathcal{H}_{\nu(k)}\big(\rho^sb_k(\rho)\big)(r)\Big|^2\;d\theta\;r\;dr\\
    =&\sum_{k=0}^\infty  \int_0^\infty \Big|\mathcal{H}_{\nu(k)}\big(\rho^sb_k(\rho)\big)(r)\Big|^2r\;dr\\
    =&\sum_{k=0}^\infty  \int_0^\infty \int_0^{2\pi}\Big|\mathcal{H}_{\nu(k)}\big(\rho^sb_k(\rho)\psi_k(\omega)\big)(r)\Big|^2\;d\theta\;r\;dr\\
    =&\sum_{k=0}^\infty \int_{\R^2}\Big|\mathcal{H}_{\nu(k)}\big(\rho^sb_k(\rho)\psi_k(\omega)\big)(r)\Big|^2\;dx.
\end{align*}
Using Lemma \ref{Lem:Hankel} $(iii)$, we get
\begin{align*}
  \big\|\LL_{{\A},a}^\frac{s}{2}f\big\|_{L^2(\R^2)}^2=  &\sum_{k=0}^\infty \int_{\R^2}\Big|\rho^sb_k(\rho)\psi_k(\omega)\Big|^2\;d\xi
    =\sum_{k=0}^\infty \int_0^\infty \Big|\rho^sb_k(\rho)\Big|^2\rho\;d\rho.
\end{align*}
Applying the unit decomposition, one has
\begin{align*}
  \big\|\LL_{{\A},a}^\frac{s}{2}f\big\|_{L^2(\R^2)}^2=  &\sum_{k=0}^\infty \int_0^\infty \Big|\sum_{M\in2^{\Z}}\chi\big(\tfrac{\rho}{M}\big)\rho^sb_k(\rho)\Big|^2\rho\;d\rho\\
    \simeq&\sum_{k=0}^\infty \sum_{M\in2^{\Z}} \int_0^\infty \Big|\chi\big(\tfrac{\rho}{M}\big)\rho^sb_k(\rho)\Big|^2\rho\;d\rho\\
    \simeq&\sum_{k=0}^\infty \sum_{M\in2^{\Z}} M^s \big\|\chi\big(\tfrac{\rho}{M}\big)b_k(\rho)\rho^\frac12\big\|_{L^2_\rho}^2.
\end{align*}
This implies \eqref{equ:sumrm}, hence we proved \eqref{local-s}, and the proof of \eqref{local-s} is complete.
\end{proof}

\subsection{Conclusion of the proof of Theorem \ref{thm:Stri}}
Let $u$ be the solution of \eqref{wave}, given by \eqref{eq:solution}.
The case $q=+\infty$ in Theorem \ref{thm:Stri} immediately follows by Spectral Theory and the Sobolev embedding in Lemma \ref{cor:sobl2}. Indeed, one has
\begin{equation*}
\begin{split}
\|u(t,z)\|_{L^\infty(\R;L^{r}(\R^2))}&\lesssim \|\LL^{\frac s2}_{{\A},a} u(t,x)\|_{L^\infty(\R;L^{2}(\R^2))}\\&\lesssim \|f\|_{\dot H^{s}_{{\A},a}(\R^2)}+\|g\|_{\dot H^{s-1}_{{\A},a}(\R^2)}\end{split}
\end{equation*}
where $s=1-\tfrac2r$ and $2\leq r<+\infty$.

Now, let $v$ be the purely magnetic wave
$$
v(t,\cdot):=\cos(t\sqrt{\mathcal L_{{\A},0}})f(\cdot)+\frac{\sin(t\sqrt{\mathcal L_{{\A},0}})}{(\sqrt{\mathcal L_{{\A},0}})}g(\cdot).
$$
By the Duhamel formula, we can hence write
\begin{equation}\label{equ:duhamel}
u(t,\cdot)=v(t,\cdot)-\int_0^t\frac{\sin{(t-\tau)\sqrt{\LL_{{\A},0}}}}
{\sqrt{\LL_{{\A},0}}}\big(\tfrac{a(\hat{x})}{|x|^2}u(\tau,\cdot)\big)\,d\tau.
\end{equation}
By \eqref{claim}, it follows that
$$
\|v(t,x)\|_{L^q(\R;L^{r}(\R^2))}\leq C\left(\|f\|_{\dot H^s_{{\A},0}}+\|g\|_{\dot H^{s-1}_{{\A},0}}\right),
$$
for $s\in\R$, any wave-admissible pair $(q,r)\in\Lambda_s^W$ as in \eqref{adm-p-w}, and for some $C>0$ independent on $f,g$.
Therefore we get
\begin{align}\label{eq:new1}
&
\|u(t,x)\|_{L^q(\R;L^{r}(\R^2))}
\\
\leq& C\left(\|f\|_{\dot H^s_{{\A},0}}+\|g\|_{\dot H^{s-1}_{{\A},0}}\right)
+\Big\|\int_0^t\frac{\sin{(t-\tau)\sqrt{\LL_{{\A},0}}}}
{\sqrt{\LL_{{\A},0}}}\big(\tfrac{a(\hat{x})}{|x|^2}u(\tau,x)\big)d\tau\Big\|_{_{L^q(\R;L^{r}(\R^2))}}
\nonumber
\end{align}
Now our main task is to estimate the $TT^*$-term
\begin{equation}\label{est:inh}
\begin{split}
\Big\|\int_0^t\frac{\sin{(t-\tau)\sqrt{\LL_{{\A},0}}}}
{\sqrt{\LL_{{\A},0}}}\big(\tfrac{a(\hat{x})}{|x|^2}u(\tau,x)\big)d\tau\Big\|_{_{L^q(\R;L^{r}(\R^2))}}.
\end{split}
\end{equation}
Notice that if the set $\Lambda_s^W$ is not empty, we must have $0\leq s<1$. And when $s=0$, we must have $(q,r)=(+\infty,2)$. Hence we only need to study the range $0<s<1$. We will treat separately the following two cases:
\begin{enumerate}
  \item[(i)] $0<s<\min\big\{1,\tfrac12+\nu_0\big\}$,
  \item[(ii)]  $\tfrac12+\nu_0\leq s<1$ with $\nu_0<\tfrac12$.
\end{enumerate}

{\bf Case 1: $0<s<\min\big\{1,\tfrac12+\nu_0\big\}.$}
Define the operator
$$T: L^2(\R^2)\to L^2(\R;L^2(\R^2)), \quad T f= r^{-\beta}e^{it\sqrt{\LL_{{\A},0}}}\LL_{{\A},0}^{\frac12(\frac12-\beta)} f.$$
Thus from the proof of the local smoothing estimate, it follows that $T$ is a bounded operator.
By duality, we obtain that for its adjoint $T^*$ $$T^*: L^2(\R;L^2(\R^2))\to L^2, \quad T^* F=\int_{\tau\in\R}\LL_{{\A},0}^{\frac12(\frac12-\beta)} e^{-i\tau\sqrt{\LL_{{\A},0}}}  r^{-\beta}  F(\tau)d\tau$$
is also bounded. Define the operator
$$B: L^2(\R;L^2(X))\to L^q(\R;L^r(\R^2)), \quad B F=\int_{\tau\in\R} \frac{e^{i(t-\tau)\sqrt{\LL_{{\A},0}}}}{\sqrt{\LL_{{\A},0}}} r^{-\beta}F(\tau)d\tau.$$
Hence by the Strichartz estimate  \eqref{claim} with $s=\frac32-\beta$, one has
\begin{equation}\label{BF}
\begin{split}
&\|B F\|_{L^q(\R;L^r(\R^2))}\\=&\Big\| e^{i t\sqrt{\LL_{{\A},0}}}\int_{\tau\in\R}\frac{e^{-i\tau\sqrt{\LL_{{\A},0}}}}{\sqrt{\LL_{{\A},0}}} r^{-\beta} F(\tau)d\tau\Big\|_{L^q(\R;L^r(\R^2))}\\
\lesssim& \Big\|\int_{\tau\in\R}\frac{e^{-i\tau\sqrt{\LL_{{\A},0}}}}{\sqrt{\LL_{{\A},0}}} r^{-\beta} F(\tau)d\tau\Big\|_{\dot {H}_{{\A},0}^{\frac32-\beta}(\R^2)}=\|T^*F\|_{L^2}\lesssim \|F\|_{L^2(\R;L^2(\R^2))}.
\end{split}
\end{equation}
Now we estimate \eqref{est:inh}. Note that
$$\sin(t-\tau)\sqrt{\LL_{{\A},0}}=\frac{1}{2i}\big(e^{i(t-\tau)\sqrt{\LL_{{\A},0}}}-e^{-i(t-\tau)\sqrt{\LL_{{\A},0}}}\big),$$
thus by \eqref{BF}, we have a minor modification of \eqref{est:inh}
\begin{equation*}
\begin{split}
&\Big\|\int_\R\frac{\sin{(t-\tau)\sqrt{\LL_{{\A},0}}}}
{\sqrt{\LL_{{\A},0}}}\big(\tfrac{a(\hat{x})}{|x|^2}u(\tau,x)\big)d\tau\Big\|_{L^q(\R;L^{r}(\R^2))}\\\lesssim & \big\|B\big(r^{\beta}\tfrac{a(\hat{x})}{|x|^2}u(\tau,x)\big)\big\|_{L^q(\R;L^r(\R^2))}\\
\lesssim& \|r^{\beta-2}u(\tau,x)\|_{L^2(\R;L^2(\R^2))}
\\\lesssim& \|f\|_{\dot H^{\frac32-\beta}_{{\A},a}(\R^2)}+\|g\|_{\dot H^{\frac12-\beta}_{{\A},a}(\R^2)}
\end{split}
\end{equation*}
where we use the local smoothing estimate in Proposition \ref{prop:loc} again in the last inequality and we need $1-\nu_0<\beta<3/2$ such that $1/2<2-\beta<1+\nu_0$.
Therefore the above statement holds for all $\max\{1/2,1-\nu_0\}<\beta<3/2$.
By the Christ-Kiselev lemma \cite{CK}, thus we have showed that for $q>4$ and $(q,r)\in\Lambda^W_s$ with $s=\frac32-\beta$
\begin{equation}
\begin{split}
\eqref{est:inh}\lesssim \|f\|_{\dot H^{s}_{{\A},a}(\R^2)}+\|g\|_{\dot H^{s-1}_{{\A},a}(\R^2)}.
\end{split}
\end{equation}
Therefore we have proved all  $(q,r)\in\Lambda^W_s$ when $s$ satisfies $0<s<\min\big\{1,\tfrac12+\nu_0\big\}$.
\vskip 0.2in

{\bf Case 2}: $\tfrac12+\nu_0\leq s<1$ with $\nu_0<\tfrac12$. To this end, we split the initial data into two parts: one is projected to $\mathcal{H}^k$ with $k\leq 1+\nu_0$ and the other is the remaining terms. Without loss of generalities, we assume $g=0$ and
  divide $f=f_{l}+f_{h}$ where $f_{h}=f-f_{l}$ and
 \begin{equation}
 f_{l}(x)=\sum_{k=0}^{1}a_{k}(r)\psi_k(\theta).
 \end{equation}
 For the part involving $f_{h}$, we can repeat the argument of {\bf Case 1}. In this case, as remarked in Remark \ref{rem:loc-s}, we can use Proposition \ref{prop:loc} with $1/2<2-\beta<2+\nu_0$.
 Thus we obtain the Strichartz estimate on $e^{it\sqrt{\LL_{{\A},a}}}f_{h}$ for $\Lambda_{s}^W$ with $s\in[\frac12+\nu_0,1)$.

 Next we consider the Strichartz estimate on $e^{it\sqrt{\LL_{{\A},a}}}f_{l}$. We follow the argument of \cite{PST} which treated a radial function.
Recall from \eqref{funct}
 \begin{equation}
\begin{split} e^{it\sqrt{\LL_{{\A},a}}}u_{0,l}(x)&=
\sum_{k=0}^1\psi_k(\theta)\int_0^\infty J_{\nu(k)}(r\rho)e^{it\rho}\mathcal{H}_{\nu(k)}(a_k)\rho \;d\rho,\\
&=\sum_{k=0}^1\psi_k(\theta)\mathcal{H}_{\nu(k)}[e^{it\rho}\mathcal{H}_{\nu(k)}(a_k)](r).
\end{split}
\end{equation}
Since $\psi_k(\theta)\in L^r(\mathbb{S}^1),$ we get
 \begin{equation}
\begin{split} \|e^{it\sqrt{\LL_{{\A},a}}}u_{0,l}\|_{L^q(\R;L^r(\R^2))}&\leq C
\sum_{k=0}^1\left\|\mathcal{H}_{\nu(k)}[e^{it\rho}\mathcal{H}_{\nu(k)}(a_k)](r)\right\|_{L^q(\R;L^r_{rdr})}.
\end{split}
\end{equation}
Recall $\mathcal{H}_0\mathcal{H}_0=Id$, then it suffices to estimate
 \begin{equation}
\begin{split}
\sum_{k=0}^1\left\|(\mathcal{H}_{\nu(k)}\mathcal{H}_0)\mathcal{H}_0[e^{it\rho}\mathcal{H}_0(\mathcal{H}_0\mathcal{H}_{\nu(k)})
(a_k)](r)\right\|_{L^q(\R;L^r_{rdr})}.
\end{split}
\end{equation}
For our purpose, we recall \cite[Theorem 3.1]{PST} which claimed that the operator $\mathcal{K}^0_{\mu,\nu}:=\mathcal{H}_\mu\mathcal{H}_\nu$ is continuous on $L^p_{r^{n-1}dr}([0,\infty))$ if
$$\max\big\{\tfrac1n\big(\tfrac{n-2}2-\mu\big),0\big\}<\tfrac1p<\min\big\{\tfrac1n\big(\tfrac{n-2}2+\nu+2\big),1\big\}.$$
Notice $n=2$, we obtain that $\mathcal{K}^0_{0,\nu}$ and $\mathcal{K}^0_{\nu,0}$ are bounded in
$L^p_{rdr}([0,\infty))$ provided $p>2$ and $\nu>0$. On the other hand,
$\mathcal{H}_0[e^{it\rho}\mathcal{H}_0]$ is a classical half-wave propagator in the radial case which has Strichartz estimate with $(q,r)\in\Lambda_s^W$. In sum, for $(q,r)\in\Lambda_{s}^W$, we have
 \begin{equation}
\begin{split}
&\|e^{it\sqrt{\LL_{{\A},a}}}f_{l}\|_{L^q(\R;L^r(\R^2))}\\\leq& C\sum_{k=0}^1\left\|(\mathcal{H}_{\nu(k)}\mathcal{H}_0)\mathcal{H}_0[e^{it\rho}\mathcal{H}_0(\mathcal{H}_0\mathcal{H}_{\nu(k)})
(a_k)](r)\right\|_{L^q(\R;L^r_{rdr})}\\
\leq& C\sum_{k=0}^1\left\|(\mathcal{H}_0\mathcal{H}_{\nu(k)})(a_k)](r)\right\|_{\dot H^s_{{\A},a}}\leq C\left(\sum_{k=0}^1\left\|a_{k}(r)\right\|^2_{\dot H^s_{{\A},a}}\right)^{1/2}\leq C\|f_{l}\|_{\dot H^s_{{\A},a}}.
\end{split}
\end{equation}
In the second inequality, we use \cite[Theorem 3.8]{PST}.
Therefore, we conclude the proof of Theorem \ref{thm:Stri}.

\begin{center}

\end{center}


\begin{thebibliography}{99}
\addcontentsline{toc}{section}{References}

\bibitem{Alex} {\sc G. Alexopoulos}, Spectral multipliers for Markov
chains,  \textit{Journal of the Math. Soc. of Japan} {\bf 56} (2004), 833-852.

\bibitem{AB}
{\sc Y. Aharonov, and D. Bohm}, Significance of electromagnetic potentials in quantum theory,
\textit{Phys. Rev. Lett.} {\bf 115} (1959), 485--491.


\bibitem{BFM} {\sc M. D. Blair, G. A. Ford, and J. L. Marzuola}, Strichartz estimates for the wave equation on flat cones, \textit{IMRN}, 2012, 30 pages, doi:10.1093/imrn/rns002.

\bibitem{BPSS}
{\sc N. Burq, F. Planchon, J. Stalker, and A. S.
Tahvildar-Zadeh,} Strichartz estimates for the wave and Schr\"odinger
equations with the inverse-square potential, \textit{J. Funct. Anal.} {\bf 203}
(2003), 519-549.

\bibitem{BPST}
{\sc N. Burq, F. Planchon, J. G. Stalker, and A. S. Tahvildar-Zadeh},
Strichartz estimates for the wave and Schr\"odinger equations with potentials of critical decay,
\textit{Indiana Univ. Math. J.}, {\bf 53} (2004), 1665--1680.

\bibitem{CD} T. Coulhon and X. Duong, Riesz transformation for $1\leq p\leq 2.$ Tran. AMS, 351(1999),  1151-1169.

\bibitem{CF}
{\sc F. Cacciafesta, and L. Fanelli},
Dispersive estimates for the Dirac equation in an Aharonov–Bohm field,
\textit{Journal of Differential Equations} {\bf 263} (2017), 4382--4399.

\bibitem{CF1}
{\sc F. Cacciafesta, and L. Fanelli}, Weak dispersive estimates for fractional Aharonov-Bohm-Schr\"odinger groups, \textit{Dynamics of PDE}, {\bf 10} (2013), 379--392.


\bibitem{CK}
{\sc M. Christ, and A. Kiselev},
Maximal functions
associated to filtrations, \textit{J. Funct. Anal.}, {\bf 179} (2001),
409--425.

\bibitem{CS}
    {\sc S. Cuccagna, and Schirmer},
    On the wave equation with a magnetic potential,
    \textit{Comm. Pure Appl. Math.} \textbf{54} (2001), 135--152.

\bibitem{CT1}
 {\sc J. Cheeger, M. Taylor}, Diffraction of waves by Conical Singularities parts I, \textit{Comm.
Pure Appl. Math.} \textbf{35}(1982), 275-331.

\bibitem{CT2}
{\sc J. Cheeger, M. Taylor}, Diffraction of waves by Conical Singularities parts  II, \textit{Comm.
Pure Appl. Math.} \textbf{35}(1982),  487-529.






\bibitem{DF}
{\sc P. D'Ancona, and L. Fanelli},
Decay estimates for the wave and Dirac equations with a magnetic potential,
\textit{Comm. Pure Appl. Math.} {\bf 60} (2007), 357--392.

\bibitem{DFVV}
{\sc P. D'Ancona, L. Fanelli, L. Vega, and N. Visciglia},
Endpoint Strichartz estimates for the magnetic Schr\"odinger equation,
\textit{J. Funct. Anal.}, {\bf 258} (2010), 3227--3240.
%
%
%
%

\bibitem{EGS1}
{\sc M.B. Erdogan, M. Goldberg and W. Schlag},
Strichartz and Smoothing Estimates for Schr\"odinger
Operators with Almost Critical Magnetic Potentials in Three and Higher
Dimensions,
\textit{Forum Math.} {\bf 21} (2009), 687--722.

\bibitem{EGS2}
{\sc M.B. Erdogan, M. Goldberg and W. Schlag},
Strichartz and smoothing estimates for Schr\"odinger operators with
large magnetic potentials in $\R^3$, \textit{J. European Math.
Soc.} {\bf 10} (2008), 507--531.



\bibitem{FFFP1}
{\sc L. Fanelli, V. Felli, M. A. Fontelos, and A. Primo}, Time decay of scaling critical electromagnetic Schr\"odinger flows,
\textit{Comm. Math. Phys.}, {\bf 324} (2013), 1033--1067.

\bibitem{FFFP}
{\sc L. Fanelli, V. Felli, M. A. Fontelos, and A. Primo}, Time decay of scaling invariant electromagnetic Schr\"odinger equations on the plane,
\textit{Comm. Math. Phys.} {\bf 337} (2015), 1515--1533.

\bibitem{FFFP2}
{\sc L. Fanelli, V. Felli, M. A. Fontelos, and A. Primo},
Frequency-dependent time decay of Schr\"odinger flows, \textit{J. Spectral Theory} {\bf 8} (2018), 509--521.

\bibitem{FGK}
{\sc L. Fanelli, G. Grillo, and H. Kovarik}, Improved time-decay for a class of scaling-critical Schr\"odinger flows, \textit{J. Func. Anal.} {\bf 269} (2015), 3336--3346.


\bibitem{FFT} {\sc V. Felli, A. Ferrero, S. Terracini},  Asymptotic behavior of solutions to Schr\"odinger equations near an
isolated singularity of the electromagnetic potential. J. Eur. Math. Soc. 13(2011), 119-174.


\bibitem{FG}
{\sc L. Fanelli and A. Garc\'ia}, Counterexamples to Strichartz estimates for the magnetic
Schr\"odinger equation, \textit{Comm. Cont. Math.} {\bf 13} (2011), 213--234.

\bibitem{Grigo} {\sc A. Grigoryan,}  Upper bounds of derivatives of the heat kernel on an arbitrary complete
manifold. Journal of Functional Analysis, 127(1995), 363–389.


\bibitem{GK}
{\sc G. Grillo, and H. Kovarik}, Weighted dispersive estimates for two-dimensional Schr\"odinger operators with Aharonov-Bohm magnetic field, \textit{J. Differential Equations} {\bf 256} (2014), 3889--3911.



\bibitem{IO} {\sc R. Imekraz and E. M. Ouhabaz}, Bernstein inequalities via the heat semigroup, arXiv:1910.01326.


\bibitem{IS}
{\sc A. Ionescu, and W. Schlag}, Agmon-Kato-Kuroda theorems for a large class of perturbations
\textit{Duke Math. J.}
{\bf 131} (2006), 397--440


\bibitem{K}
  {\sc T. Kato}, \textit{Perturbation theory for linear operators}, Springer-Verlag,
  Berlin, 1966.

\bibitem{KT}
{\sc M. Keel and T. Tao}, Endpoint Strichartz estimates,
\textit{Amer. J. Math.}, {\bf 120} (1998), 955-980.

\bibitem{Ko} {\sc H. Kova $\breve{r}\acute{i}$k}, Heat kernels of two-dimensional magnetic Schr\"odinger and Pauli operators, \textit{Calc. Var. Partial Differential Equations}, 
{\bf 44} (2012) 351–374.

\bibitem{KMVZZ}
{\sc R. Killip, C. Miao, M. Visan, J. Zhang, and J. Zheng},
Sobolev spaces adapted to the Schrödinger operator with inverse-square potential, \textit{Math. Z. } {\bf 288} (2018), 1273-1298.

\bibitem{LW}
{\sc A. Laptev, and T. Weidl}, Hardy inequalities for
      magnetic Dirichlet forms, \textit{Mathematical results in quantum
    mechanics (Prague, 1998)}, 299--305; \textit{Oper. Theory Adv. Appl.} {\bf 108},
    Birkh\"auser, Basel, 1999.
    
\bibitem{MOR}    {\sc M. Melgaard, E. Ouhabaz, G. Rozenblum}, Negative discrete spectrum of perturbed multivortex Aharonov-Bohm Hamiltonians, 
\textit{Ann. H. Poincar\'e} {\bf 5} (2004), 979-1012.

\bibitem{MZZ}
{\sc C. Miao, J. Zhang and J. Zheng}, A note on the cone restriction conjecture,
\textit{Proc. Amer. Math. Soc.}, {\bf 140} (2012),
2091--2102.

\bibitem{M}
  {\sc H. Mizutani}, Remarks on endpoint Strichartz estimates for Schr\"odinger equations with the critical inverse-square potential, \textit{J. Diff. Eq.} {\bf 263} (2017), 3832--3853.

\bibitem{MZZ2}
  {\sc H. Mizutani, J. Zhang, and J. Zheng}, Uniform resolvent estimates for Schr\"odinger operator with an inverse-square potential, \textit{J. Func. Anal.} {\bf 278}(2020), 108350.

\bibitem{RS}
  {\sc M. Reed, and B. Simon,}, \textit{Methods of modern mathematical physics. II. Fourier analysis, self-adjointness}. Academic Press, New York-London, 1975.

%

\bibitem{PT89}
{\sc M. Peshkin, and A. Tonomura}. \textit{The Aharonov-Bohm Effect.} Lect. Notes Phys. {\bf 340} (1989).


\bibitem{PST}
{\sc F. Planchon, J. Stalker and A. S. Tahvildar-Zadeh}, $L^p$ estimates for the wave equation with the inverse-square potential,
\textit{Discrete Contin. Dynam. Systems} {\bf 9} (2003), 427--442.

%
%

\bibitem{S}
{\sc W. Schlag}. \textit{Dispersive estimates for Schr\"odinger operators: a survey.} Mathematical aspects of nonlinear dispersive equations, 255--285, Ann. of Math. Stud. 163, Princeton Univ. Press, Princeton, NJ, 2007.



\bibitem{SS} J. Shatah and M. Struwe, Geometric wave equations, Courant Institute of Mathemtaical sciences, New York University, 1998.


%
%
%

\bibitem{Stein} {\sc E. M. Stein}, \textit{Singular Integrals and Differentiability Properties of Functions},
Princeton University Press, Princeton (1970).

\bibitem{Taylor}
{\sc M. Taylor}, \textit{Partial Differential Equations, Vol II},
Springer, 1996.

\bibitem{Watson}  {\sc G. N. Watson}, \textit{A Treatise on the Theory of Bessel Functions. Second Edition}, Cambridge
University Press, 1944.

\bibitem{Zhang}
{\sc J. Zhang}, Strichartz estimates and nonlinear wave equation on nontrapping asymptotically conic manifolds, \textit{Advances in Math.}, 271(2015), 91-111.


\end{thebibliography}
\end{document}